\providecommand{\U}[1]{\protect\rule{.1in}{.1in}}
\newtheorem{theorem}{Result}
\newenvironment{proof}[1][Proof]{\noindent \textbf{#1.} }{\  \rule{0.5em}{0.5em}}
\begin{document}
	
	\title{\textbf{Robust inference for an interval-monitored step-stress experiment under proportional hazards  }}
	\author{Narayanaswamy Balakrishnan,  María Jaenada and Leandro Pardo}
	\date{ }
	\maketitle
	
	\begin{abstract}
		
		Accelerated life tests (ALTs) play a crucial role in reliability analyses, providing lifetime estimates of highly reliable products under normal conditions. Among the different types of ALTs, the step-stress design incrementally increases the stress level at predefined times, while maintaining a constant stress level between successive changes. This approach accelerates the occurrence of failures, effectively reducing experimental duration and cost.
		While many studies focusing on ALTs assume a specific form for the lifetime distribution; in certain applications 
		instead a general form satisfying certain properties, 
		such as the proportional hazards requirement, should be preferred. In particular, the proportional hazard model assumes that applied stresses act multiplicatively on the hazard rate, and thus the hazards function under increased stress may be divided into two factors, with one representing the effect of the stress, and the other representing the baseline hazard.
		In this work, we examine particular forms of baseline hazards, namely, linear and quadratic forms. Moreover, certain experiments may face practical constraints, making continuous monitoring of devices infeasible. Instead, devices under test are inspected at predetermined intervals, and resulting data are then grouped as counts of failures, leading to interval-censoring.
		On the other hand, recent works have shown an appealing trade-off between the efficiency and robustness of divergence-based estimators in parametric inference under interval-censored data. This paper introduces the step-stress ALT model under proportional hazards and presents a robust family of minimum density power divergence estimators (MDPDEs)  for estimating device reliability and related lifetime characteristics such as mean lifetime and distributional quantiles. The MDPDEs of related lifetime characteristics and their asymptotic distributions are derived, providing approximate confidence intervals for the parameters of interest.
		Empirical evaluations through Monte Carlo simulations demonstrate their performance in terms of robustness and efficiency. Finrally, an illustrative example is provided to demonstrate the usefulness of the model and the associated method developed here.

	\end{abstract}
	
	\section{Introduction \label{sec:Introduction}} 
	
	Life testing analysis is a major concern in reliability, with applications in diverse fields including engineering and biomedical sciences.
	Moreover,  driven by customer expectations, nowadays products are designed to be highly reliable with substantially long
	lifetimes. Consequently, life-testing experiments for such highly reliable products  under normal operating conditions (NOC)  would usually  lead to few failures (if at all) in the experiment.
	As a result, to achieve accurate inference under NOC, large experimental times, and consequently, high cost,s are necessitated. 
	To address this limitation, accelerated life-tests (ALTs) reduce the time to failure by exposing the experimental units to stronger test conditions than NOC, causing a greater number of observed failures. The failure data under increased stress levels can be accurately analyzed and subsequently, the results from the reliability analysis conducted under increased stress conditions can be extrapolated to make inferences about the product's performance under NOC.
	In particular, step-stress ALT plans are noteworthy. These plans involve increasing the stress factor at which units are exposed at certain (typically pre-specified) times, thereby subjecting all devices under test to the same stress patterns and heightening the probability of failure. That is,  the stress is increased and held constant between two successive stress levels for all non-failed units.
	 The step-stress ALT design enables efficient data collection and accurate inference with smaller number of devices than  under the popular constant-stress testing, and also would result in shorter experimental times. But,  it necessitates a model relating the relationship between the wear caused by preceding stresses and the current lifetime distribution.
	
	Several known distributions have been explored in the literature to describe the lifetimes of tested products, including the exponential, Weibull, gamma, log-normal or generalized gamma distributions. However, the true underlying distribution for the data is generally unknown and so none of these parametric distributions may be adequate to fit the observed data. 
	Alternatively, \cite{cox1972regression}'s proportional hazards (PH)  model offers a semi-parametric multiple regression approach for reliability estimation, in which the baseline hazard function is multiplicatively affected  by the applied stresses. The PH model	is distribution-free and relies on the assumption that the hazard rate ratio between two stress levels remains constant over time. 
	 This model can be particularly useful when  no expert knowledge may be available to select a specific parametric lifetime distribution family, but certain natural constraints, such as the constant hazard rate ratio, can be made. In this paper, we adopt the PH Cox model with linear and quadratic forms for the baseline hazard function.

	Although step-stress tests are typically conducted with continuous monitoring of unit lifetimes, there are situations where continuous monitoring is not feasible, but units are only inspected for failures at specific time intervals. In such cases,
	the observed data are grouped failure counts between two consecutive inspection times, resulting in interval-censoring.
	This interval-censored set-up may arise in experiments where functional check requires a manual test, or in industrial experiments where cost limitations make it more feasible to intermittently monitor the lifetime of units at specific intervals rather than on a time-continuous manner.
	For example, \cite{balakrishnan2023robust} applied the interval-censored step-stress model for analyzing lifetime data from non-destructive tests with one-shot devices. 
	Under this scenario, all data observed are censored, and so specific inferential methods need to be developed to handle such situations effectively.
	
	In parametric, inference the model parameters are typically estimated  using maximum likelihood estimators (MLEs) due its good asymptotic properties.
	However, this method suffers from lack of robustness and can be significantly affected by outliers present in the data
	The accuracy of the inference procedure profoundly affects the reliability estimates at normal operating conditions and the subsequent
	decisions regarding product configuration, warranties or preventive maintenance schedule. Therefore, contamination in the data can exert a strong negative impact on the results when employing the MLE.
	
	In recent years, considerable efforts have been made towards developing robust methods  for interval-censored ALT, most of them based on divergence measures (see, for example, \cite{balakrishnan2023robust, balakrishnan2023step, balakrishnan2023Non}). In this paper, we extend the robust approach proposed in the above mentioned papers	and develop here robust estimators 
	for interval-monitored ALTs based on	divergence measures under the PH  model.	To best of our knowledge, no research on inference for interval-censored ALTs within the PH model framework, either based on MLE or robust estimators, has been carried out so far.
	
	The rest of the paper is organized as follows:	Section \ref{sec:hazard} revisits the step-stress ALT model under the proportional hazards assumption and introduces two functional forms for the baseline hazard—namely, linear and quadratic baselines.
	In Section \ref{sec:MDPDE}, a family of robust estimators based on density power divergence (DPD) is defined, and explicit expressions of the model for the linear and quadratic forms are obtained. Additionally, the estimating equations and asymptotic distribution of the robust estimators are derived.
	In Section \ref{sec:char}, three of the main lifetime characteristics, mean lifetime, reliability, and distributional quantiles, under both linear and quadratic baseline hazards are estimated,  and associated asymptotic distributions and confidence intervals are obtained by applying the Delta method. The performance of the proposed estimators is then empirically evaluated in terms of accuracy and robustness in Section \ref{sec:simulation}.
	An illustrative example demonstrates the usefulness of the model and the method developed here in  Section \ref{sec:realdata}. Finally, some concluding remarks are made in Section \ref{sec:concluding}.

	\section{Step-stress ATL with proportional hazards \label{sec:hazard}}

	Life testing of highly reliable products poses a challenge, but if a specific environmental factor influencing device reliability is identified, an ALT can be conducted to intentionally induce more failures compared to a NOC.
	When a lifetime distribution is assumed, the functional relationship between the stress level and the devices reliability function can be established in terms of the model parameters. For example, for scale parametric families, the scale parameter at a constant stress $x$, say $\lambda,$ is usually assumed to be log-linearly related to the the stress level of the form
	\begin{equation} \label{eq:loglinear}
		\lambda(x; \boldsymbol{a}) = \log(a_0 + a_1x),
	\end{equation}
	with $\boldsymbol{a}= (a_0, a_1)^T \in \mathbb{R}^2.$
	Proportional hazards (PH) models, first proposed  by \cite{cox1972regression}, facilitate a semi-parametric regression
	approach for reliability estimation, in which the baseline hazard function is
	affected multiplicatively by the  applied stresses. That is, if we denote by $\lambda(t, x)$ the hazard rate function of a product lifetime under a constant stress $x,$ then we can multiplicatively separate the effects of the natural  product reliability and the stress level experienced as 
	\begin{equation}\label{eq:PH}
		h(t, x) = h_0(t) \lambda(x; \boldsymbol{a}),
	\end{equation}
	where $\lambda(x; \boldsymbol{a})$ is the log-linear relation stated in (\ref{eq:loglinear}).
	The PH model assumes that the ratio of hazard rates between two stress levels is constant over time, i.e.,
		\begin{equation*}
		\frac{h(t, x_1)}{h(t, x_2)} =\frac{\lambda(x_1; \boldsymbol{a})}{\lambda(x_2; \boldsymbol{a})}
	\end{equation*}
	for any two stress levels $x_1$ and $x_2.$
	One of the advantages of the PH model is its flexibility to extend it to time-dependent covariates. 
	On the other hand, since the stress level is increased during the experimentation in step-stress ALTs, a model relating the lifetime distribution of a device at a specific (current) and preceding stress levels becomes necessary.
	The natural underlying assumption is that the reliability of devices under test at a given stress level is somehow influenced by the stress levels experienced earlier, causing it to wear and tear. 
	Three main models have been discussed in the literature for a such purpose.
	\cite{degroot1979bayesian} proposed the tampered random variable model and discussed optimal tests under a Bayesian framework.
	\cite{bhattacharyya1989tampered} proposed the tampered	failure rate model which assumes that the effect of change of stress is to multiply the initial failure rate function by a factor subsequent to the stress change time. Finally, \cite{Sedyakin1966}, \cite{Bagdonavicius1978} and \cite{nelson1990accelerated} all adopted the cumulative exposure	model. The natural assumption underlying the cumulative exposure model has led to its widespread use adoption.
	 \cite{elsayed2002optimal} adopted the cumulative exposure model for optimal step-stress ALT under PH, and it is the  approach that is employed here.
	%
	 Of course, the appropriate form of these two models, the first relating the stress level to the failure rate (at constant stress) and the other relating the change on the reliability when increasing the stress will depend on the nature of the product and the assumptions made about its degradation or failure mechanism.

	The cumulative exposure model states that the residual life of an experimental unit depends only on the cumulative exposure it has experienced, with no memory of how this exposure was accumulated.
	That is, the effect of increasing the stress level from  $x_1$ to $x_2$ at time $\tau$ on the  lifetime distribution of the device is mathematically defined by a shift on the lifetime distribution such that  continuity is ensured, i.e.,
	$$F_1(\tau) = F_2(\tau+s),$$
	where $F_i(\cdot)$ denotes the cumulative lifetime distribution (CDF) at constant stress level $x_i$ and  the shifting time $s <0$ represents the accumulated damage until the time of stress change.	Alternately,  the previous condition can be stated in terms of the cumulative hazard function at constant stress level $x_i$, denoted in the following by $H_i(\cdot), i=1,2,$ as follows:
	$$H_1(\tau) = H_2(\tau+s),$$
	where $H_i(t) = -\log(1-F_i(t)), i=1,2,$ and $s$ is the shifting time.
	
	Bringing together the PH and the cumulative exposure assumptions, the shifting time $s$ must satisfy the following relationship
	\begin{equation} \label{eq:CEPH}
		H_1(\tau) = \lambda(x_1, \boldsymbol{a}) \int_0^\tau h_0(t)dt  =  \lambda(x_2, \boldsymbol{a}) \int_0^{\tau+s} h_0(t)dt  = 		H_2(\tau+s).
	\end{equation}
	Note that the hazard rate of the lifetime over the step-stress experiment  may not be continuous at time $\tau,$ and the continuity assumption is inherited only by the cumulative hazard function.
%
	From the above formula, the cumulative hazard function of the device lifetime, $T,$ under a simple step-stress model with stress levels $x_1$ and $x_2,$ is given by
	\begin{equation}
		H_T(t) = \begin{cases}
			H_1(t) = \lambda(x_1,\boldsymbol{a}) \int_{0}^{t} h_0(u)du, & 0 < t < \tau,\\
			H_2(t+s) = \lambda(x_2,\boldsymbol{a}) \int_{0}^{t+s} h_0(u)du,  &  \tau < t < \infty,
		\end{cases}
	\end{equation}
	 and consequently the reliability of the device can be obtained as
	\begin{equation}
		R_T(t) = \begin{cases}
			R_1(t) = \exp(-H_1(t)), & 0 < t < \tau,\\
			R_2(t+s) = \exp(-H_2(t+s)), &  \tau < t < \infty,
		\end{cases}
	\end{equation}
	where the shifting time $s = H_2^{-1}(H_1(\tau)) - \tau$.  
%
	%
	In practical use, the shifting time could be estimated non-parametrically or by assuming a specific form for the baseline hazard. Because the observed data are usually censored, the non-parametric estimator may not be accurate. For this reason, we will assume here linear and quadratic functions for the baseline hazard.

	The assumption of linear and quadratic forms is made here with the hope that the true (but unknown) hazard may be satisfactorily approximated by a polynomial of degree one or two (low degree).
	A reasonable justification is derived from the fact that some hazard functions from known distributions  may be expanded in a converging power series. Therefore, the linear and quadratic forms can be seen as first- and second-order Taylor approximations of any underlying general hazard function.
	Furthermore, in some sense, the use of a polynomial transformation is a non-parametric technique, and consequently may be considered as an alternative to other non-parametric estimates.
	 %
	 %
	
	The assumption of a polynomial hazard rate is equivalent to the assumption of polynomial exponential transformation of the lifetime adopted in \cite{krane1963analysis} and, as pointed out therein, it is suitable for industrial property survival data (see \cite{NARUC, AGAEEI} reports).

	\subsection{Linear hazards}
	
	Let us consider linear form for the baseline hazard as
	\begin{equation} \label{eq:baselinehazardlinear}
		h_0(t) = \gamma_0 + \gamma_1 t, \hspace{0.3cm} t>0,
	\end{equation} 
	where $\boldsymbol{\gamma} = (\gamma_0, \gamma_1)$ are the linear relationship coefficients. 
	Using the same notation as in Section \ref{sec:hazard}, the hazard rate function of a device lifetime subjected to constant stress $x$ under PH is given by
	\begin{equation} \label{eq:hazardlinear}
		h(t, x) = ( \gamma_0 + \gamma_1 t) \exp\left(a_1 x\right),
	\end{equation}
	where $\boldsymbol{a} = (a_0, a_1) \in \mathbb{R}^2$ are the coefficients of the log-linear relationships in  (\ref{eq:loglinear}) relating the stress level to the devices lifetimes.
	However, note that the intercept $a_0$ becomes superfluous in Eq. (\ref{eq:hazardlinear}) under linear baseline assumption, since its multiplicative role can be incorporated into the baseline hazard parameters $\boldsymbol{\gamma}.$ Therefore, for parametric inference on the reliability of the product under a constant stress, we would need to estimate both relationships coefficients in (\ref{eq:hazardlinear}) resulting in a $3$-dimensional model parameter $\boldsymbol{\theta} = (\boldsymbol{\gamma}, a_1).$ 
	
	Now, the corresponding cumulative hazard and reliability functions at constant stress $x$ are given, respectively, by
	\begin{equation*}
			H_{\boldsymbol{\theta}}(t, x) = \left( \gamma_0t + \gamma_1 \frac{t^2}{2}\right) \exp\left(a_1 x\right) 
			\hspace{0.2cm} \text{and} \hspace{0.2cm}
			R_{\boldsymbol{\theta}}(t, x) = \exp\left[-\left( \gamma_0t + \gamma_1 \frac{t^2}{2}\right) \exp\left(a_1 x\right) \right].
	\end{equation*}

	Under the simple step-stress experiment with two stress levels $x_1$ and $x_2,$ the cumulative lifetime hazards at constant stresses are combined to yield the step-stress cumulative hazard function as
	\begin{equation}
		H_{\boldsymbol{\theta}}(t) = \begin{cases}
		 \exp\left( a_1 x_1\right)  \left( \gamma_0t + \gamma_1 \frac{t^2}{2}\right), & 0 < t < \tau,\\
		\exp\left(a_1 x_2\right)  \left( \gamma_0(t+s) + \gamma_1 \frac{(t+s)^2}{2}\right), &  \tau < t < \infty,
		\end{cases}
	\end{equation}
	 where $s$ is the solution of the second-order equation
	\begin{equation}\label{eq:secondorderequation}
   \frac{\gamma_1}{2}(\tau+s)^2 +\gamma_0(\tau+s) - \exp\left( a_1 (x_1-x_2)\right) \left( \gamma_0 \tau + \gamma_1 \frac{\tau^2}{2}\right) = 0.
   \end{equation}

The above equation has at least one real root, since all coefficients are positive and so, the discriminant of the second-order equation is also positive.
Moreover, it has at least one negative solution
and so we will choose the greater negative solution of the above equation to ensure that $s<0$ and $\tau + s >0.$ 

The reliability function of a device under test will be given as
	\begin{equation} \label{eq:reliabilitylinearhazard}
	R_{\boldsymbol{\theta}}(t) = \begin{cases}
		\exp\left(-\exp\left( a_1 x_1\right) \left( \gamma_0t + \gamma_1 \frac{t^2}{2}\right)\right), & 0 < t < \tau,\\
		\exp\left(-\exp\left(a_1 x_2\right) \left( \gamma_0(t+s) + \gamma_1 \frac{(t+s)^2}{2}\right)\right), &  \tau < t < \infty,
	\end{cases}
\end{equation}
where $s$ satisfies the second-order equation in (\ref{eq:secondorderequation}). 

The model parameter should satisfy some natural restrictions. Firstly, because the hazard function is positive at any time $t>0,$ both coefficients $\gamma_0$ and $\gamma_1$ in (\ref{eq:baselinehazardlinear}) must be positive. Moreover, under positive and increasing stress levels, the reliability of the devices should decrease when increasing the stress level. Then, the regression coefficient in (\ref{eq:loglinear}), namely $a_1,$ should also be positive. Summarizing, assuming increasing and positive stress levels, the parameter space now becomes $\Theta = \mathbb{R}^{3+}.$ 


\subsection{Quadratic hazards} \label{subsec:quadratichazards}
	
	Let us now assume a quadratic form for the baseline hazard function, as
	\begin{equation} \label{eq:baselinehazardquadratic}
		h_0(t) = \gamma_0 + \gamma_1 t + \gamma_2t^2, \hspace{0.3cm} t>0,
	\end{equation} 
	where $\boldsymbol{\gamma} = (\gamma_0, \gamma_1, \gamma_2)$ are the quadratic relationship coefficients. 
	Applying the PH condition, the hazard rate function of a device lifetime subjected to a constant stress $x$ is then given by
	$$h(t, x) = ( \gamma_0 + \gamma_1 t + \gamma_2t^2) \exp\left(a_1 x\right)$$
	and the corresponding cumulative hazard function is 
	\begin{equation}
		H_{\boldsymbol{\theta}}(t) = \begin{cases}
			\exp\left(a_1 x\right) \left( \gamma_0t + \gamma_1 \frac{t^2}{2} + \gamma_2 \frac{t^3}{3}\right), & 0 < t < \tau,\\
			\exp\left(a_1 x\right)\left( \gamma_0(t+s) + \gamma_1 \frac{(t+s)^2}{2}  + \gamma_2 \frac{(t+s)^3}{3}\right), &  \tau < t < \infty,
		\end{cases}
	\end{equation}
	where $s$ is the solution of the equation
	\begin{equation}\label{eq:thirdorderequation}
		\frac{\gamma_2}{3}(\tau+s)^3 + \frac{\gamma_1}{2}(\tau+s)^2 +\gamma_0(\tau+s) - \exp\left( a_1 (x_1 - x_2)\right) \left( \gamma_0 \tau + \gamma_1 \frac{\tau^2}{2} + \gamma_2 \frac{\tau^3}{3}\right) = 0.
	\end{equation}
	The intercept of the log-linear relationship has again been omitted here because its multiplicative effect is represented by the coefficients of the quadratic baseline hazard function.
	Equation (\ref{eq:thirdorderequation}) has at least one real solution. As before, we may choose $s$ as the greatest negative solution of the equation.  
	Note that
	under quadratic baseline hazards, the model parameter is a 4-dimensional vector $\boldsymbol{\theta} = (\boldsymbol{\gamma}, a),$ with $\boldsymbol{\gamma}$ and $a_1$ being the coefficients defining the multiplicative factors in Equations (\ref{eq:baselinehazardquadratic}) and (\ref{eq:loglinear}). As before, there exist natural restrictions on the parameter space;
	the reliability of the devices should decrease when increasing the stress level, and the baseline hazard must be positive. To simplify the parameter constraints, we assume that all the coefficients in Equation (\ref{eq:baselinehazardquadratic}) are positive and so is $a_1$ for the log-linear factor. This is a strong assumption, but realistic in many situations, as suggested in \cite{krane1963analysis}. Therefore, the parameter space of $\boldsymbol{\theta} = (\boldsymbol{\gamma}, a_1)$ gets reduced to
	$\Theta = \mathbb{R}^{4+}.$ 
	
	Exponentiating the negative cumulative hazard function we obtain the reliability function of a device under test under quadratic baseline hazards  as
	\begin{equation} \label{eq:reliabilitylinearhazard}
		R_{\boldsymbol{\theta}}(t) = \begin{cases}
			\exp\left(-\exp\left( a_1 x_1\right) \left( \gamma_0t + \gamma_1 \frac{t^2}{2} + \gamma_2\frac{t^3}{3}\right)\right), & 0 < t < \tau,\\
			\exp\left(-\exp\left(a_1 x_2\right) \left( \gamma_0(t+s) + \gamma_1 \frac{(t+s)^2}{2}+ \gamma_2\frac{(t+s)^3}{3}\right)\right), &  \tau < t < \infty,
		\end{cases}
	\end{equation}
	where the shifting time $s$ satisfies Equation (\ref{eq:thirdorderequation}).

	
	\section{Robust estimators \label{sec:MDPDE}}
	
	Consider a interval-monitored simple step-stress ALT model  with two stress levels, $x_1$ and $x_2,$ switched at a pre-fixed time $\tau.$ A total of $N$ devices are tested and the number of failures is recorded at $L$ fixed inspections times $IT \in \{0 = t_0 < t_1 < \cdots < t_L\},$ including the time of stress change $\tau = t_{k}$ for a certain $k < L.$ Here, $k$ is the number of inspection times under the first stress and $L-k$ is the number of inspection times under the second stress level.
	Note that the experiment termination time is also fixed as $t_L$ (Type I censoring design).
	Then, the numbers of devices that have	failed within the $j$-th inspection interval, $[t_{j-1},t_j)$ are recorded as  $n_j$  and the remaining $n_{L+1} = N-\sum_{j=1}^{L} n_j$ is the number of surviving units after the termination time. The  data  recorded from such as step-stress experiment is summarized in Table \ref{table:design}.
		\begin{table}[htb]
		\centering
		\begin{tabular}{ccc}
			IT & Stress level & Number of failures \\
			\hline
			$t_0=0$ & $x_1$ & - \\
			$t_1$ & $x_1$ & $n_1$  \\
			\vdots &\vdots &\vdots  \\
			$t_{k-1}$ & $x_1$ & $n_{k-1}$ \\
			$t_{k}$ & $x_2$ & $n_{k}$  \\
			\vdots &\vdots &\vdots  \\
			$t_{L}$ & $x_2$ & $n_{L}$  \\
			\hline
		\end{tabular}
	\caption{Step-stress ALT design with interval-monitoring}
	\label{table:design}
	\end{table}

	Because the observed data are interval-censored, the complete likelihood of the model will not be useful. Instead, we can define the incomplete likelihood based on a multinomial model of the grouped data as follows. For each device under test, we consider the $L+1$ possible experimental results corresponding to failing within  inspected intervals $[t_{j-1},t_j), j=1,...,L,$ and surviving after the experiment termination time.
	The probability of failure within an interval $[t_{j-1},t_j)$ can be determined by evaluating the reliability of the lifetime as
	\begin{equation} \label{eq:th}
		\pi_j(\boldsymbol{\theta}) = R_{\boldsymbol{\theta}}(t_{j-1}) - R_{\boldsymbol{\theta}}(t_j) = \exp \left(-H_{\boldsymbol{\theta}}(t_{j-1})\right) - \exp \left(-H_{\boldsymbol{\theta}}(t_j)\right),
	\end{equation}
	and the probability of survival at termination time $t_L$ is given by
	$$\pi_{L+1}(\boldsymbol{\theta}) =  R_{\boldsymbol{\theta}}(t_L) = \exp \left(-H_{\boldsymbol{\theta}}(t_{L})\right). $$
	Using the expressions given in Equations (\ref{eq:baselinehazardlinear}) and (\ref{eq:baselinehazardquadratic})  for linear and quadratic baseline hazards, respectively, we can compute the probabilities of each experimental result under both linear and quadratic baseline hazard assumptions. 
	
	Then, adopting the multinomial approach with probability vector $\boldsymbol{\pi} (\boldsymbol{\theta}) = \left(\pi_1(\boldsymbol{\theta}),..., \pi_{L+1}(\boldsymbol{\theta})\right)^T$,
	the incomplete log-likelihood of the data is given by 
	\begin{equation}\label{eq:likelihood}
		\mathcal{L}(\boldsymbol{\theta}) \propto \sum_{j=1}^{L+1} n_j \log\left(\pi_{j}(\boldsymbol{\theta}) \right),
	\end{equation}
	and consequently the maximum likelihood estimator (MLE) is the maximizer (or equivalent by minimizer) of the previous log-likelihood (or negative log-likelihood). The MLE is a commonly used estimator due to its well known asymptotic properties; it is a consistent and efficient estimator. However, it is highly sensitive to outliers, which can lead to erroneous analysis and results in the presence of contamination in data.
	Alternatively, divergence-based estimators have a great advantage in terms of robustness with a small loss in efficiency for general statistical models. In particular, 
	\cite{balakrishnan2023robust, balakrishnan2023step, balakrishnan2023Non} developed robust minimum density power divergence estimators (MDPDEs) for non-destructive one-shot devices tested under step-stress experiments under the assumption of exponential, gamma and lognormal lifetime distributions, respectively. They demonstrated that the MDPDE family provides a bridge between efficient and robust estimators.
	Following on from previous work, we develop here the MDPDE under the PH assumption. 
	
	The DPD between two distributions aims to quantify the statistical closeness between the two associated random variables; The greater DPD they have, the more distinct the variables would be. In contrast, the DPD would be small (but always positive) for similar distributions. Then, when assuming a parametric model, we should look for a distribution that is as close as possible to the true underlying distribution.
	The DPD between two probability mass functions $\boldsymbol{p} = \left(p_1,...,p_{L+1}\right)^T$ and $\boldsymbol{q} = \left(q_1,...,q_{L+1}\right)^T$   is given, for $\beta >0$, by
	\begin{equation*}
		d_\beta(\boldsymbol{p}, \boldsymbol{q}) = \sum_{j=1}^{L+1} q_j^{\beta+1} - \left( 1 + \frac{1}{\beta}\right)\sum_{j=1}^{L+1} q_j^{\beta} p_j + \frac{1}{\beta}\sum_{j=1}^{L+1} p_j^{\beta+1}.
	\end{equation*}
	The DPD can be defined at $\beta = 0$ by taking continuous limit as $\beta \rightarrow 0$ and it yields the well-known Kullback-Leibler divergence given by
	\begin{equation*}
		d_{KL}(\boldsymbol{p}, \boldsymbol{q}) = \sum_{j=1}^{L+1} p_j\log\left(\frac{p_j}{q_j}\right).
	\end{equation*}
	
	As the true  underlying distribution is unknown, we can approximate it non-parametrically through its empirical probability vector $\widehat{\boldsymbol{p}} = \left(\frac{n_1}{N},...,\frac{n_L}{N}, \frac{n_{L+1}}{N}\right)^T$ and the DPD between the empirical and the assumed distribution, $\widehat{\boldsymbol{p}}$ and $\boldsymbol{\pi}(\boldsymbol{\theta}),$  is then given by
		\begin{equation}\label{eq:DPDloss}
		d_\beta(\widehat{\boldsymbol{p}}, \boldsymbol{\pi}(\boldsymbol{\theta})) = \sum_{j=1}^{L+1} \begin{cases}
		 \pi_j(\boldsymbol{\theta})^{\beta+1} - \left( 1 + \frac{1}{\beta}\right) \pi_j(\boldsymbol{\theta})^{\beta} \widehat{p}_j + \frac{1}{\beta} \widehat{p}_j^{\beta+1}, & \beta >0,\\
	 \widehat{p}_j\log\left(\frac{\widehat{p}_j}{\pi_j(\boldsymbol{\theta})}\right), & \beta=0.
		\end{cases}
	\end{equation}
	
	The closer the empirical probability and the assumed probability vector are, the better fit the assumed model provides for the observed data. Therefore, the MDPDE is defined as the minimizer of the DPD loss as follows:
	\begin{equation*}
		\widehat{\boldsymbol{\theta}}^\beta = \operatorname{min}_{\boldsymbol{\theta} \in \Theta} d_\beta(\widehat{\boldsymbol{p}}, \boldsymbol{\pi}(\boldsymbol{\theta}))
	\end{equation*}
	with $\Theta = \mathbb{R}^{2+}  \times \mathbb{R} \times \mathbb{R}^{+},$ for linear baseline and $\Theta = \mathbb{R}^{3+}  \times \mathbb{R} \times \mathbb{R}^{+}$  for the quadratic baseline. 
	Observe that the negative log-likelihood in (\ref{eq:likelihood}) and Kullback-Leibler divergence ($\beta=0$) in (\ref{eq:DPDloss}) are equivalent as objective functions. Moreover, the MLE is included in the MDPDE family as a particular case for $\beta=0,$ and all the asymptotic properties of the MLE are derived in the following section by specializing at $\beta=0.$

	\subsection{Linear baseline}
	
	
	We study the asymptotic properties of the MDPDEs for interval-monitored step-stress ALTs under linear baseline PH first. The asymptotic distribution of the MLE as well as an explicit expression of the Fisher information matrix of the incomplete model will be obtained as particular cases for our general results.
	
	First,  we introduce some useful notation. 
	Following Expression (\ref{eq:th}), the probability of failure within an inspected interval $(t_{j-1}, t_j]$ is given by
	\begin{equation} \label{pijquadratic}
		\pi_j(\boldsymbol{\theta})  = 
		\begin{cases}
			\exp \left( -\exp(a_1x_1) \left( \gamma_0t_{j-1} + \gamma_1 \frac{t_{j-1}^2}{2}\right) \right) -
			\exp \left( -\exp(a_1x_1) \left( \gamma_0t_j + \gamma_1 \frac{t_j^2}{2}\right) \right),
				 & j \leq k,\\
			\exp\left(-\exp(a_1x_2) \left( \gamma_0(t_{j-1}+s) + \gamma_1 \frac{(t_{j-1}+s)^2}{2}\right)\right) \\
			\hspace{2cm} -
			\exp\left(-\exp(a_1x_2) \left( \gamma_0(t_{j}+s) + \gamma_1 \frac{(t_{j}+s)^2}{2}\right)\right),
					&   k \leq j \leq L,\\
			\exp\left(-\exp(a_1x_2) \left( \gamma_0(t_j+s) + \gamma_1 \frac{(t_j+s)^2}{2}\right)\right),
				&  j=L+1 ,
		\end{cases}
	\end{equation}
where  the shifting time $s$ is a negative solution of Equation (\ref{eq:secondorderequation}). 
Because the MDPDE with tuning parameter $\beta$ is a minimum of the DPD loss, as defined in (\ref{eq:DPDloss}), it must annul the first derivatives of the loss. The following result presents the estimating equations of the MDPDE for interval-monitored step-stress ALTs under the assumption of linear proportional hazards.
\begin{theorem} \label{thm:estimatinglinear}
	Let us consider a sample $(n_1,...n_{L+1})$ from a simple interval-monitored step-stress ALT with $L$ inspection times and $N=\sum_{j=1}^{L+1}n_j $ devices under test. We define the non-parametric estimator of the probability as $\widehat{\boldsymbol{p}} = \left(\frac{n_1}{N},...,\frac{n_L}{N}, \frac{n_{L+1}}{N}\right).$ Then,
	the MDPDE  with parameter $\beta$ under the assumption of linear proportional hazards, $\widehat{\boldsymbol{\theta}}_\beta,$ must annul the $\beta$-score function defined  by
	\begin{equation} \label{eq:scorelinear}
		\boldsymbol{U}_\beta (\boldsymbol{\theta}) = \boldsymbol{W}(\boldsymbol{\theta})^T \boldsymbol{D}(\boldsymbol{\theta})^{\beta-1}\left(\widehat{\boldsymbol{p}} - \boldsymbol{\pi}(\boldsymbol{\theta})\right),
	\end{equation}
	where the matrix $\boldsymbol{W}(\boldsymbol{\theta})$ is the $4 \times (L+1)$ Jacobian matrix of the theoretical probability of failure $\boldsymbol{\pi}(\boldsymbol{\theta})$  as defined in (\ref{eq:th}),  with columns given by
	$$\boldsymbol{w}_j = \begin{cases}
		\frac{\partial R_{\boldsymbol{\theta}}(t_{j-1},x_1)}{\partial \boldsymbol{\theta}} - \frac{\partial R_{\boldsymbol{\theta}}(t_{j},x_1)}{\partial \boldsymbol{\theta}}, & t_{j-1} < \tau, \\
		\frac{\partial R_{\boldsymbol{\theta}}(t_{j-1},x_2)}{\partial \boldsymbol{\theta}} - \frac{\partial R_{\boldsymbol{\theta}}(t_{j},x_2)}{\partial \boldsymbol{\theta}}, & \tau \leq t_{j-1} ,
	\end{cases}$$
	where the four components of the gradient vector of the reliability function,
	$$\frac{\partial R_{\boldsymbol{\theta}}(t,x)}{\partial \boldsymbol{\theta}}  = \left(\frac{\partial R_{\boldsymbol{\theta}}(t,x)}{\partial \gamma_0},\frac{\partial R_{\boldsymbol{\theta}}(t,x)}{\partial \gamma_1},
	\frac{\partial R_{\boldsymbol{\theta}}(t,x)}{\partial a_1} \right)^T$$
 	are given, under the first stress level $x_1$, by
	\begin{center}
	\begin{equation*}
			\begin{aligned}
				\frac{\partial  R_{\boldsymbol{\theta}}(t,x_1)}{\partial \gamma_0} &= -	R_{\boldsymbol{\theta}}(t,x_1)
				\exp(a_1x_1) t ,\\
				\frac{\partial R_{\boldsymbol{\theta}}(t,x_1)}{\partial \gamma_1} &= -R_{\boldsymbol{\theta}}(t,x_1)
				\exp(a_1x_1) \frac{t^2}{2} ,\\
				%
				%
				\frac{\partial R_{\boldsymbol{\theta}}(t,x_1)}{\partial a_1} &= 	-R_{\boldsymbol{\theta}}(t,x_1)  \exp(a_1x_1) tk_2(t) x_2,\\
			\end{aligned}
	\end{equation*}
	\end{center}
	and under the increased stress level $x_2$ by
		\begin{center}
	\begin{equation*}
		\begin{aligned}
			\frac{\partial  R_{\boldsymbol{\theta}}(t,x_2)}{\partial \gamma_0} &= -	R_{\boldsymbol{\theta}}(t,x_2) 
			\exp(a_1x_2) \left( (t+s)+ 
			\frac{ \gamma_1}{2}\frac{(\tau+s) s}{k_2(\tau)}\frac{k_1(t+s)}{k_1(\tau+s)}
			\right),\\
			\frac{\partial R_{\boldsymbol{\theta}}(t,x_2)}{\partial \gamma_1} &= -R_{\boldsymbol{\theta}}(t,x_2)
			\exp(a_1x_2) \left(\frac{(t+s)^2}{2}-    \frac{\gamma_0}{2}\frac{(\tau+s)s}{k_2(\tau)} \frac{k_1(t+s)}{k_1(\tau+s)}          \right),\\
			%
			%
			\frac{\partial R_{\boldsymbol{\theta}}(t,x_2)}{\partial a_1} &= 	- R_{\boldsymbol{\theta}}(t,x_2) \exp(a_1x_2)\left[  (t+s)k_2(t+s) x_2
			+ \frac{k_1(t+s)}{k_1(\tau+s)} (\tau+s)k_2(\tau+s)(x_1-x_2) \right]\
		\end{aligned}
	\end{equation*}
	\end{center}
	and the functions $k_1(t)$ and $k_2(t)$ are $$k_1(t) = \gamma_1t+\gamma_0 \hspace{1cm}\text{and}\hspace{1cm} k_2(t) = \frac{\gamma_1}{2}t + \gamma_0.$$
	By $\boldsymbol{D}(\boldsymbol{\theta}),$ we denote a diagonal matrix with diagonal entries given by the probability vector $\boldsymbol{\pi}(\boldsymbol{\theta})$.
\end{theorem}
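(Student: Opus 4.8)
The plan is to prove the statement in two stages: first derive the matrix form (\ref{eq:scorelinear}) of the estimating equations by differentiating the DPD loss, and then compute the columns of $\boldsymbol{W}(\boldsymbol{\theta})$ explicitly, where the only genuine difficulty is the implicit dependence of the shifting time $s$ on $\boldsymbol{\theta}$.

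For the first stage I would start from the objective in (\ref{eq:DPDloss}) for $\beta>0$ and differentiate termwise with respect to $\boldsymbol{\theta}$. The last summand $\tfrac{1}{\beta}\widehat{p}_j^{\beta+1}$ is free of $\boldsymbol{\theta}$ and drops out, while the chain rule applied to $\pi_j(\boldsymbol{\theta})^{\beta+1}$ and $\pi_j(\boldsymbol{\theta})^{\beta}$ produces, after collecting the common factor $(\beta+1)$,
\begin{equation*}
	\frac{\partial}{\partial\boldsymbol{\theta}}\,d_\beta\bigl(\widehat{\boldsymbol{p}},\boldsymbol{\pi}(\boldsymbol{\theta})\bigr) = (\beta+1)\sum_{j=1}^{L+1}\pi_j(\boldsymbol{\theta})^{\beta-1}\bigl(\pi_j(\boldsymbol{\theta})-\widehat{p}_j\bigr)\frac{\partial\pi_j(\boldsymbol{\theta})}{\partial\boldsymbol{\theta}}.
\end{equation*}
Setting this gradient to zero, dropping the nonzero constant, and recognising that $\partial\pi_j/\partial\boldsymbol{\theta}$ is the $j$-th column $\boldsymbol{w}_j$ of $\boldsymbol{W}(\boldsymbol{\theta})$ and that $\pi_j(\boldsymbol{\theta})^{\beta-1}$ are the diagonal entries of $\boldsymbol{D}(\boldsymbol{\theta})^{\beta-1}$ yields exactly (\ref{eq:scorelinear}). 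The case $\beta=0$ follows by the same computation on the Kullback--Leibler branch, using $\sum_j \partial\pi_j/\partial\boldsymbol{\theta}=\boldsymbol{0}$ (which holds since $\sum_j\pi_j=1$) to rewrite the score in the same form, so the MLE is recovered in the limit.

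For the second stage, since $\pi_j=R_{\boldsymbol{\theta}}(t_{j-1})-R_{\boldsymbol{\theta}}(t_j)$ at the relevant stress level, it suffices to differentiate the reliability function and take differences across consecutive inspection times. Under the first stress $x_1$ the cumulative hazard is explicit in $\boldsymbol{\theta}$, so writing $R_{\boldsymbol{\theta}}(t,x_1)=\exp(-H_{\boldsymbol{\theta}}(t,x_1))$ and applying $\partial R/\partial\theta_i=-R\,\partial H/\partial\theta_i$ gives the three stated formulas at once, after noting $\gamma_0 t+\gamma_1 t^2/2 = t\,k_2(t)$. The substantive part is the second stress level, where $H_{\boldsymbol{\theta}}(t,x_2)$ depends on $\boldsymbol{\theta}$ both directly and through $s=s(\boldsymbol{\theta})$. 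Here I would treat $s$ via the implicit function theorem applied to the defining equation (\ref{eq:secondorderequation}): writing it as $g(s,\boldsymbol{\theta})=0$ one computes $\partial g/\partial s=k_1(\tau+s)$ together with $\partial g/\partial\gamma_0$, $\partial g/\partial\gamma_1$ and $\partial g/\partial a_1$, whence $\partial s/\partial\theta_i=-(\partial g/\partial\theta_i)/k_1(\tau+s)$. The total derivative of $H_{\boldsymbol{\theta}}(t,x_2)$ then splits into an explicit part and a contribution $\exp(a_1 x_2)\,k_1(t+s)\,\partial s/\partial\theta_i$ coming through $s$, since the derivative of $\gamma_0(t+s)+\gamma_1(t+s)^2/2$ in its argument is $k_1(t+s)$.

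The main obstacle, and the step requiring care, is showing that these implicit contributions collapse to the compact expressions in the statement. The key identity is obtained by rewriting (\ref{eq:secondorderequation}), after factoring $\gamma_0\tau+\gamma_1\tau^2/2=\tau\,k_2(\tau)$ and $\gamma_0(\tau+s)+\gamma_1(\tau+s)^2/2=(\tau+s)\,k_2(\tau+s)$, as
\begin{equation*}
	\exp\bigl(a_1(x_1-x_2)\bigr)\,\tau\,k_2(\tau) = (\tau+s)\,k_2(\tau+s),
\end{equation*}
which eliminates the exponential factor appearing in $\partial g/\partial\gamma_0$ and $\partial g/\partial\gamma_1$. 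Substituting this relation and simplifying, using the elementary cancellations $k_2(\tau)-k_2(\tau+s)=-\tfrac{\gamma_1}{2}s$ and $(\tau+s)k_2(\tau)-\tau k_2(\tau+s)=\gamma_0 s$, reduces $\partial s/\partial\gamma_0$ and $\partial s/\partial\gamma_1$ to the forms carrying the factor $(\tau+s)s/k_2(\tau)$ that appear multiplied by $k_1(t+s)/k_1(\tau+s)$ in the claimed derivatives; the $a_1$ case follows in the same way, with the factor $(x_1-x_2)$ surviving from $\partial g/\partial a_1$. Assembling the explicit and implicit parts, multiplying by $-R_{\boldsymbol{\theta}}(t,x_2)$, and differencing across inspection times then gives the columns $\boldsymbol{w}_j$, completing the proof.
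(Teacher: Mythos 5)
Your proposal is correct and follows essentially the same route as the paper's own proof: you differentiate the DPD loss to obtain the weighted estimating equations, then handle the shifting time by implicit differentiation of (\ref{eq:secondorderequation}) — your relation $\partial s/\partial\theta_i = -(\partial g/\partial\theta_i)/k_1(\tau+s)$ together with the identity $\exp\bigl(a_1(x_1-x_2)\bigr)\,\tau\,k_2(\tau)=(\tau+s)\,k_2(\tau+s)$ is exactly the paper's linear system and its simplifying equality $\exp(a_1x_1)/\exp(a_1x_2)=\frac{(\tau+s)}{\tau}\frac{k_2(\tau+s)}{k_2(\tau)}$ — and you assemble the columns $\boldsymbol{w}_j$ in the same way. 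One caveat worth noting: your (correct) direct computation gives the factor $x_1$ in $\partial R_{\boldsymbol{\theta}}(t,x_1)/\partial a_1$, so it does not literally reproduce the $x_2$ appearing in the statement (a typo in the paper), and your $\partial s/\partial a_1$ lacks the spurious $1/\tau$ in the paper's appendix expression while agreeing with the final column formulas, so your derivation is sound on both counts.
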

\begin{proof}
	See Appendix \ref{app:proof1}
\end{proof}

From the above result, the MPDPE can be defined as a solution of the system of equations
		$$	\boldsymbol{U}_\beta  (\boldsymbol{\theta}) = \boldsymbol{0}_4,$$
known as the estimating equations associated with the MDPDE, and thus  belongs to the family of M-estimators.
Moreover, we can derive the score function of the MLE by setting $\beta=0$ in Equation (\ref{eq:scorelinear}), yielding 
\begin{equation*} 
	\boldsymbol{U}_{\beta=0} (\boldsymbol{\theta}) = \boldsymbol{W}(\boldsymbol{\theta})^T \boldsymbol{D}(\boldsymbol{\theta})^{-1}\left(\widehat{\boldsymbol{p}} - \boldsymbol{\pi}(\boldsymbol{\theta})\right).
\end{equation*}
Notably,  the matrices $\boldsymbol{W}(\boldsymbol{\theta})$ and $\boldsymbol{D}(\boldsymbol{\theta})$ do not depend on the tuning parameter $\beta$ of the DPD loss, and thus the MDPDE estimating equations are a weighted version of the maximum likelihood estimating equations with weights equal to $\pi_j(\boldsymbol{\theta})^\beta,$ representing the probability  of failure within the $j$-th interval. Thus, the MDPDE down-weighs the observations on intervals with low probabilities. 

	\subsection{Quadratic baseline}
	
	In this subsection, we  derive the estimating equations and the asymptotic distribution of the MDPDE under quadratic baseline hazard. The primary equations can be obtained in a manner similar to the linear case, but it is important to note that the probabilities of failure (and consequently their derivatives) differ and must be computed separately. The parameter $\boldsymbol{\theta}$ under the assumption of quadratic hazard defined in (\ref{eq:baselinehazardquadratic}) refers to a $4-$dimensional vector with entries $(\gamma_0, \gamma_1, \gamma_2, a_1),$ the first three representing the baseline hazard and the  last entry representing the relationship of the lifetime distribution and the stress factor to which the units are subjected to.
	
	Using the notation in Section \ref{subsec:quadratichazards}, under quadratic baseline hazard, the probability of failure within an interval $(t_{j-1},t_j]$ is given by
		\begin{equation} \label{eq:thquadratic}
		\pi_j(\boldsymbol{\theta})  = 
		\begin{cases}
			\exp \left( -\exp(a_1x_1) \left( \gamma_0t_{j-1} + \gamma_1 \frac{t_{j-1}^2}{2} + \gamma_2\frac{t_{j-1}^2}{3}\right) \right)\\
			\hspace{2cm} -
			\exp \left( -\exp(a_1x_1) \left( \gamma_0t_j + \gamma_1 \frac{t_j^2}{2} + \gamma_2 t_{j}^{3}\right) \right),
			& j \leq k,\\
			\exp\left(-\exp(a_1x_2) \left( \gamma_0(t_{j-1}+s) + \gamma_1 \frac{(t_{j-1}+s)^2}{2} + \gamma_2 \frac{(t_{j-1}+s)^3}{3} \right)\right) \\
			\hspace{2cm} -
			\exp\left(-\exp(a_1x_2) \left( \gamma_0(t_{j}+s) + \gamma_1 \frac{(t_{j}+s)^2}{2} + \gamma_2\frac{(t_{j}+s)^3}{3}  \right)\right),
			&   k \leq j \leq L,\\
			\exp\left(-\exp(a_1x_2) \left( \gamma_0(t_j+s) + \gamma_1 \frac{(t_j+s)^2}{2} + \gamma_2 \frac{(t_j+s)^3}{3} \right) \right),
			&  j=L+1 ,
		\end{cases}
	\end{equation}
	where $s$ is the shifting time obtained as a solution of Equation (\ref{eq:thirdorderequation}).
	
	\begin{theorem} \label{thm:estimatingquadratic}
		Given a sample $(n_1,...n_{L+1})$ from a simple interval-monitored step-stress ALT with $L$ inspection times and $N=\sum_{j=1}^{L+1}n_j $ devices under test,
		the MDPDE  with parameter $\beta$ under the assumption of quadratic proportional hazard, $\widehat{\boldsymbol{\theta}}_\beta,$ must annul the $\beta$-score function defined  by
		\begin{equation} \label{eq:score}
			\boldsymbol{U}_\beta (\boldsymbol{\theta}) = \boldsymbol{W}(\boldsymbol{\theta})^T \boldsymbol{D}(\boldsymbol{\theta})^{\beta-1}\left(\widehat{\boldsymbol{p}} - \boldsymbol{\pi}(\boldsymbol{\theta})\right),
		\end{equation}
		where $\widehat{\boldsymbol{p}} = \left(\frac{n_1}{N},...,\frac{n_L}{N}, \frac{n_{L+1}}{N}\right)^T$ is the empirical probability of failure,
		the matrix $\boldsymbol{W}(\boldsymbol{\theta})$ is the $5 \times (L+1)$ Jacobian matrix of the theoretical probability of failure $\boldsymbol{\pi}(\boldsymbol{\theta})$  defined in Equation (\ref{eq:th}),  with columns given by
		$$\boldsymbol{w}_j = \begin{cases}
			\frac{\partial R_{\boldsymbol{\theta}}(t_{j-1},x_1)}{\partial \boldsymbol{\theta}} - \frac{\partial R_{\boldsymbol{\theta}}(t_{j},x_1)}{\partial \boldsymbol{\theta}}, & t_{j-1} < \tau, \\
			\frac{\partial R_{\boldsymbol{\theta}}(t_{j-1},x_2)}{\partial \boldsymbol{\theta}} - \frac{\partial R_{\boldsymbol{\theta}}(t_{j},x_2)}{\partial \boldsymbol{\theta}}, & \tau \leq t_{j-1},
		\end{cases}$$
		where the five components of the gradient vector of the reliability function,
		$$\frac{\partial R_{\boldsymbol{\theta}}(t,x)}{\partial \boldsymbol{\theta}}  = \left(\frac{\partial R_{\boldsymbol{\theta}}(t,x)}{\partial \gamma_0},\frac{\partial R_{\boldsymbol{\theta}}(t,x)}{\partial \gamma_1}, \frac{\partial R_{\boldsymbol{\theta}}(t,x)}{\partial \gamma_2},
		\frac{\partial R_{\boldsymbol{\theta}}(t,x)}{\partial a_1} \right)^T$$
		are given, under the first stress level $x_1$, by
		\begin{center}
			\begin{equation*}
			\begin{aligned}
				\frac{\partial  R_{\boldsymbol{\theta}}(t,x_1)}{\partial \gamma_0} &= -	R_{\boldsymbol{\theta}}(t,x_1)
				\exp(a_1x_1) t ,\\
				\frac{\partial R_{\boldsymbol{\theta}}(t,x_1)}{\partial \gamma_1} &= -R_{\boldsymbol{\theta}}(t,x_1)
				\exp(a_1x_1) \frac{t^2}{2} ,\\
				\frac{\partial R_{\boldsymbol{\theta}}(t,x_1)}{\partial \gamma_2} &= -R_{\boldsymbol{\theta}}(t,x_1)
				\exp(a_1x_1) \frac{t^3}{3} ,\\
				%
				\frac{\partial R_{\boldsymbol{\theta}}(t,x_1)}{\partial a_1} &= 	-R_{\boldsymbol{\theta}}(t,x_1) \exp(a_1x_1)tk_2(t) x_2,\\
			\end{aligned}
		\end{equation*}
		\end{center}
		and under the increased stress level $x_2$ by
		\begin{center}
			\begin{equation*}
			\begin{aligned}
				\frac{\partial  R_{\boldsymbol{\theta}}(t,x_2)}{\partial \gamma_0} &= -	R_{\boldsymbol{\theta}}(t,x_2) 
				\exp(a_1x_2) \left( (t+s)+  (\tau+s)\frac{k_1(t+s) }{k_1(\tau+s)}\left(\frac{k_2(\tau+s)}{k_2(\tau)}-1\right)
				\right),\\
				\frac{\partial R_{\boldsymbol{\theta}}(t,x_2)}{\partial \gamma_1} &= -R_{\boldsymbol{\theta}}(t,x_2)
				\exp(a_1x_2) \left(\frac{(t+s)^2}{2}+\frac{(\tau+s)}{2}\frac{ k_1(t+s)}{k_1(\tau+s)}\left(\tau\frac{k_2(\tau+s)}{k_2(\tau)}-(\tau+s)\right)          \right),\\
				\frac{\partial R_{\boldsymbol{\theta}}(t,x_2)}{\partial \gamma_2} &= -R_{\boldsymbol{\theta}}(t,x_2)
				\exp(a_1x_2) \left(\frac{(t+s)^3}{3}+  \frac{(\tau+s)}{3}\frac{k_1(t+s)}{k_1(\tau+s)}\left(\tau^2\frac{k_2(\tau+s)}{k_2(\tau)}-(\tau+s)^2\right)        \right),\\
				%
				%
				\frac{\partial R_{\boldsymbol{\theta}}(t,x_2)}{\partial a_1} &=  - R_{\boldsymbol{\theta}}(t,x_2) \exp(a_1x_2) \left[(t+s)k_2(t+s) x_2 +
				\frac{k_1(t+s)}{k_1(\tau+s)}(\tau+s)k_2(\tau+s)(x_1-x_2) \right],
			\end{aligned}
		\end{equation*}
		\end{center}
		and the functions $k_1(t)$ and $k_2(t)$ are $$k_1(t) = \gamma_2t^2+\gamma_1t+\gamma_0 \hspace{1cm}\text{and}\hspace{1cm} k_2(t) = \frac{\gamma_2}{3} t^2 + \frac{\gamma_1}{2}t + \gamma_0.$$
		Moreover,  $\boldsymbol{D}(\boldsymbol{\theta})$ denotes a diagonal matrix with diagonal entries given by the probability vector $\boldsymbol{\pi}(\boldsymbol{\theta})$.
	\end{theorem}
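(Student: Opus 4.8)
The plan is to exploit the defining property of the MDPDE: since $\widehat{\boldsymbol{\theta}}_\beta$ minimizes the DPD loss $d_\beta(\widehat{\boldsymbol{p}},\boldsymbol{\pi}(\boldsymbol{\theta}))$ in (\ref{eq:DPDloss}), any interior minimizer annuls its gradient. Differentiating term by term with respect to $\boldsymbol{\theta}$, the purely data-dependent summand $\tfrac{1}{\beta}\widehat{p}_j^{\beta+1}$ vanishes, while the remaining two summands yield $(\beta+1)\pi_j(\boldsymbol{\theta})^{\beta}\partial_{\boldsymbol{\theta}}\pi_j$ and $-(\beta+1)\pi_j(\boldsymbol{\theta})^{\beta-1}\widehat{p}_j\,\partial_{\boldsymbol{\theta}}\pi_j$. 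Factoring out the irrelevant constant $(\beta+1)$ and the common weight $\pi_j(\boldsymbol{\theta})^{\beta-1}$, the stationarity condition collapses to $\sum_{j=1}^{L+1}\pi_j(\boldsymbol{\theta})^{\beta-1}\bigl(\widehat{p}_j-\pi_j(\boldsymbol{\theta})\bigr)\,\partial_{\boldsymbol{\theta}}\pi_j=\boldsymbol{0}$. Collecting the gradients $\partial_{\boldsymbol{\theta}}\pi_j$ as the columns $\boldsymbol{w}_j$ of $\boldsymbol{W}(\boldsymbol{\theta})$ and the weights $\pi_j(\boldsymbol{\theta})^{\beta-1}$ as the diagonal of $\boldsymbol{D}(\boldsymbol{\theta})^{\beta-1}$ gives precisely the matrix form (\ref{eq:score}), with the case $\beta=0$ recovering the MLE score by the continuous limit.

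It then remains to identify the columns $\boldsymbol{w}_j=\partial_{\boldsymbol{\theta}}\pi_j$. Because each failure probability in (\ref{eq:thquadratic}) is a difference of reliabilities, $\pi_j=R_{\boldsymbol{\theta}}(t_{j-1},x)-R_{\boldsymbol{\theta}}(t_j,x)$ (with $\pi_{L+1}=R_{\boldsymbol{\theta}}(t_L,x_2)$), the problem reduces to differentiating the single function $R_{\boldsymbol{\theta}}(t,x)=\exp\bigl(-\exp(a_1x)H_0(t+s)\bigr)$, where $H_0(u)=\gamma_0u+\gamma_1u^2/2+\gamma_2u^3/3=u\,k_2(u)$ is the cumulative baseline hazard with $H_0'(u)=k_1(u)$, and the shift is $s=0$ for $t<\tau$. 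Under the first stress $x_1$ the shift is absent, so a plain chain rule immediately produces the four stated gradient components.

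The principal obstacle is the second stress level $x_2$, where $s=s(\boldsymbol{\theta})$ is defined only implicitly by the cubic (\ref{eq:thirdorderequation}). I would rewrite that equation as $G(s,\boldsymbol{\theta}):=H_0(\tau+s)-\exp\bigl(a_1(x_1-x_2)\bigr)H_0(\tau)=0$ and apply the implicit function theorem: since $\partial G/\partial s=H_0'(\tau+s)=k_1(\tau+s)$, one has $\partial s/\partial\theta_i=-(\partial G/\partial\theta_i)/k_1(\tau+s)$. Differentiating $R_{\boldsymbol{\theta}}(t,x_2)$ by the chain rule then separates into an explicit $\theta_i$-contribution and an implicit one through $s$, the latter carrying a factor $k_1(t+s)\,\partial s/\partial\theta_i$.

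The closing step is the algebraic simplification into the compact $k_1,k_2$ form, and the crucial ingredient is the identity read off directly from $G(s,\boldsymbol{\theta})=0$, namely $\exp(a_1(x_1-x_2))=H_0(\tau+s)/H_0(\tau)=(\tau+s)k_2(\tau+s)/(\tau\,k_2(\tau))$. Substituting this into each $\partial G/\partial\theta_i$ cancels the exponential factor and merges the explicit and implicit contributions; for instance $\partial G/\partial\gamma_0=(\tau+s)-\exp(a_1(x_1-x_2))\tau=(\tau+s)\bigl(1-k_2(\tau+s)/k_2(\tau)\bigr)$, which turns the implicit term into $(\tau+s)\tfrac{k_1(t+s)}{k_1(\tau+s)}\bigl(k_2(\tau+s)/k_2(\tau)-1\bigr)$, exactly the correction appearing in the $\gamma_0$ formula. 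The $\gamma_1$ and $\gamma_2$ components are handled identically, while the $a_1$ derivative additionally retains the explicit term $x_2\exp(a_1x_2)(t+s)k_2(t+s)$ from differentiating the stress factor. The linear-baseline result of Theorem \ref{thm:estimatinglinear} is recovered throughout by setting $\gamma_2=0$.
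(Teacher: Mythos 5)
Your proposal is correct and follows essentially the same route as the paper's own proof: stationarity of the DPD loss yielding the weighted score in matrix form, implicit differentiation of the cubic defining the shift $s$ (your implicit-function-theorem formulation via $G(s,\boldsymbol{\theta})=0$ with $\partial G/\partial s=k_1(\tau+s)$ is just a compact repackaging of the paper's explicit linear system), and the same key identity $\exp\left(a_1(x_1-x_2)\right)=\frac{(\tau+s)k_2(\tau+s)}{\tau\,k_2(\tau)}$ used to merge the explicit and implicit contributions into the stated $k_1,k_2$ forms. One minor remark: your plain chain rule at the first stress level produces the factor $x_1$ in $\partial R_{\boldsymbol{\theta}}(t,x_1)/\partial a_1$, whereas the statement and the paper's proof both print $x_2$ there---an apparent typo in the paper, so your derivation actually gives the corrected expression.
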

	\begin{proof}
		See Appendix \ref{app:proof2}
	\end{proof}
	
	As in the linear case, the score function of the MLE can be obtained by specializing at $\beta=0$  and the MDPDE estimating equations
		$ \boldsymbol{U}_\beta  (\boldsymbol{\theta}) = \boldsymbol{0}_5$
	 can be viewed as a weighted version of the efficient maximum likelihood score, with weights $\pi_j(\boldsymbol{\theta}).$
	That is, for positive values of the tuning parameter $\beta$, it provides a relative-to-the-model down-weighting for outlying observations; higher-than-expected failure counts that are considerably discrepant with respect to the model will get nearly zero weights. 
	In the most efficient case ($\beta=0$), all observations, including very severe outliers, get weights equal to one.

	\subsection{Asymptotic properties}
	
	We next derive the asymptotic distribution of the MDPDE under linear and quadratic baseline hazards. 
	This asymptotic convergence is a key result for two reasons: Firstly, it demonstrates the consistency of the estimator, and secondly it facilitates the derivation of asymptotic confidence intervals for parameters of interest.
	\begin{theorem} \label{thm:asymptotic}
		Let $\boldsymbol{\theta}^{\ast}$ be the true value of the parameter $\boldsymbol{\theta}$ and $\boldsymbol{\pi} (\boldsymbol{\theta}^{\ast})$ denote the true probability mass vector of the multinomial model.
		Then, the asymptotic distribution of the
		MDPDE, $\widehat{\boldsymbol{\theta}}^\beta,$ for the interval-monitored step-stress ALT model under PH, is given by
		$$\sqrt{N}\left(\widehat{\boldsymbol{\theta}}^\beta - \boldsymbol{\theta}^{\ast} \right) \xrightarrow[N\rightarrow \infty]{L} \mathcal{N} \left(\boldsymbol{0}, \boldsymbol{\Sigma}_\beta(\boldsymbol{\theta}^{\ast})\right),
		$$
		where the asymptotic variance-covariance matrix is given by
		\begin{equation}\label{eq:Sigmamatrix}
			\boldsymbol{\Sigma}_\beta(\boldsymbol{\theta}) = \boldsymbol{J}_\beta^{-1}(\boldsymbol{\theta})
			\boldsymbol{K}_\beta(\boldsymbol{\theta})
			\boldsymbol{J}_\beta^{-1}(\boldsymbol{\theta})
		\end{equation}
		with
		\begin{equation}
			\begin{aligned}
				\boldsymbol{J}_\beta(\boldsymbol{\theta}) &= \boldsymbol{W}(\boldsymbol{\theta})^T \boldsymbol{D}(\boldsymbol{\theta})^{\beta-1} \boldsymbol{W}(\boldsymbol{\theta}),\\
				\boldsymbol{K}_\beta(\boldsymbol{\theta}) &= \boldsymbol{W}(\boldsymbol{\theta})^T \left(\boldsymbol{D}(\boldsymbol{\theta})^{2\beta-1} - \boldsymbol{\pi}(\boldsymbol{\theta})^\beta (\boldsymbol{\pi}(\boldsymbol{\theta})^\beta)^T \right) \boldsymbol{W}(\boldsymbol{\theta}),\\
			\end{aligned}
		\end{equation}
		and $\boldsymbol{W}(\boldsymbol{\theta})$ is as defined  in Result \ref{thm:estimatinglinear} for the linear baseline hazard and as in Result \ref{thm:estimatingquadratic} for the quadratic baseline hazard.
	\end{theorem}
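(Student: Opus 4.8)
The plan is to treat $\widehat{\boldsymbol{\theta}}^\beta$ as the M-estimator determined by the estimating equations $\boldsymbol{U}_\beta(\boldsymbol{\theta})=\boldsymbol{0}$ of Results \ref{thm:estimatinglinear} and \ref{thm:estimatingquadratic}, and to obtain its limiting law through a first-order expansion of that equation around the true value $\boldsymbol{\theta}^\ast$. The only stochastic input is the classical multinomial central limit theorem: since $(n_1,\dots,n_{L+1})$ follows a multinomial law of size $N$ with cell probabilities $\boldsymbol{\pi}(\boldsymbol{\theta}^\ast)$, the empirical vector satisfies
$$\sqrt{N}\bigl(\widehat{\boldsymbol{p}}-\boldsymbol{\pi}(\boldsymbol{\theta}^\ast)\bigr)\xrightarrow[N\rightarrow\infty]{L}\mathcal{N}\bigl(\boldsymbol{0},\,\boldsymbol{D}(\boldsymbol{\theta}^\ast)-\boldsymbol{\pi}(\boldsymbol{\theta}^\ast)\boldsymbol{\pi}(\boldsymbol{\theta}^\ast)^T\bigr),$$
with the usual multinomial covariance $\boldsymbol{D}(\boldsymbol{\theta}^\ast)-\boldsymbol{\pi}(\boldsymbol{\theta}^\ast)\boldsymbol{\pi}(\boldsymbol{\theta}^\ast)^T$. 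Everything else reduces to two deterministic matrix computations plus Slutsky's theorem.

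First I would evaluate the score at the truth. Because $\boldsymbol{\pi}(\boldsymbol{\theta}^\ast)$ is the true cell-probability vector, Equation (\ref{eq:scorelinear}) gives $\sqrt{N}\,\boldsymbol{U}_\beta(\boldsymbol{\theta}^\ast)=\boldsymbol{W}(\boldsymbol{\theta}^\ast)^T\boldsymbol{D}(\boldsymbol{\theta}^\ast)^{\beta-1}\sqrt{N}(\widehat{\boldsymbol{p}}-\boldsymbol{\pi}(\boldsymbol{\theta}^\ast))$, a fixed linear image of the asymptotically normal vector above, hence itself asymptotically centred normal. Its limiting covariance is $\boldsymbol{W}^T\boldsymbol{D}^{\beta-1}(\boldsymbol{D}-\boldsymbol{\pi}\boldsymbol{\pi}^T)\boldsymbol{D}^{\beta-1}\boldsymbol{W}$, and since $\boldsymbol{D}$ is diagonal with entries $\pi_j$ one has $\boldsymbol{D}^{\beta-1}\boldsymbol{D}\boldsymbol{D}^{\beta-1}=\boldsymbol{D}^{2\beta-1}$ and $\boldsymbol{D}^{\beta-1}\boldsymbol{\pi}=\boldsymbol{\pi}^\beta$ (the vector of $\pi_j^\beta$), so the covariance collapses to exactly $\boldsymbol{K}_\beta(\boldsymbol{\theta}^\ast)$. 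Second, I would differentiate $\boldsymbol{U}_\beta(\boldsymbol{\theta})=\boldsymbol{W}(\boldsymbol{\theta})^T\boldsymbol{D}(\boldsymbol{\theta})^{\beta-1}(\widehat{\boldsymbol{p}}-\boldsymbol{\pi}(\boldsymbol{\theta}))$: the product rule produces three groups of terms, the two arising from differentiating $\boldsymbol{W}^T$ and $\boldsymbol{D}^{\beta-1}$ carry the factor $(\widehat{\boldsymbol{p}}-\boldsymbol{\pi}(\boldsymbol{\theta}))$, which tends to $\boldsymbol{0}$ in probability at $\boldsymbol{\theta}^\ast$, while the surviving term uses $\partial\boldsymbol{\pi}/\partial\boldsymbol{\theta}=\boldsymbol{W}^T$. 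Hence the Jacobian of $\boldsymbol{U}_\beta$ converges in probability to $-\boldsymbol{W}^T\boldsymbol{D}^{\beta-1}\boldsymbol{W}=-\boldsymbol{J}_\beta(\boldsymbol{\theta}^\ast)$.

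Putting the pieces together, a first-order Taylor expansion of $\boldsymbol{U}_\beta(\widehat{\boldsymbol{\theta}}^\beta)=\boldsymbol{0}$ about $\boldsymbol{\theta}^\ast$, with the Jacobian evaluated at an intermediate point and replaced by its probability limit $-\boldsymbol{J}_\beta(\boldsymbol{\theta}^\ast)$, rearranges to
$$\sqrt{N}\bigl(\widehat{\boldsymbol{\theta}}^\beta-\boldsymbol{\theta}^\ast\bigr)=\boldsymbol{J}_\beta(\boldsymbol{\theta}^\ast)^{-1}\sqrt{N}\,\boldsymbol{U}_\beta(\boldsymbol{\theta}^\ast)+o_P(1).$$
Slutsky's theorem together with the symmetry of $\boldsymbol{J}_\beta$ then yields the limiting law $\mathcal{N}(\boldsymbol{0},\boldsymbol{J}_\beta^{-1}\boldsymbol{K}_\beta\boldsymbol{J}_\beta^{-1})=\mathcal{N}(\boldsymbol{0},\boldsymbol{\Sigma}_\beta(\boldsymbol{\theta}^\ast))$. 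The derivation is identical for the linear and quadratic baselines; only the explicit entries of $\boldsymbol{W}$ (from Result \ref{thm:estimatinglinear} or \ref{thm:estimatingquadratic}) and the dimension of $\boldsymbol{\theta}$ change.

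I expect the real work to lie not in these computations but in the regularity justifications surrounding them. The \textbf{main obstacle} is to guarantee a consistent sequence of roots $\widehat{\boldsymbol{\theta}}^\beta$ of $\boldsymbol{U}_\beta=\boldsymbol{0}$ lying in the interior of $\Theta$, so that the expansion is legitimate; I would secure this either by invoking the general existence-and-consistency theory of minimum density power divergence estimators in multinomial models, or by a direct argument that the DPD objective is locally convex near $\boldsymbol{\theta}^\ast$. A second delicate point is that $\boldsymbol{\pi}(\boldsymbol{\theta})$ depends on $\boldsymbol{\theta}$ only \emph{implicitly}, through the shifting time $s=s(\boldsymbol{\theta})$ solving the polynomial Equations (\ref{eq:secondorderequation}) or (\ref{eq:thirdorderequation}); the smoothness of $s(\boldsymbol{\theta})$ needed for $\boldsymbol{W}$ and for controlling the Taylor remainder follows from the implicit function theorem at the selected negative root, and is precisely the differentiability already exploited in Results \ref{thm:estimatinglinear} and \ref{thm:estimatingquadratic}. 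Finally I would check that the second-derivative remainder terms are uniformly bounded on a neighbourhood of $\boldsymbol{\theta}^\ast$, so that together with consistency they contribute only $o_P(1)$.
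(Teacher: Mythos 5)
Your proposal is correct and follows essentially the same route as the paper, which does not spell out the argument but defers to the proof of Result 3 in \cite{balakrishnan2023robust} --- precisely the standard M-estimation linearization you reproduce: multinomial CLT for $\sqrt{N}(\widehat{\boldsymbol{p}}-\boldsymbol{\pi}(\boldsymbol{\theta}^{\ast}))$, the identities $\boldsymbol{D}^{\beta-1}\boldsymbol{D}\boldsymbol{D}^{\beta-1}=\boldsymbol{D}^{2\beta-1}$ and $\boldsymbol{D}^{\beta-1}\boldsymbol{\pi}=\boldsymbol{\pi}^{\beta}$ giving $\boldsymbol{K}_\beta$, the Jacobian limit $-\boldsymbol{J}_\beta$, and Slutsky. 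Your added remarks on root consistency and on the smoothness of the shifting time $s(\boldsymbol{\theta})$ via the implicit function theorem are exactly the regularity points the citation leaves implicit, so nothing is missing.
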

	
	\begin{proof}
		 The proof follows similar lines to the proof of Result 3 in \cite{balakrishnan2023robust}.
	\end{proof}
	
	Because the MDPDE at $\beta=0$ coincides with the MLE, the inverse of the Fisher information matrix of the model is given by
	\begin{equation}
		\boldsymbol{I}_F^{-1}(\boldsymbol{\theta}) = 	\boldsymbol{\Sigma}_{\beta=0}(\boldsymbol{\theta}) = \boldsymbol{J}_{\beta=0}^{-1}(\boldsymbol{\theta})
		\boldsymbol{K}_{\beta=0}(\boldsymbol{\theta})
		\boldsymbol{J}_{\beta=0}^{-1}(\boldsymbol{\theta}) = \boldsymbol{W}(\boldsymbol{\theta})^T \boldsymbol{D}(\boldsymbol{\theta})^{-1} \boldsymbol{W}(\boldsymbol{\theta}).
	\end{equation}
	
	Moreover, from the above asymptotic distribution, the standard errors for all MDPDEs with tuning parameter $\beta$, $\widehat{\gamma}^\beta_i, i=0,1,2,$ for the baseline hazard parameter or $\widehat{a}^\beta_1$ for the log-linear relation with the stress level, can be obtained as the diagonal entries of the asymptotic variance-covariance matrix $\boldsymbol{\Sigma}_\beta(\boldsymbol{\theta})$ given in (\ref{eq:Sigmamatrix}). 
	These standard errors will be denoted by $$\sigma^2_\beta(\gamma_i^\ast) = \boldsymbol{\Sigma}_\beta(\boldsymbol{\theta}^\ast)_{i+1,i+1}, i=0,1,2,$$
	for the baseline hazard parameters, and 
	$$\sigma^2_\beta(a_1^\ast) = \boldsymbol{\Sigma}_\beta(\boldsymbol{\theta}^\ast)_{3,3}  \hspace{0.3cm}\text{or} \hspace{0.3cm} \sigma^2_\beta(a_1^\ast) = \boldsymbol{\Sigma}_\beta(\boldsymbol{\theta}^\ast)_{4,4}$$ 
	for the log-linear relationship parameters under linear and quadratic baseline hazard functions, respectively.
	A consistent estimator of these standard  errors can be estimated by plugging-in any consistent estimator of the true parameter. Here, we will choose the same value of the tuning parameter $\beta$ for the MDPDE, ensuring robust estimation of the standard estimation error for positive values of the tuning parameter. However, any other estimator could be used instead. For example, for a more efficient (but non-robust) estimation, we could make use of  the MLE for estimating the standard error of any MDPDE.
	
	From the above discussion, we can build approximate $100(1-\alpha)\%$ confidence intervals for the baseline hazard parameters $\gamma_i$ (for $i=0,1, 2$) and the stress-factor relationship $\widehat{a}^\beta_1$ as follows.
	For the baseline hazard parameters, $\gamma_i,$ for $i=0,1, 2,$ the approximate confidence intervals are given by
	$$IC_{1-\alpha}\left[ \gamma_i \right] = \left[\widehat{\gamma}_i^\beta \pm z_{\alpha/2} \sigma_\beta(\widehat{\gamma}_i^\beta)\right],$$
	and similarly,	for the stress-factor relationship  parameter, $\widehat{a}^\beta_1,$  we have
	$$IC_{1-\alpha}\left[ a_1 \right] = \left[\widehat{a}_1^\beta \pm z_{\alpha/2} \sigma_\beta(\widehat{a}_1^\beta)\right].$$
	Here, $z_{\alpha}$ denotes the upper $\alpha$-quantile of a standard normal distribution.
	If any of these estimated intervals extend beyond the boundaries of a parameter space, such as the requirement for baseline hazard parameters to be positive, the extreme of intervals need to be truncated.



	\section{Point estimation of lifetime characteristics \label{sec:char}}
	
	Many reliability analyses aim to estimate specific lifetime characteristics rather than the complete lifetime distribution function. Specifically, inferring the median, mean lifetime, distribution quantiles, and reliability at fixed mission times are often of interest under NOC.
	Once the model parameters $\boldsymbol{\theta} = (\boldsymbol{\gamma}, \boldsymbol{a})$  have been estimated, these quantities can be readily estimated by substituting the parameter estimates into the distribution function and then computing the corresponding lifetime characteristic of interest.
	As we assume general linear or quadratic baseline hazard functions, the derivation of explicit expressions for the previously mentioned lifetime characteristics under both baseline forms  is necessary. In this section, we develop explicit expressions for estimating the mean lifetime, distribution quantiles (including the median of the distribution), and reliability at fixed mission times under NOC based on the MDPDEs. We also present their asymptotic properties, inherited  from the robust estimators, as well as asymptotic confidence intervals.
	
	\subsection{Linear baseline hazard} \label{sec:char_linear}
	
	We first assume linear baseline hazard function defined in Equation (\ref{eq:baselinehazardlinear}), with unknown parameters $\boldsymbol{\gamma} = (\gamma_0,\gamma_1).$ The cumulative distribution function of the lifetime $T$ under normal operating conditions $x_0$ is then given by
	\begin{equation} \label{eq:cdflinear}
		F_T(t) = 	1- \exp\left(-\exp\left( a_1 x_0\right) \left( \gamma_0t + \gamma_1 \frac{t^2}{2}\right)\right), \hspace{0.3cm} 0 < t < \infty,
	\end{equation}
	where $\exp\left( a_1 x_0\right)$ represents the multiplicative effect of the stress level under NOC.
	
	\subsubsection{Mean lifetime to failure}	
	
	The mean lifetime is often a key parameter in reliability analyses because it provides valuable insight into the performance and durability of the considered product. It may play an important role in decision-making, risk assessment, and quality control across various industries.
	 Using the expression of the c.d.f. in (\ref{eq:cdflinear}), the mean of the lifetime of the product is given by
	\begin{equation} \label{eq:meanlinear}
			E_{\boldsymbol{\theta}}(T) 
			= \frac{1}{2} \sqrt{\frac{2\pi}{\gamma_1 \exp(a_1x_0)}} \exp\left(\frac{\exp(a_1x_0)\gamma_0^2}{2\gamma_1}\right).
	\end{equation}
Detailed calculations for the mean lifetime are provided in  Appendix \ref{app:meanlinear}. Consequently, the MDPDE  of the mean lifetime with parameter $\beta$ can be computed readily as
	\begin{equation*} \label{eq:meanlinearest}
		E_{\widehat{\boldsymbol{\theta}}^\beta}[T] = \frac{1}{2} \sqrt{\frac{2\pi}{\widehat{\gamma}^\beta_1 \exp(\widehat{a}^\beta_1x_0)}} \exp\left(\frac{\exp(\widehat{a}^\beta_1x_0)(\widehat{\gamma}^\beta_0)^2}{2\widehat{\gamma}^\beta_1}\right),
\end{equation*}
where $\widehat{\boldsymbol{\theta}}^\beta = (\widehat{\gamma}^\beta_0, \widehat{\gamma}^\beta_1, \widehat{a}^\beta_1)$ is the MDPDE of the model parameter $\boldsymbol{\theta}.$
As the estimated mean lifetime can be expressed in terms of a differentiable function of the MDPDE,
the asymptotic distribution of the MDPDE of the mean lifetime, $E_T(\widehat{\boldsymbol{\theta}}^\beta),$ can be obtained directly by applying the Delta-method. In particular, it can be shown that the MDPDE of the mean lifetime is such that
$$
\sqrt{N}\left(E_{\widehat{\boldsymbol{\theta}}^\beta}[T]- E_{\boldsymbol{\theta}^\ast}[T] \right) \xrightarrow[N\rightarrow \infty]{L} \mathcal{N} \left(\boldsymbol{0}, \sigma_\beta(E_T(\boldsymbol{\theta}^{\ast}))\right),
$$
where $\boldsymbol{\theta}^{\ast} = (\gamma_0^{\ast}, \gamma_1^{\ast},  a_1^{\ast})$ is the true value of the model parameters,
$$ \sigma^2_\beta(E_T(\boldsymbol{\theta}^{\ast})) = \left[\frac{\partial E_{\boldsymbol{\theta}}[T] }{\partial \boldsymbol{\theta}}\right]^T_{\boldsymbol{\theta} = \boldsymbol{\theta}^{\ast}} \boldsymbol{\Sigma}_\beta(\boldsymbol{\theta}^{\ast}) \left[\frac{\partial E_{\boldsymbol{\theta}}[T] }{\partial \boldsymbol{\theta}}\right]_{\boldsymbol{\theta} = \boldsymbol{\theta}^{\ast}},$$
with $\boldsymbol{\Sigma}_\beta(\boldsymbol{\theta}) $ being defined in Equation (\ref{eq:Sigmamatrix}) 
and 
\begin{equation}
	\begin{aligned}
	\left[\frac{\partial E_{\boldsymbol{\theta}}[T] }{\partial \boldsymbol{\theta}}\right]_{\boldsymbol{\theta} = \boldsymbol{\theta}^{\ast}} =&\frac{1}{2}E_T(\boldsymbol{\theta}^{\ast})
	\left(\frac{\gamma^{\ast}_0}{\gamma^{\ast}_1} \exp(+a^{\ast}_1x_0), \frac{1}{\gamma^{\ast}_1}\left[-1 - \frac{\gamma_0^2}{\gamma_1}\exp(+a^{\ast}_1x_0)\right]\right.,\\
	&\hspace{2cm}
	\left. \left[ -1 + \frac{(\gamma^{\ast}_0)^2}{\gamma^{\ast}_1}\exp(+a^{\ast}_1x_0)\right]x_0 \right)^T.
\end{aligned}
\end{equation}
Explicit derivation of the above derivatives are given in Appendix \ref{app:meanlinear}. As the MDPDE $\widehat{\boldsymbol{\theta}}$ is a consistent estimator of the true model parameter, the standard error of the estimate can be approximated by  $\sigma^2_\beta(E_{\widehat{\boldsymbol{\theta}}_\beta}[T])$

Now, based on the asymptotic distribution of the MDPDE of the mean lifetime, an approximate $100(1-\alpha)\%$-confidence interval for the mean lifetime can be obtained as
$$IC_{1-\alpha}\left[E_{\boldsymbol{\theta}}[T] \right] = \left[ E_{\widehat{\boldsymbol{\theta}}_\beta}[T] \pm z_{\alpha/2} \sigma_\beta(E_{\widehat{\boldsymbol{\theta}}_\beta}[T])\right].$$
Because the mean lifetime is a positive quantity, if the lower bound of the approximate confidence interval is negative, it should be truncated to $0.$ 

\subsubsection{Reliability}

Estimating the reliability of the product at a certain mission time may help manufacturers to ensure product performance, safety or cost-effectiveness of the product at a particular time horizon of interest.
Let us consider the reliability function of the product at constant stress $x_0$ under the assumption of linear baseline hazard function as in (\ref{eq:reliabilitylinearhazard}). The estimated reliability of the product based on the MDPDE $\widehat{\boldsymbol{\theta}}_\beta$ is then given by
\begin{equation} \label{eq:estimatedreliabilitylinearhazard}
	R_{\widehat{\boldsymbol{\theta}}_\beta}(t_0) = 
		\exp\left(-\exp\left(\widehat{a}_1^\beta x_0\right) \left( \widehat{\gamma}^\beta_0t_0 + \widehat{\gamma}^\beta_1 \frac{t_0^2}{2}\right)\right), \hspace{0.5cm} 0 < t < \infty,
\end{equation}
where $t_0$ denotes the fixed mission time. 

To derive approximate confidence intervals for the product's reliability at a specific mission time, we can employ the asymptotic distribution of the function $R_{\widehat{\boldsymbol{\theta}}_\beta}(t_0).$ This distribution can be derived using the Delta method as follows:
$$
\sqrt{N}\left(R_{\widehat{\boldsymbol{\theta}}_\beta}(t_0)- R_{\boldsymbol{\theta}^\ast}(t_0) \right) \xrightarrow[N\rightarrow \infty]{L} \mathcal{N} \left(\boldsymbol{0}, \sigma_\beta(R_{\boldsymbol{\theta}^\ast}(t_0))\right),
$$
where $\boldsymbol{\theta}^{\ast} = (\gamma_0^{\ast}, \gamma_1^{\ast}, a_1^{\ast})$ is the true value of the model parameters,
$$ \sigma^2_\beta(R_{\boldsymbol{\theta}^\ast}(t_0)) = \left[\frac{\partial R_{\boldsymbol{\theta}}(t_0) }{\partial \boldsymbol{\theta}}\right]^T_{\boldsymbol{\theta} = \boldsymbol{\theta}^{\ast}} \boldsymbol{\Sigma}_\beta(\boldsymbol{\theta}^{\ast}) \left[\frac{\partial R_{\boldsymbol{\theta}}(t_0) }{\partial \boldsymbol{\theta}}\right]_{\boldsymbol{\theta} = \boldsymbol{\theta}^{\ast}},$$
with $\boldsymbol{\Sigma}_\beta(\boldsymbol{\theta}) $ being as defined in Equation (\ref{eq:Sigmamatrix}) 
and 
\begin{equation}
	\frac{\partial R_{\boldsymbol{\theta}}(t_0)}{\partial \boldsymbol{\theta}} = -R_{\boldsymbol{\theta}}(t_0)\exp\left(a_0^\beta + a_1 x_0\right)
	\left(   t_0,  \frac{t_0^2}{2},  \left( \gamma_0t_0 + \gamma_1 \frac{t_0^2}{2}\right)x_0 \right)^T.
\end{equation} 

The standard error of the reliability estimate, denoted above as $\sigma_\beta(R_{\boldsymbol{\theta}^\ast}(t_0)),$ can be robustly and consistently estimated using a MDPDE. We can then build an approximate $100(1-\alpha)\%$ confidence interval as follows:
$$IC_{1-\alpha}\left[ R_{\boldsymbol{\theta}}(t_0) \right] = \left[ R_{\widehat{\boldsymbol{\theta}}_\beta}(t_0) \pm z_{\alpha/2} \sigma_\beta(R_{\widehat{\boldsymbol{\theta}}_\beta}(t_0))\right],$$
where $z_{\alpha}$ denotes the upper $\alpha$ quantile of a standard normal distribution.
Given that reliability values fall within the interval $[0,1]$, if  either of the approximated interval bounds exceed this range, they should be truncated accordingly.

\subsubsection{Quantiles} \label{sec:char_quantile_linear}

Certain distribution quantiles, particularly those representing the lower and upper tails, offer valuable insights into the characteristics of the tails of the lifetime distribution. Besides, the median life serves as a crucial measure of central tendency, as  it gets less affected by extreme values, unlike the mean lifetime. This makes the median a more robust measure of centrality than the mean lifetime to failure. Hence, estimating quantiles for the lower tail, upper tail, and central tendency can help manufacturers in comprehending the overall behaviour of the lifetimes of the products. Here, we will develop robust estimators for an $\alpha$-quantile based on the MDPDEs.

The $\alpha$-quantile of the lifetime distribution represents the time at which $100\alpha\%$ of the devices would have failed. Under the PH hazard assumption with linear baseline function and constant stress $x_0$, the quantile function is obtained by inverting the distribution function in (\ref{eq:cdflinear}), that is,
$
	Q_{\boldsymbol{\theta}}(\alpha) = F_T^{-1}(\alpha).
$
Therefore, equating the distribution function to $\alpha$, we obtain the $\alpha$-quantile function to satisfy the following equation:
\begin{equation}  \label{eq:quantileec}
	\frac{\gamma_1 }{2}Q_{\boldsymbol{\theta}}(\alpha)^2 + \gamma_0Q_{\boldsymbol{\theta}}(\alpha) + \log(\alpha)\exp\left( -a_1^\beta x_0\right) = 0. 
\end{equation}

Note that the  above equation  always has a real solution due to the constraint $0 < \alpha < 1,$ which ensures that the term $\log(\alpha)$ is negative. Moreover, all remaining quantities $\gamma_0$, $\gamma_1$ and $\exp(a_1x_0)$ are positive, and so  the discriminant of the second-order solution formula is positive and the solutions of the equation are therefore real. Then, solving for $Q_{\boldsymbol{\theta}}(\alpha),$ we explicitly obtain the $\alpha$-quantile as
\begin{equation}
	Q_{\boldsymbol{\theta}}(\alpha) = \frac{-\gamma_0 + \sqrt{\gamma_0^2-2\gamma_1 \log(\alpha)\exp\left( -a_1^\beta x_0\right)}}{\gamma_1}.
\end{equation}
Although a second-order equation may have up to two solutions, in the context of quantiles where the result should be a positive quantity, we have selected the unique positive solution provided by the above equation.

Now, applying the delta-method, we can readily stablish the asymptotic distribution of the $\alpha-$quantile as follows:
$$
\sqrt{N}\left(Q_{\widehat{\boldsymbol{\theta}}_\beta}(\alpha)- Q_{\boldsymbol{\theta}^\ast}(\alpha) \right) \xrightarrow[N\rightarrow \infty]{L} \mathcal{N} \left(\boldsymbol{0}, \sigma_\beta(Q_{\boldsymbol{\theta}^\ast}(\alpha))\right),
$$
where $\boldsymbol{\theta}^{\ast} = (\gamma_0^{\ast}, \gamma_1^{\ast}, a_1^{\ast})$ is the true value of the model parameters,
$$ \sigma^2_\beta(Q_{\boldsymbol{\theta}^\ast}(\alpha)) = \left[\frac{\partial Q_{\boldsymbol{\theta}}(\alpha) }{\partial \boldsymbol{\theta}}\right]^T_{\boldsymbol{\theta} = \boldsymbol{\theta}^{\ast}} \boldsymbol{\Sigma}_\beta(\boldsymbol{\theta}^{\ast}) \left[\frac{\partial Q_{\boldsymbol{\theta}}(\alpha) }{\partial \boldsymbol{\theta}}\right]_{\boldsymbol{\theta} = \boldsymbol{\theta}^{\ast}},$$
with $\boldsymbol{\Sigma}_\beta(\boldsymbol{\theta}) $ being as defined in Equation (\ref{eq:Sigmamatrix}) 
and 
\begin{equation}
	\frac{\partial Q_{\boldsymbol{\theta}}(\alpha)}{\partial \boldsymbol{\theta}} =  - \frac{ Q_{\boldsymbol{\theta}}(\alpha) }{k_1 (Q_{\boldsymbol{\theta}}(\alpha))}\left(1,  \frac{Q_{\boldsymbol{\theta}}(\alpha)}{2}, k_2(Q_{\boldsymbol{\theta}}(\alpha))x_0 \right)^T,
\end{equation}
with $k_1(t) = \gamma_1t+\gamma_0 $ and $ k_2(t) = \frac{\gamma_1}{2}t + \gamma_0.$

The explicit forms of the derivatives of the $\alpha$-quantile have been obtained by taking implicit derivatives of Equation (\ref{eq:quantileec}) with respect to each parameter and subsequently solving each corresponding equation. Detailed calculations are provided in Appendix \ref{app:quantilelinear}.
Estimating the standard errors of the estimates by plugging-in the MDPDE as before,  an approximate $100(1-\alpha)\%$ confidence interval can be given as follows:
$$IC_{1-\alpha}\left[Q_{\boldsymbol{\theta}}(\alpha_0) \right] = \left[ Q_{\widehat{\boldsymbol{\theta}}_\beta}(\alpha_0) \pm z_{\alpha/2} \sigma_\beta(Q_{\widehat{\boldsymbol{\theta}}_\beta}(\alpha_0))\right],$$
where $z_{\alpha}$ denotes the upper $\alpha$ quantile of a standard normal distribution.
As the domain of the distribution function is restricted to positive values, any approximate lower bound that falls below zero needs to be be truncated. 

\subsection{Quadratic baseline hazard} \label{sec:char_quadratic}

We will now discuss in detail the case of quadratic baseline hazard. Estimating the lifetime characteristics under quadratic baseline hazard is a more complex task, as not all characteristics have an explicit expression, and they need to be computed numerically. Consequently, while adopting a quadratic lifetime assumption provides a more flexible statistical model, the estimation of lifetime characteristics poses a big challenge. 
Let us assume a quadratic hazard function as in Equation (\ref{eq:baselinehazardquadratic}), with unknown parameters $\boldsymbol{\theta} = \left(\gamma_0, \gamma_1, \gamma_2, a_1 \right).$ The cumulative distribution function of the lifetime $T$ under a constant stress $x_0$ is then given by
	\begin{equation} \label{eq:cdfquadratic}
	F_T(t) = 	1- \exp\left(-\exp\left( a_1 x_0\right) \left( \gamma_0t + \gamma_1 \frac{t^2}{2} + \gamma_2 \frac{t^3}{3}\right)\right), \hspace{0.3cm} 0 < t < \infty,
\end{equation}
where $\exp\left( a_1 x_0\right)$ represents the multiplicative effect of the stress level.  
Consequently, the probability density function of the lifetime is given  by
\begin{equation} \label{eq:pdfquadratic}
	f_T(t) = 	\exp\left( a_1 x_0\right) \left( \gamma_0 + \gamma_1 t + \gamma_2 t^2 \right) \exp\left(-\exp\left( a_1 x_0\right) \left( \gamma_0t + \gamma_1 \frac{t^2}{2} + \gamma_2 \frac{t^3}{3}\right)\right).
\end{equation}

\subsubsection{Mean lifetime to failure}

The mean lifetime of a device under the quadratic hazard assumption and constant stress level $x_0$ can be computed as
\begin{equation} \label{eq:ET}
	E_{\boldsymbol{\theta}}[T] = \exp\left( a_1 x_0\right) \int_{0}^{\infty}  \left( \gamma_0t + \gamma_1 t^2 + \gamma_2 t^3 \right) \exp\left(-\exp\left( a_1 x_0\right) \left( \gamma_0t + \gamma_1 \frac{t^2}{2} + \gamma_2 \frac{t^3}{3}\right)\right) dt.
\end{equation}

The definite integral in (\ref{eq:ET}) does not have a closed form and so, given a certain vector model parameter,  it needs to be estimated numerically.
In particular, the MDPDE of the mean lifetime for a fixed $\beta,$ $E_{\widehat{\boldsymbol{\theta}}_\beta}[T],$ can be obtained by plugging-in the MDPDE estimator $\widehat{\boldsymbol{\theta}}_\beta$ into the previous expression and subsequently approximating its value numerically.
To compute the improper integrals with infinite domain through Monte Carlo techniques, the infinite integrand interval can be mapped to a finite interval  and then  any regular numerical quadrature routine may be used for the modified finite integral, or use Gaussian quadrature rules.
Although the mean lifetime to failure does not have an explicit form, it is a differentiable function of $\boldsymbol{\theta}$ and its derivatives can be computed using derivative under integral sign rules. Therefore,
its asymptotic distribution can be obtained through the Delta-method, as in the linear case. In particular, we can establish the following asymptotic result:
$$
\sqrt{N}\left(E_{\widehat{\boldsymbol{\theta}}_\beta}[T]- E_{\boldsymbol{\theta}^\ast}[T] \right) \xrightarrow[N\rightarrow \infty]{L} \mathcal{N} \left(\boldsymbol{0}, \sigma_\beta E_{\boldsymbol{\theta}^\ast}[T]\right),
$$
where $\boldsymbol{\theta}^{\ast} = (\gamma_0^{\ast}, \gamma_1^{\ast}, a_0^{\ast}, a_1^{\ast})$ is the true value of the model parameters,
$$ \sigma^2_\beta(E_{\boldsymbol{\theta}^\ast}[T]) = \left[\frac{\partial E_{\boldsymbol{\theta}}[T] }{\partial \boldsymbol{\theta}}\right]^T_{\boldsymbol{\theta} = \boldsymbol{\theta}^{\ast}} \boldsymbol{\Sigma}_\beta(\boldsymbol{\theta}^{\ast}) \left[\frac{\partial E_{\boldsymbol{\theta}}[T] }{\partial \boldsymbol{\theta}}\right]_{\boldsymbol{\theta} = \boldsymbol{\theta}^{\ast}},$$
with $\boldsymbol{\Sigma}_\beta(\boldsymbol{\theta}) $ being as in Equation (\ref{eq:Sigmamatrix}) 
and $\frac{\partial E_{\boldsymbol{\theta}}[T] }{\partial \boldsymbol{\theta}} = \left(\frac{\partial E_{\boldsymbol{\theta}}[T] }{\partial \gamma_0}, \frac{\partial E_{\boldsymbol{\theta}}[T] }{\partial \gamma_1}, \frac{\partial E_{\boldsymbol{\theta}}[T] }{\partial \gamma_2}, \frac{\partial E_{\boldsymbol{\theta}}[T] }{\partial a_1} \right)^T$
is the vector of derivatives with
\begin{equation}
	\begin{aligned}
	\frac{\partial E_{\boldsymbol{\theta}}[T] }{\partial \gamma_0} & = \exp\left( a_1 x_0\right) \int_{0}^{\infty}  t(1-th_{\boldsymbol{\theta}}(t))R_{\boldsymbol{\theta}}(t)dt, \\
	\frac{\partial E_{\boldsymbol{\theta}}[T] }{\partial \gamma_1} & = \exp\left( a_1 x_0\right) \int_{0}^{\infty}  t^2(1-\frac{t}{2})h_{\boldsymbol{\theta}}(t) )R_{\boldsymbol{\theta}}(t)dt,\\
	\frac{\partial E_{\boldsymbol{\theta}}[T] }{\partial \gamma_2} & = \exp\left( a_1 x_0\right) \int_{0}^{\infty}  t^3(1-\frac{t}{3}h_{\boldsymbol{\theta}}(t))R_{\boldsymbol{\theta}}(t)dt,\\
	%
	%
	\frac{\partial E_{\boldsymbol{\theta}}[T] }{\partial a_1} &=   (1-\exp\left( a_1 x_0\right)) E_{\boldsymbol{\theta}}[T]  x_0.
	\end{aligned}
\end{equation}
Detailed calculations of the above derivatives are given in Appendix \ref{app:meanquadratic}.
Given a MDPDE $\widehat{\boldsymbol{\theta}}_\beta,$ we can numerically approximate 
the estimated mean lifetime $E_{\widehat{\boldsymbol{\theta}}_\beta}[T]$ and estimated standard error
$\sigma_\beta(E_{\widehat{\boldsymbol{\theta}}_\beta}[T])$ and consequently
 an approximate $100(1-\alpha)\%$-confidence interval for the mean lifetime can be given as
$$IC_{1-\alpha}\left[E_{\boldsymbol{\theta}}[T] \right] = \left[ E_{\widehat{\boldsymbol{\theta}}_\beta}[T] \pm z_{\alpha/2} \sigma_\beta(E_{\widehat{\boldsymbol{\theta}}_\beta}[T])\right].$$
Again, it's important to note that the mean lifetime is a positive value. Therefore, if the lower bound of the approximate confidence interval happens to be negative, it should be  truncated to a minimum value of 0.

	\subsubsection{Reliability}
	
	The reliability function of the lifetime of the device under quadratic hazard at a mission time $t_0$ and constant stress level $x_0$ is given by
	\begin{equation*} 
		R_{\boldsymbol{\theta}}(t_0) = 	\exp\left(-\exp\left( a_1 x_0\right) \left( \gamma_0t_0 + \gamma_1 \frac{t_0^2}{2} + \gamma_2 \frac{t_0^3}{3}\right)\right).
	\end{equation*}
	Therefore, the MDPDE of the reliability with tuning parameter $\beta$ is defined as
	\begin{equation*} 
		R_{\widehat{\boldsymbol{\theta}}_\beta}(t_0) = 	\exp\left(-\exp\left(\widehat{a}^\beta_1 x_0\right) \left( \widehat{\gamma}^\beta_0t_0 + \widehat{\gamma}^\beta_1 \frac{t_0^2}{2} + \widehat{\gamma}^\beta_2 \frac{t_0^3}{3}\right)\right).
	\end{equation*}
	where $\widehat{\boldsymbol{\theta}}^\beta = (\widehat{\gamma}_0^\beta, \widehat{\gamma}_1^\beta,\widehat{\gamma}_2^\beta,\widehat{a}_1^\beta)^T$ is the MDPDE of the parameter $\boldsymbol{\theta}.$ The properties of the MDPDE for the reliability, $R_{\widehat{\boldsymbol{\theta}}_\beta}(t_0),$ will be inherited from the properties of $\widehat{\boldsymbol{\theta}}^\beta,$  and positive values of the tuning parameter will provide robust estimates of the reliability. Applying the Delta method to the function $R_{\boldsymbol{\theta}}(t_0),$ we obtain
	$$\sqrt{N}\left(R_{\widehat{\boldsymbol{\theta}}_\beta}(t_0)- R_{\boldsymbol{\theta}^\ast}(t_0) \right) \xrightarrow[N\rightarrow \infty]{L} \mathcal{N} \left(\boldsymbol{0}, \sigma_\beta(R_{\boldsymbol{\theta}^\ast}(t_0))\right),
	$$
	where $\boldsymbol{\theta}^{\ast} = (\gamma_0^{\ast}, \gamma_1^{\ast}, a_0^{\ast}, a_1^{\ast})$ is the true value of the model parameters,
	$$ \sigma^2_\beta(R_{\boldsymbol{\theta}^\ast}(t_0)) = \left[\frac{\partial R_{\boldsymbol{\theta}}(t_0) }{\partial \boldsymbol{\theta}}\right]^T_{\boldsymbol{\theta} = \boldsymbol{\theta}^{\ast}} \boldsymbol{\Sigma}_\beta(\boldsymbol{\theta}^{\ast}) \left[\frac{\partial R_{\boldsymbol{\theta}}(t_0) }{\partial \boldsymbol{\theta}}\right]_{\boldsymbol{\theta} = \boldsymbol{\theta}^{\ast}},$$
	with $\boldsymbol{\Sigma}_\beta(\boldsymbol{\theta}) $ being defined in Equation (\ref{eq:Sigmamatrix}) 
	and 
	\begin{equation}
		\frac{\partial R_{\boldsymbol{\theta}}(t_0)}{\partial \boldsymbol{\theta}} = -R_{\boldsymbol{\theta}}(t_0)\exp\left( a_1 x_0\right)
		\left(t_0,  \frac{t_0^2}{2}, \frac{t_0^3}{3},  \left( \gamma_0t_0 + \gamma_1 \frac{t_0^2}{2} \gamma_2 \frac{t_0^3}{3}\right)x_0,   \right)^T.
	\end{equation} 
	 Therefore, we can build an approximate $100(1-\alpha)\%$ confidence interval as 
	$$IC_{1-\alpha}\left[ R_{\boldsymbol{\theta}}(t_0) \right] = \left[ R_{\widehat{\boldsymbol{\theta}}_\beta}(t_0) \pm z_{\alpha/2} \sigma_\beta(R_{\widehat{\boldsymbol{\theta}}_\beta}(t_0))\right],$$
	where $z_{\alpha}$ denotes the upper $\alpha$ quantile of a standard normal distribution and $\sigma_\beta(R_{\widehat{\boldsymbol{\theta}}_\beta}(t_0))$ is a robust (for $\beta >0$) and consistent estimator of the standard error.
	As before, the approximate interval bounds need to be truncated to $[0,1]$ if their estimated values exceed these values, as the reliability of a device can not exceed these limits.

	\subsubsection{Quantiles} \label{sec:char_quantile_quadratic}
	
	The upper $\alpha$-quantile of the cumulative distribution function under quadratic baseline hazard, denoted by $Q_{\boldsymbol{\theta}}(\alpha),$ should satisfy the equation
	\begin{equation*} 
		1- \alpha  = 	1- \exp\left(-\exp\left( a_1 x_0\right) \left( \gamma_0Q_{\boldsymbol{\theta}}(\alpha) + \gamma_1 \frac{Q_{\boldsymbol{\theta}}(\alpha)^2}{2} + \gamma_2 \frac{Q_{\boldsymbol{\theta}}(\alpha)^3}{3}\right)\right),
	\end{equation*}
	and so, after some algebra, we can establish that the upper $\alpha$-quantile of the lifetime must satisfy the equation 
	\begin{equation*} 
	    \frac{ \gamma_2}{3}Q_{\boldsymbol{\theta}}(\alpha)^3 + \frac{\gamma_1}{2} Q_{\boldsymbol{\theta}}(\alpha)^2 +\gamma_0Q_{\boldsymbol{\theta}}(\alpha)  + \log(\alpha) \exp\left(- a_1 x_0\right) = 0.
	\end{equation*}
	This equation may have up to three real solutions (and at least one). We will choose the least positive real solution. 
	While we do not have  an explicit formula for the $\alpha$-quantile, it is a differentiable function with respect to $\boldsymbol{\theta}$ and so 
	we can apply the Delta-method to derive its asymptotic distribution, yielding  the  result:
	$$
	\sqrt{N}\left(Q_{\widehat{\boldsymbol{\theta}}_\beta}(\alpha)- Q_{\boldsymbol{\theta}^\ast}(\alpha) \right) \xrightarrow[N\rightarrow \infty]{L} \mathcal{N} \left(\boldsymbol{0}, \sigma_\beta(Q_{\boldsymbol{\theta}^\ast}(\alpha))\right),
	$$
	where $\boldsymbol{\theta}^{\ast} = (\gamma_0^{\ast}, \gamma_1^{\ast}, a_0^{\ast}, a_1^{\ast})$ is the true value of the model parameters,
	$$ \sigma^2_\beta(Q_{\boldsymbol{\theta}^\ast}(\alpha)) = \left[\frac{\partial Q_{\boldsymbol{\theta}}(\alpha) }{\partial \boldsymbol{\theta}}\right]^T_{\boldsymbol{\theta} = \boldsymbol{\theta}^{\ast}} \boldsymbol{\Sigma}_\beta(\boldsymbol{\theta}^{\ast}) \left[\frac{\partial Q_{\boldsymbol{\theta}}(\alpha) }{\partial \boldsymbol{\theta}}\right]_{\boldsymbol{\theta} = \boldsymbol{\theta}^{\ast}},$$
	with $\boldsymbol{\Sigma}_\beta(\boldsymbol{\theta}) $ begin as defined  in Equation (\ref{eq:Sigmamatrix}) 
	and 
	\begin{equation}
		\frac{\partial Q_{\boldsymbol{\theta}}(\alpha)}{\partial \boldsymbol{\theta}} =  - \frac{ Q_{\boldsymbol{\theta}}(\alpha) }{k_1 (Q_{\boldsymbol{\theta}}(\alpha))}\left(1,  \frac{Q_{\boldsymbol{\theta}}(\alpha)}{2}, \frac{Q_{\boldsymbol{\theta}}(\alpha)^2}{3}, k_2(Q_{\boldsymbol{\theta}}(\alpha))x_0 \right)^T,
	\end{equation}
	with $k_1(t) = \gamma_2t^2+\gamma_1t+\gamma_0 $ and $ k_2(t) = \frac{\gamma_2}{3}t^2 + \frac{\gamma_1}{2}t + \gamma_0.$
	Details of the calculations are presented in Appendix \ref{app:quantilequadratic}. Further, using the asymptotic distribution of the $\alpha_0$-quantile and the consistency of the MDPDE, a $100(1-\alpha)\%$ confidence interval can be given as
	$$IC_{1-\alpha}\left[Q_{\boldsymbol{\theta}^\ast}(\alpha_0) \right] = \left[ Q_{\widehat{\boldsymbol{\theta}}_\beta}(\alpha_0) \pm z_{\alpha/2} \sigma_\beta(Q_{\widehat{\boldsymbol{\theta}}_\beta}(\alpha_0))\right].$$
	where $z_{\alpha}$ denotes the upper $\alpha$ quantile of a standard normal distribution.
	Again, the lower bound may be truncated if necessary.
	
	It is worthy to note the similarity of the formulas for the linear and quadratic baseline hazard cases. Indeed, if we set $\gamma_2=0$ and suppress the components corresponding to that parameter in all vector and matrices involved, we would recover the formulas presented for the linear case earlier in Section \ref{sec:char_linear}.

	\section{Monte Carlo simulation study \label{sec:simulation}}
	
	We examine the performance of the proposed MDPDEs through an extensive simulation study, considering both linear and quadratic baseline functions. We evaluate the accuracy of the estimates in terms of their mean squared error (MSE),  computed as
	\begin{equation*}
		MSE(\widehat{\gamma}^\beta_i) = ||\widehat{\gamma}^\beta_i-\gamma_i ||_2, \hspace{0.3cm} i=0,1,2, \hspace{0.3cm} \text{ and }  \hspace{0.3cm}	MSE(\widehat{a}^\beta_1) = ||\widehat{a}^\beta_1-a_1 ||_2.
	\end{equation*}
	Moreover, we examine the accuracy of the estimated lifetime characteristics based on the MDPDE, which will inherit the properties of the estimators. The accuracy of the estimation of the lifetime characteristics is also measured in terms of their MSE.
	
	To generate the failure counts, we consider a simple step-stress ALT assuming PH lifetimes and we then generate the count of failure within the interval $(t_{j-1}, t_j]$ using a conditional binomial distribution with probability  given by
	$$\widetilde{\pi}_j(\boldsymbol{\theta}) = \frac{\pi_j(\boldsymbol{\theta})}{1-G_T(t_j)},$$
	where $\pi_j(\boldsymbol{\theta})$ is the multinomial probability defined in Equation (\ref{eq:th}) for linear and Equation (\ref{eq:thquadratic}) for quadratic baseline hazard functions. Although both conditional binomial and multinomial models are equivalent, we use the first to generate the data so we can increase the probability of failure in one cell, and simulate the effect on the subsequent inspections.

	To evaluate the robustness of the estimators, we introduce some contamination in data.
	Generating outlying failure times may not necessarily result in an increased count of failures deemed as outliers. So, in contrast to the notion of outliers based on individual observations, \cite{barnett1992unusual} and \cite{victoria1997robust} pointed out that an outlier from grouped data is identified within a class whose associated probability (according to the underlying model) is notably smaller than the observed frequency.  This can be formalized by defining the ``adjusted residuals'' (see also \cite{fuchs1980test}) as follows:
	$$ r_j = \sqrt{N}\frac{\widehat{p}_j- \pi_j(\boldsymbol{\theta})}{\sqrt{\pi_j(\boldsymbol{\theta})(1-\pi_j(\boldsymbol{\theta}))}},$$
	 where $\pi_j(\boldsymbol{\theta})$  is the expected probability of class $j$ according to the underlying model and $\widehat{p}_j = n_j/N$ is the observed relative frequency in class $j$. Because $\pi_j(\boldsymbol{\theta})$ depends on the unknown model parameter $\boldsymbol{\theta}$, a robust estimation of the parameter is important to avoid the inflation of the denominator of the residual and the consequent misleading effect.
	 Due to the step-stress scheme in the generation of the failure times, increasing or decreasing the probability of failure in a cell will also influence the observed probabilities in the subsequent cells, but not the observed failures of the previous ones. Therefore, the adjusted residuals will increase at all cells from the contaminated cell onward.
		
	On the other hand, as all baseline hazard parameters are assumed to be positive, we reparameterize the hazard parameters for easier computation by exponentiating them, i.e., $\gamma_i' = \log(\gamma_i)$ and the hazard function coefficients are then $\exp(\boldsymbol{\gamma})$. This ensures that all estimated parameters are positive. If a parameter estimated is lower than $10^{-7}$, we considered it as zero.
	
	\subsection{Linear baseline  hazard}
	
	We consider linear hazard with true parameter vector $\boldsymbol{\theta}_0= (\exp(-4), \exp(-5.3), 0.5) $ and inspection times $2,  4,  6,  8, 10, 12, 14, 16, 18, 20$ and $22.$ The stress in increased at $\tau_1=14$ from $x_1=0.5$ to $x_2=2.5.$
	The hazard function of the true underlying model at constant stress $x$ is then
	$$h(t,x) = -\exp(0.5x)(\exp(-4)+ \exp(-5.3)t).$$
	We contaminate the model by increasing the probability of failure within the $10$-th cell by $(1+\varepsilon)$ times its probability, which will result in the decrease of failures in the subsequent cells. Figure \ref{fig:residualslinear} shows the increase on the residuals for the contaminated cells for increasing contamination. Note that the percentage of contaminated cells remains constant at $27\%,$ but the strength of contamination increases with $\varepsilon.$
	\begin{figure}[htb]
		\centering
		\includegraphics[width=9cm, height=5cm]{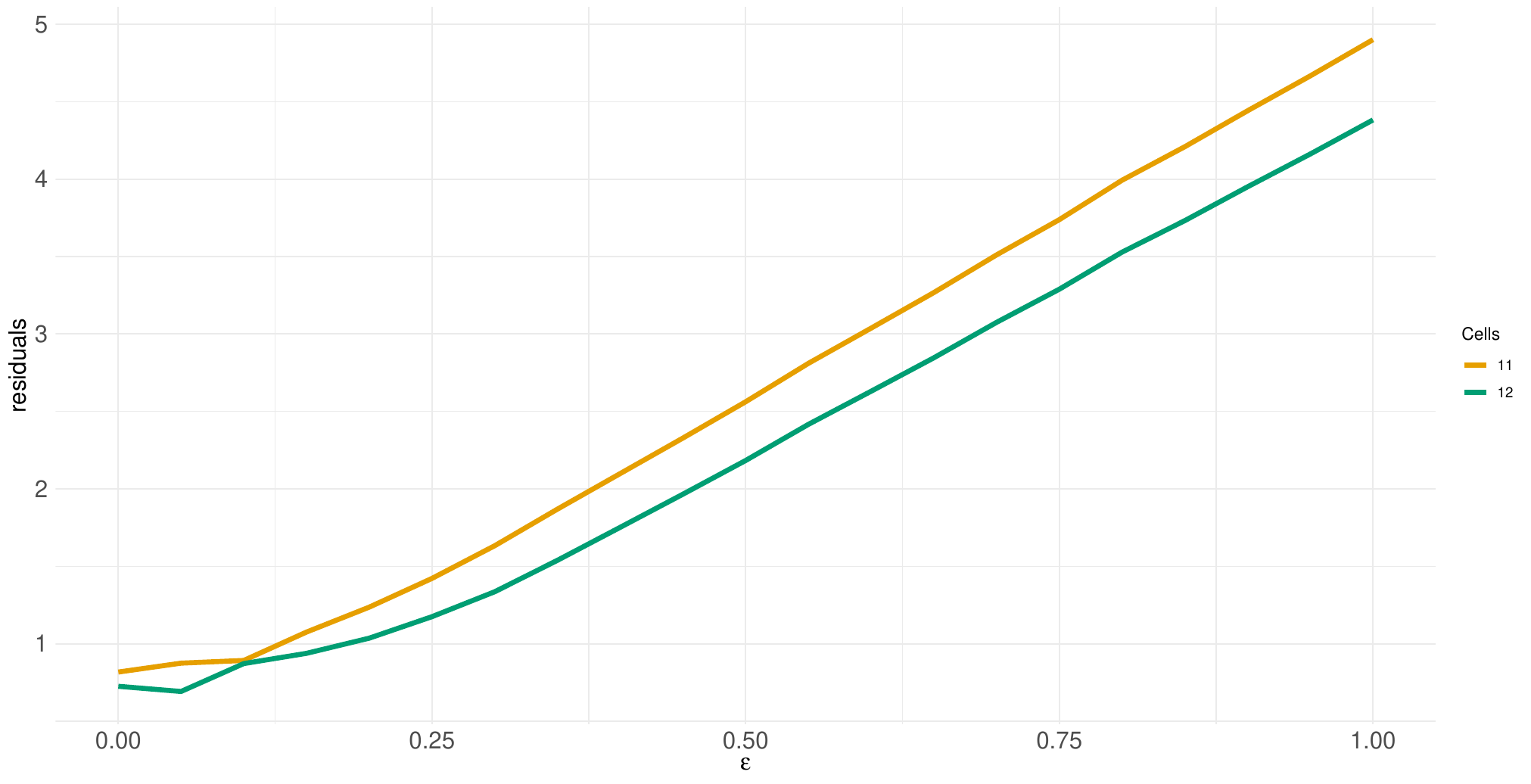}
		\caption{Residuals of contaminated cells under linear baseline hazard}
		\label{fig:residualslinear}
	\end{figure}
	
	Figure \ref{fig:MSElinear} presents the RMSE on the estimation of the three model parameters for increasing contamination of the model. As seen there, the MLE outperforms all its competitors in the absence of contamination, but rapidly worsens with contamination. In contrast, the MDPDE with positive values of $\beta$ are competitive to the MLE in the absence of contamination and are also not  heavily  influenced by contamination in data.
		\begin{figure}[htb]
		\centering
		\begin{subfigure}{0.3\textwidth}
			\includegraphics[scale=0.3]{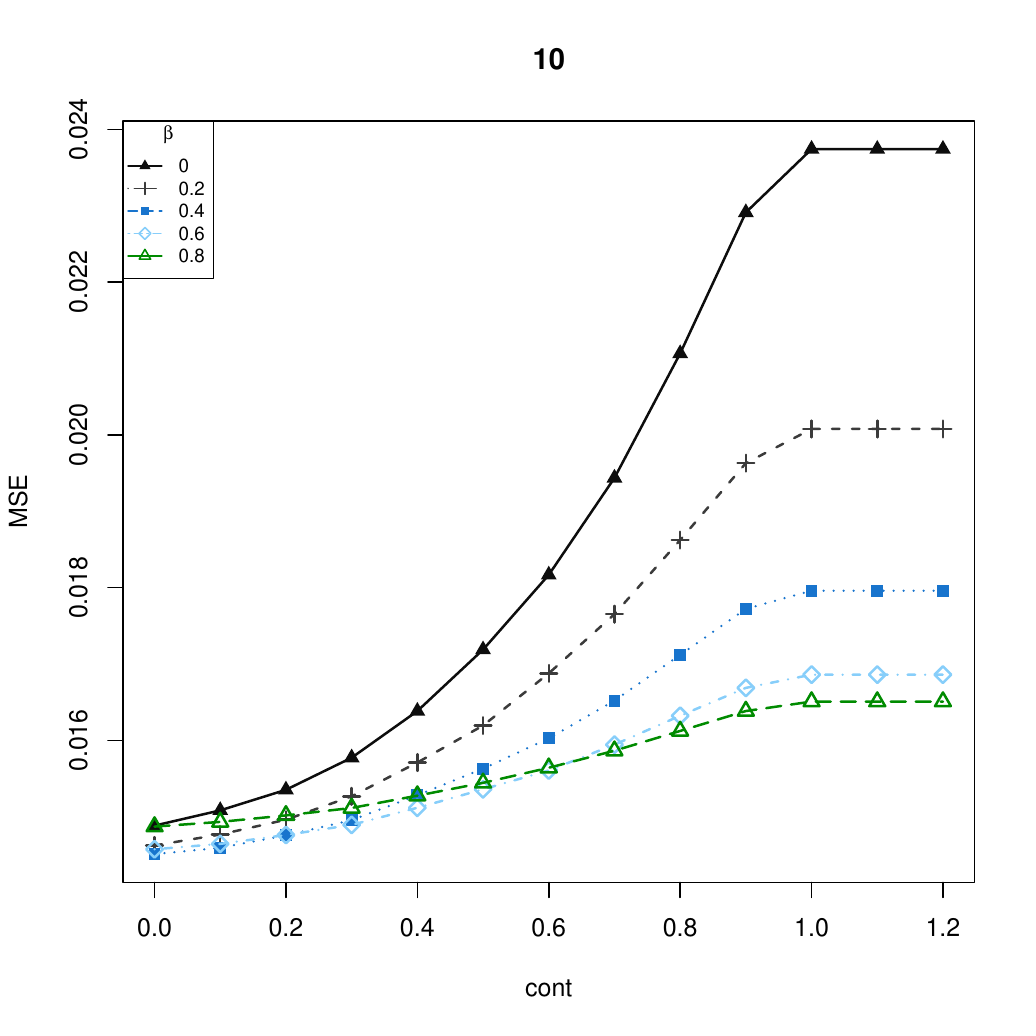}
			\subcaption{RMSE($\gamma_0$)}
		\end{subfigure}
		\begin{subfigure}{0.3\textwidth}
			\includegraphics[scale=0.3]{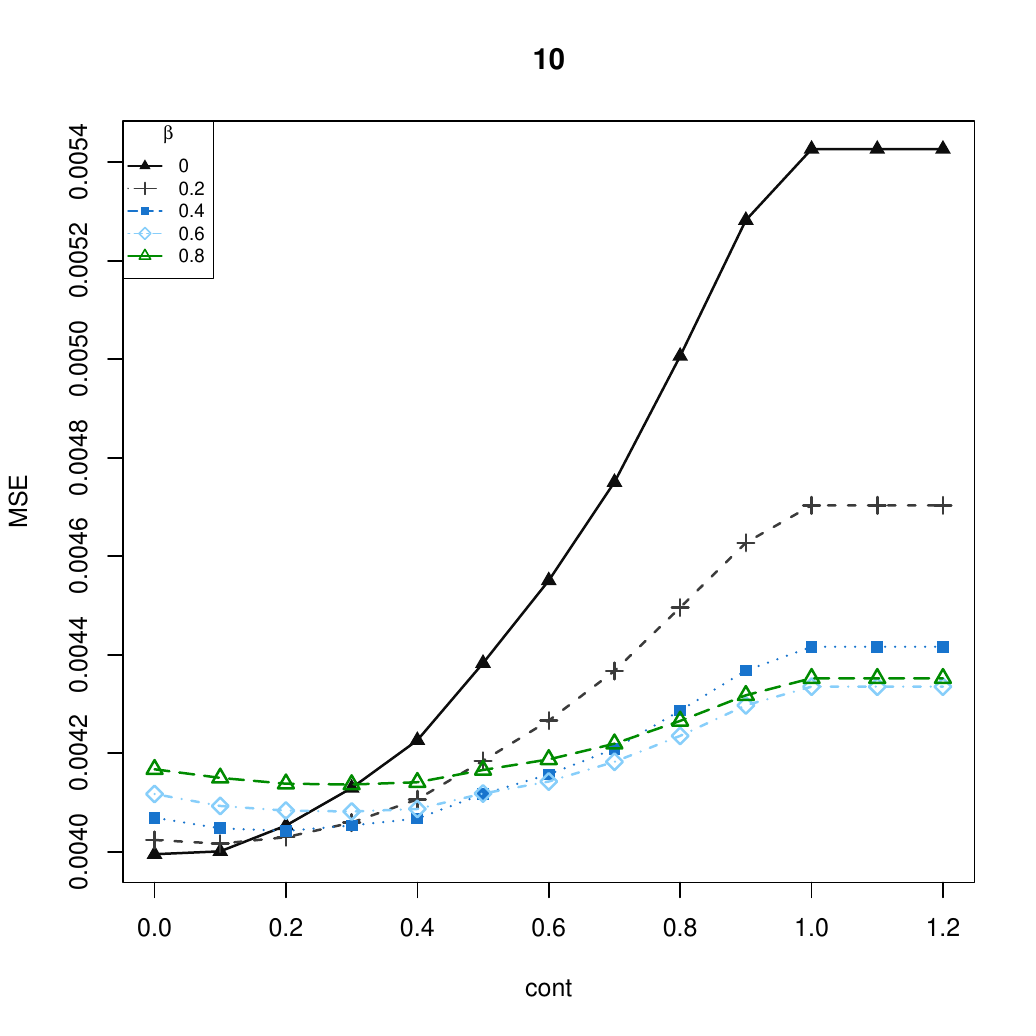}
			\subcaption{RMSE($\gamma_1$)}
		\end{subfigure}\begin{subfigure}{0.3\textwidth}
			\includegraphics[scale=0.3]{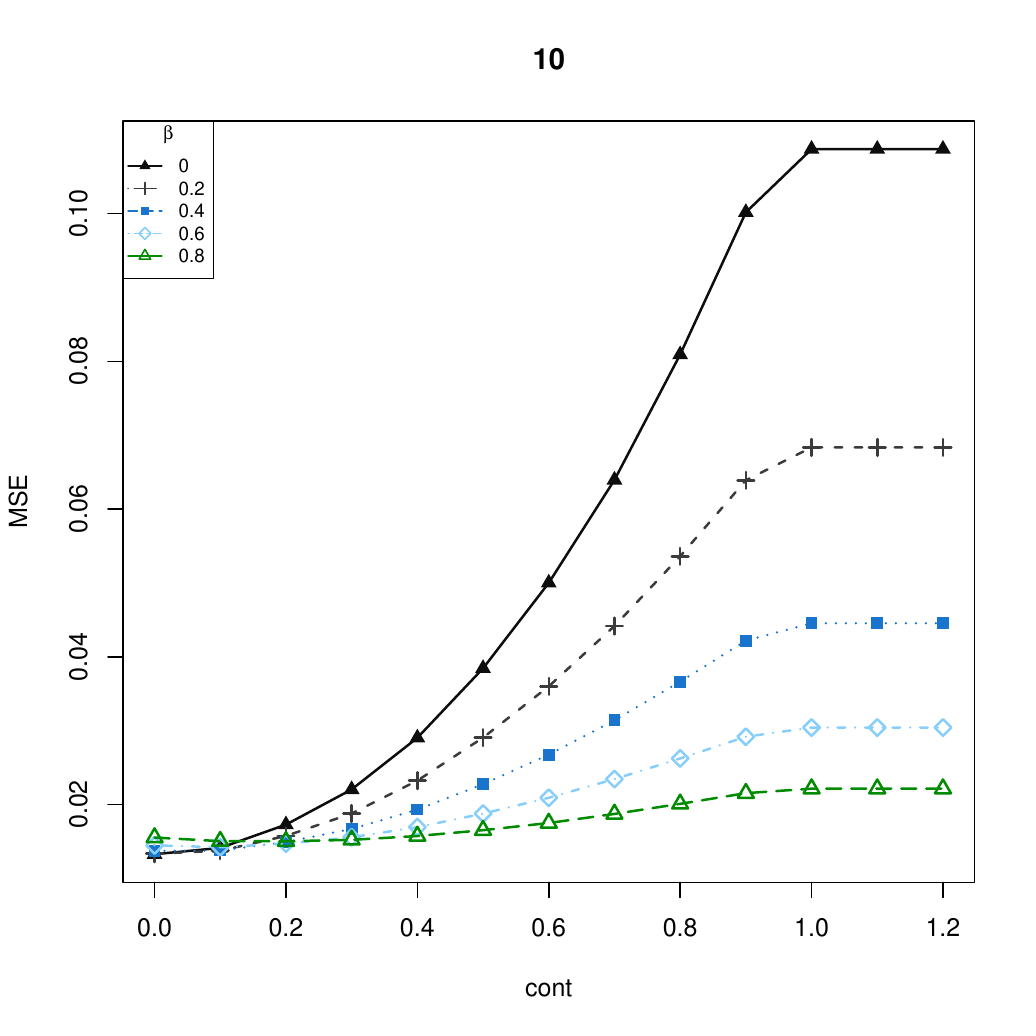}
			\subcaption{RMSE($a_1$)}
		\end{subfigure}
		\caption{Root mean squared error (RMSE) of the estimates under linear baseline hazard}
		\label{fig:MSElinear}
	\end{figure}
	 Recall that the MDPDE estimating equations defined in (\ref{eq:estimatedreliabilitylinearhazard}) down-weigh the observations with low probability (say $\pi$) by a factor of $\pi^\beta.$ Because the theoretical probability of failing within the last  two intervals is quite low, the MDPDE gets less influenced by an abnormally high number of failures.

	The performance of the estimators would naturally influence  the accuracy of the corresponding lifetime characteristics, which are of natural interest in many reliability analysis.
	For example, one may be interested in inferring specific quantiles, mean lifetime, hazard rate or the reliability of the product under NOC  at certain mission times.
	Figure \ref{fig:MSE_char_linear} presents the accuracy on the estimation of the median ($50\%$-quantile), mean lifetime, hazard rate and reliability at time $t_0=5$ under a lower stress (set at $x_0=0.3$ for illustrative purposes) representing the NOC.
	All efficiency and robustness properties of the MDPDEs are inherited by the estimates of the lifetime characteristics  and all estimates based on the MLE are seen to be heavily influenced by contamination present in the data.
	\begin{figure}[H]
	\centering
	\begin{subfigure}{0.35\textwidth}
		\includegraphics[scale=0.3]{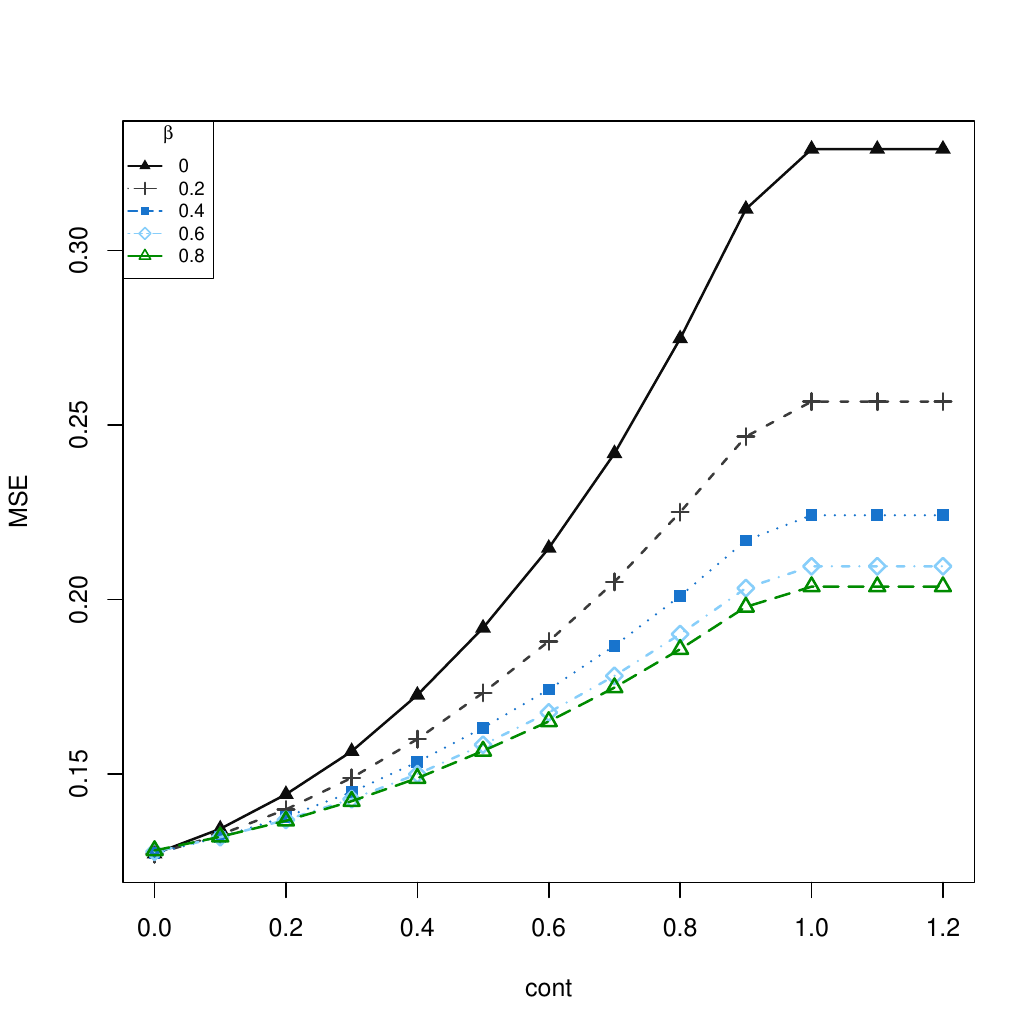}
		\subcaption{RMSE of the median}
	\end{subfigure}
	\begin{subfigure}{0.35\textwidth}
		\includegraphics[scale=0.3]{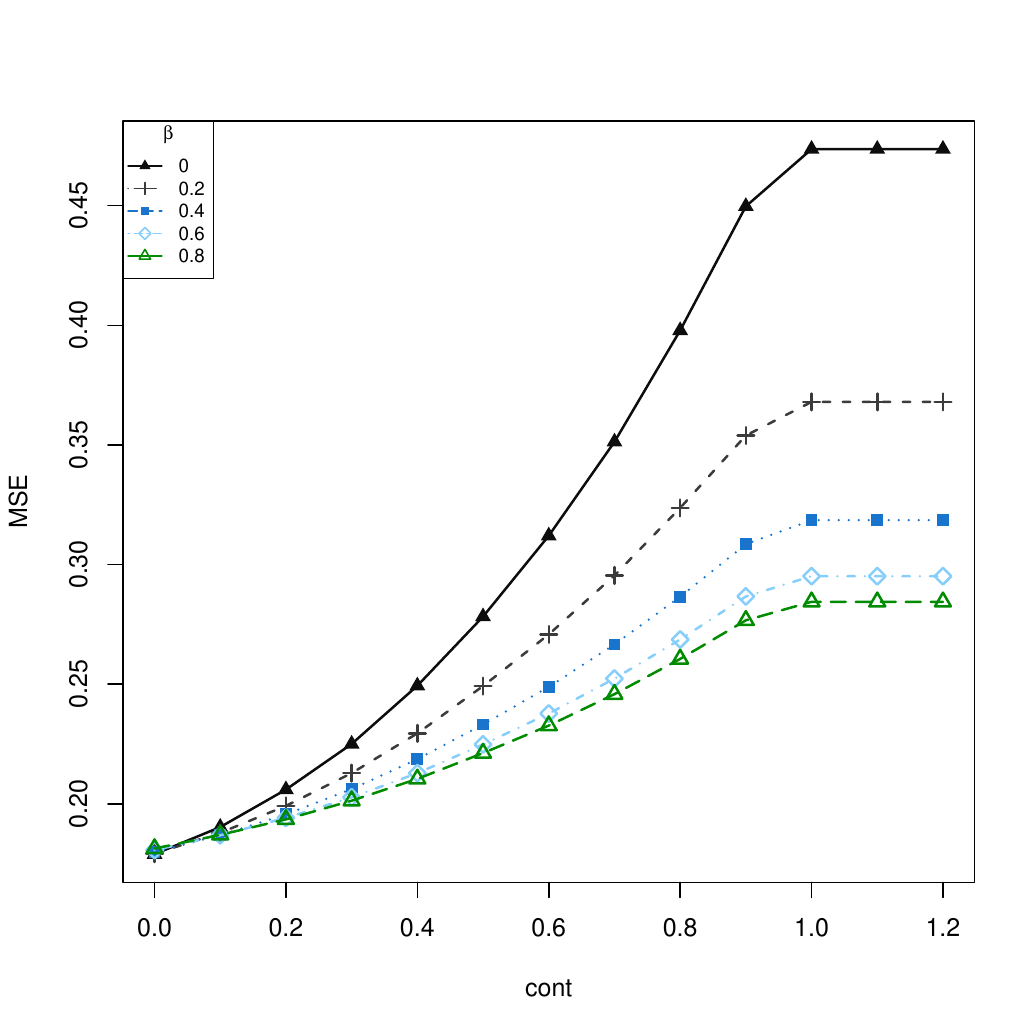}
		\subcaption{RMSE of the mean}
	\end{subfigure}
	\begin{subfigure}{0.35\textwidth}
		\includegraphics[scale=0.3]{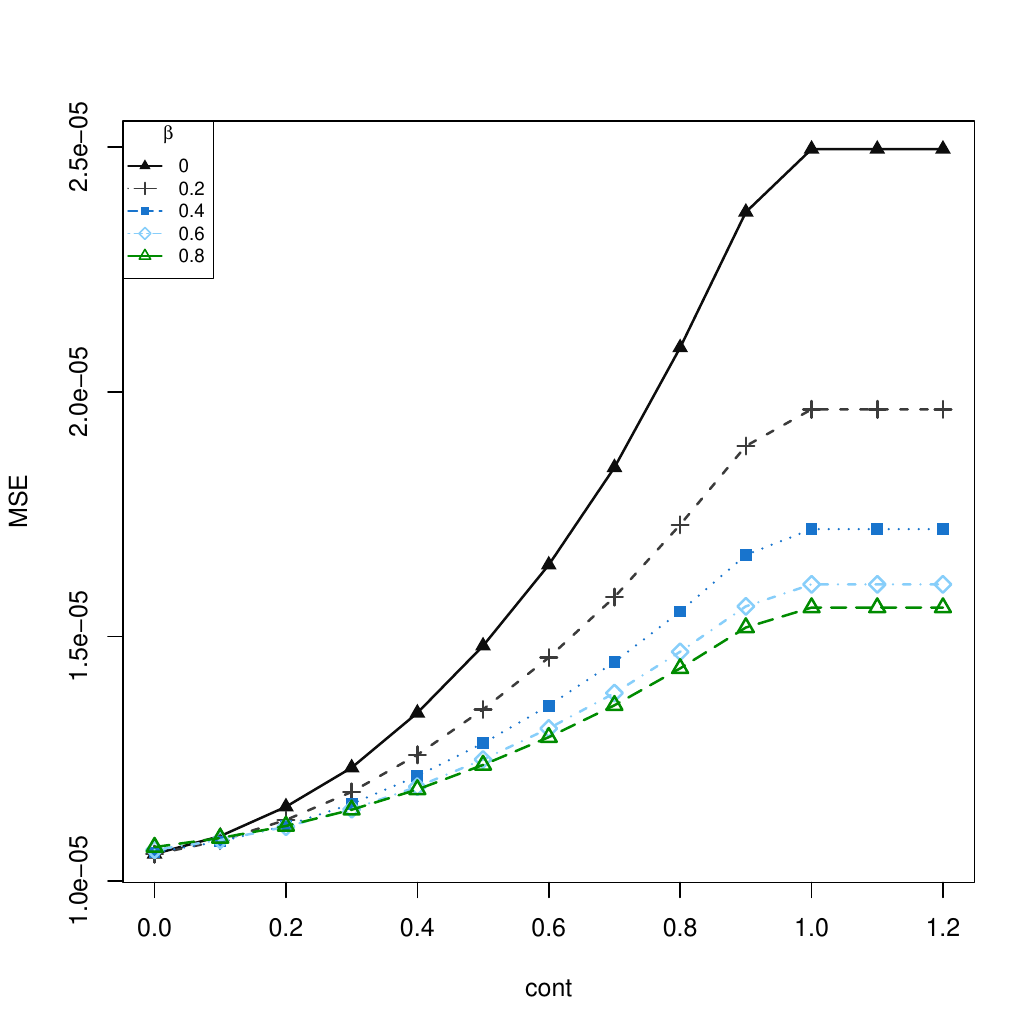}
		\subcaption{RMSE of the hazard}
	\end{subfigure}
	\begin{subfigure}{0.35\textwidth}
	\includegraphics[scale=0.35]{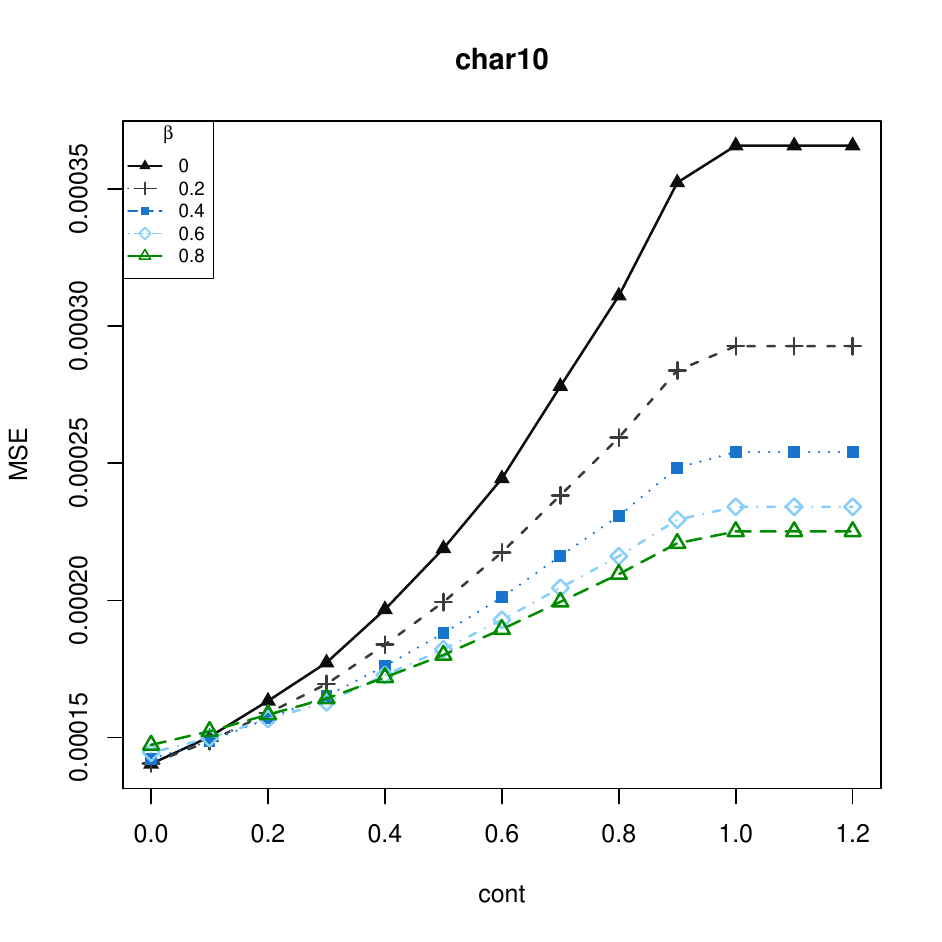}
	\subcaption{RMSE of the reliability}
	\end{subfigure}
	\caption{Root mean squared error (RMSE)  of the estimates of lifetime characteristics  under linear baseline hazard}
	\label{fig:MSE_char_linear}
	\end{figure}
		
	\subsection{Quadratic baseline hazard}
	
	We now consider a quadratic hazard baseline with true parameter $\boldsymbol{\theta}_0= (\exp(-4), 0  , \exp(-6.0) ,  0.5) $ so that the underlying hazard function is given by
	$$h(t,x) = -\exp(0.5x)(\exp(-4)+ \exp(-6)t^2).$$
	
	For quadratic hazard, the probability of failure will increase more rapidly in time and so we consider a shorter experiment with inspection times $1,  2,  3,  4,  5,  6,  7,  8 , 9, 10, 11$ and $12,$ when the experiment terminates. The same stress levels $x_1=0.5$ and $x_2=2.5$ are considered and the time of stress change is set at $\tau_1=8.$ We contaminate the penultimate cell as before, here corresponding to cell $11$ by increasing its probability of failure by $\varepsilon.$ 
	Figure \ref{fig:residualsquadratic} plots the increase in the residuals at the contaminated cells for increasing contamination amount. It can be seen that all three cells, including the event of surviving after the experiment terminates, are contaminated and consequently their residuals increases with $\varepsilon.$

	\begin{figure}[htb]
		\centering
		\includegraphics[width=9cm, height=5cm]{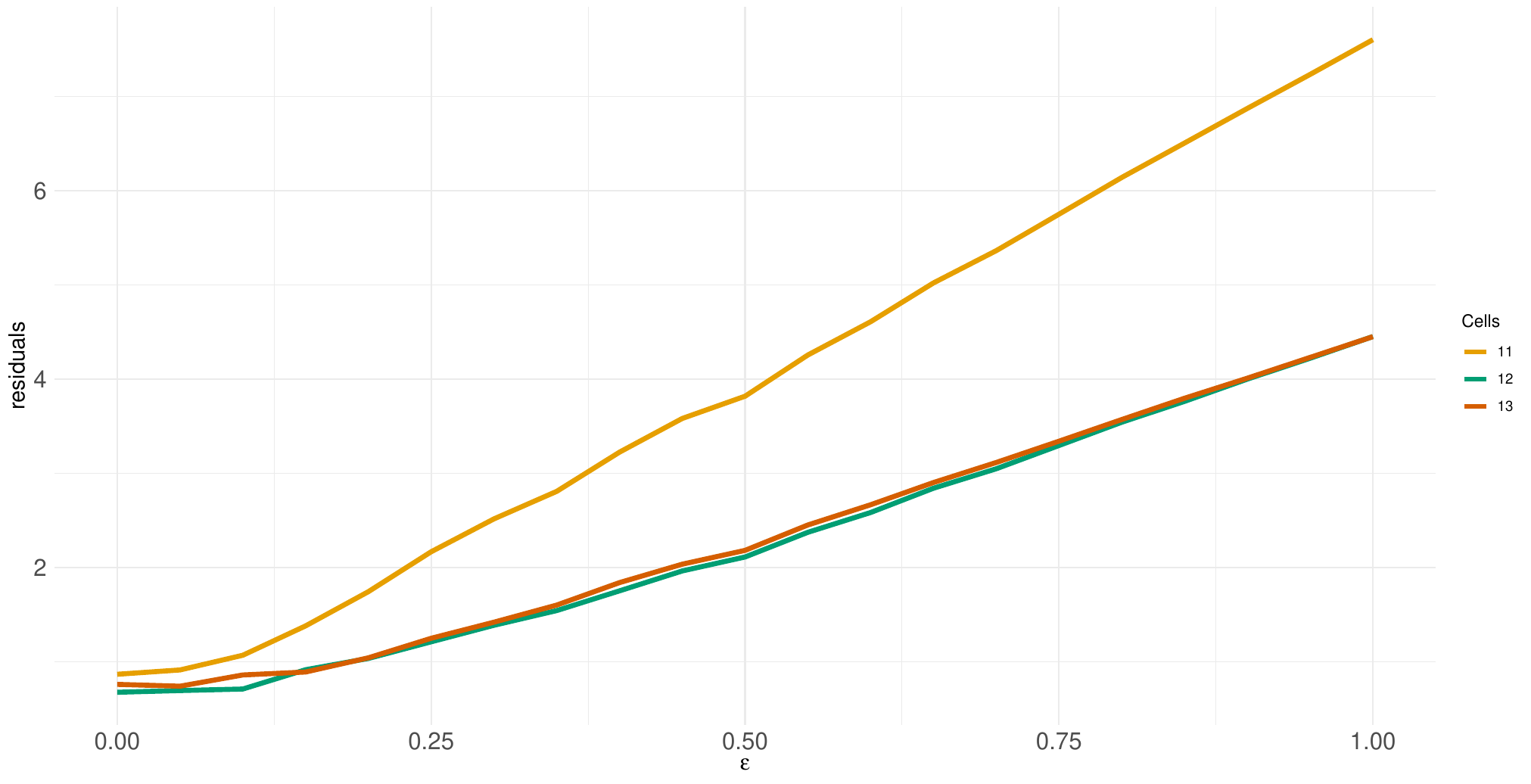}
		\caption{Residuals of contaminated cells under quadratic baseline hazard}
		\label{fig:residualsquadratic}
	\end{figure}
	 
	 As in the previous section, we evaluate the accuracy and robustness of the MDPDEs with different values of the tuning parameter. Figure \ref{fig:MSEquadratic} presents the root mean square error of the estimation of each non-zero model parameter $\gamma_0, \gamma_2,$ and $a_1.$ For the parameter $\gamma_1$ (which is null in the true parameter vector), all MDPDEs estimate it to be zero.

	 \begin{figure}[htb]
	 	\centering
	 	\begin{subfigure}{0.3\textwidth}
	 		\includegraphics[scale=0.3]{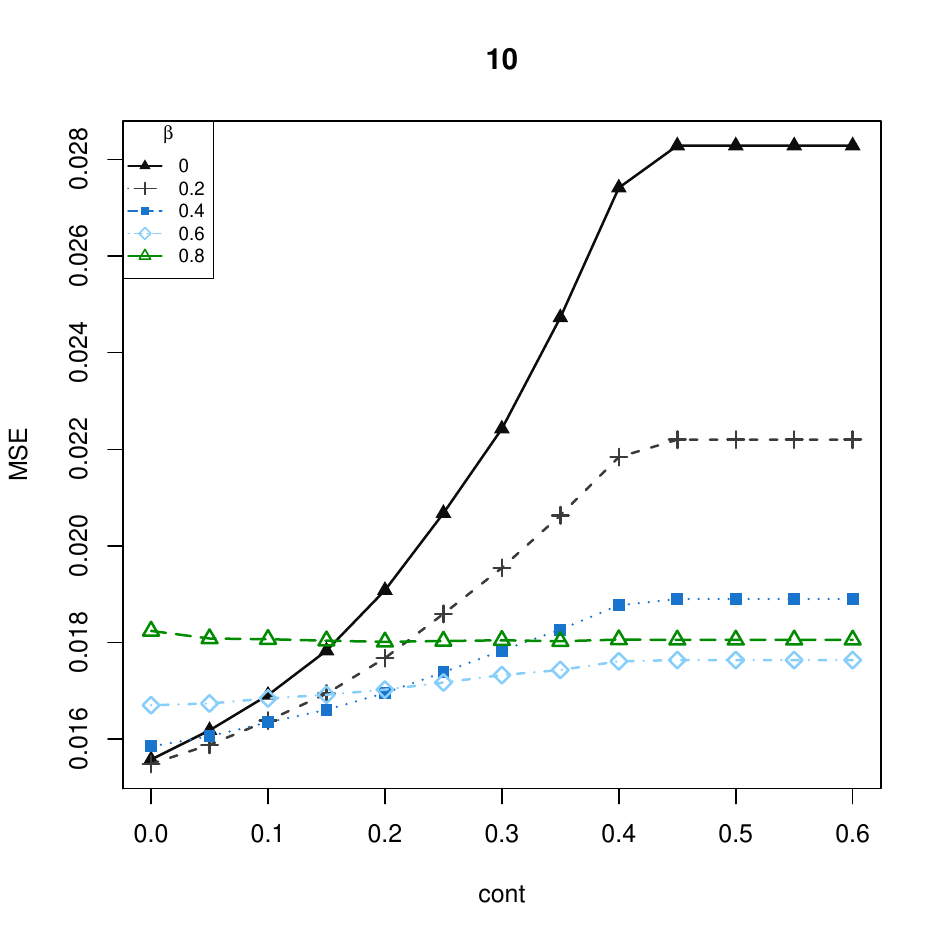}
	 		\subcaption{RMSE($\gamma_0$)}
	 	\end{subfigure}
	 	\begin{subfigure}{0.3\textwidth}
	 		\includegraphics[scale=0.3]{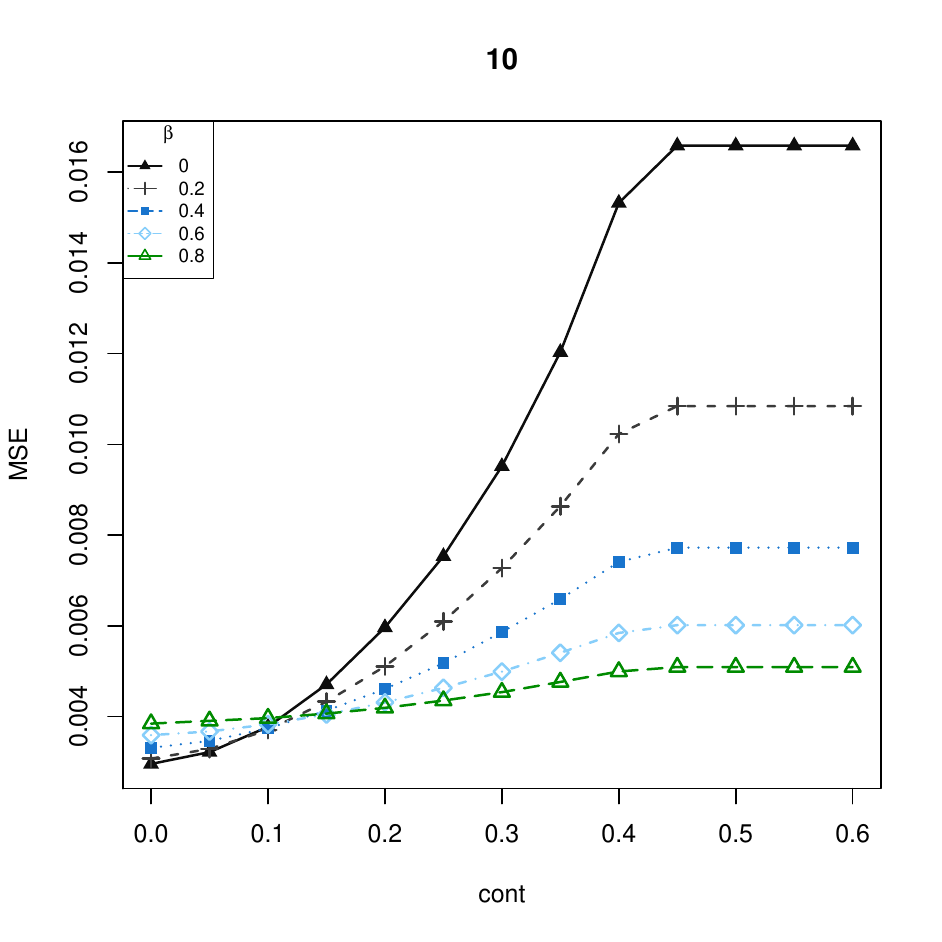}
	 		\subcaption{RMSE($\gamma_2$)}
	 	\end{subfigure}\begin{subfigure}{0.3\textwidth}
	 		\includegraphics[scale=0.3]{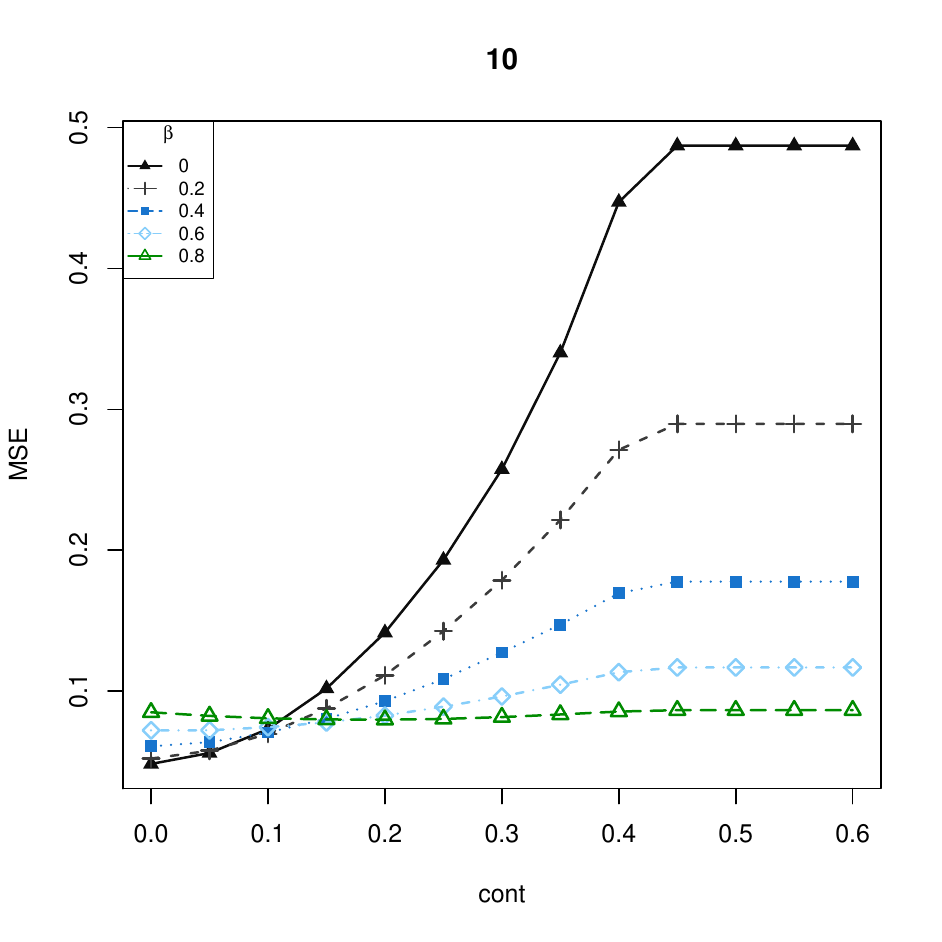}
	 		\subcaption{RMSE($a_1$)}
	 	\end{subfigure}
	 	\caption{Root mean squared error (RMSE) of the estimates of model parameters under quadratic baseline hazard}
	 	\label{fig:MSEquadratic}
	 \end{figure}

	Once again, the MLE is heavily influenced by an abnormal increase in failures at cells with low theoretical probability (i.e., outliers in grouped data), while the MDPDE with strictly positive values of the tuning parameter $\beta$ are much more robust against outlying cells.
	
	
	\section{Illustrative Data Analysis \label{sec:realdata}}
	
	We revisit an example discussed in \cite{elsayed2002optimal}, wherein a two-step-stress ALT was conducted for Metal Oxide Semiconductor (MOS) capacitors in order to estimate its hazard rate over a 10 year period of time	at design temperature of $T=50^\circ C.$ The test needed to be completed in $T=300$ hours and the total number of test units under test was $N= 200.$ To avoid any unexpected change in failure mechanisms within the design temperature range, it was decided by engineers' judgment that the testing temperatures could not exceed a maximum temperature of	$T=250^\circ C.$
	Therefore, two stress levels were considered, $x_1=145^\circ C$ and $x_2=250^\circ C.$
	
	According to the Arrehenius model, the lifetime of a product or a chemical reaction and temperature to which it got exposed are related  as follows:
	$$T = A \exp\left(-\frac{E_a}{kx}\right),$$
	where $T$ represents the  lifetime of the product, 
	$A$ is the pre-exponential factor, to be determined for each product,
	$E_a$	is the activation energy, which varies by failure mechanism,
	$k =  8.36 \times 10^{-5}e V/^\circ K$ is an universal Boltzman's constant and
	$x$ is the absolute temperature in Kelvin.
	
	The Arrhenius law states that at higher temperatures, chemical reactions or degradation processes occur more rapidly. Conversely, at lower temperatures, the rates of these processes decrease. 
	The Arrhenius equation may help in predicting how long-term exposure to higher temperatures affects the product's lifetime. Therefore, accelerated life tests can be applied  on products by subjecting them to higher temperatures than they would typically experience during normal use, and then the failure behaviour can be extrapolated to the normal use temperature using the previous equation.
	 
	Then, re-parametrizing the stress factor to $s_i = -\frac{1}{x_i},$ the Arrhenius  Equation can be rewritten in a log-linear relation form as
	$$T =  \exp\left( \log(A) -\left(\frac{E_a}{k}\right)s\right);$$
	that is, the two stress are $x_1= -2.3914 \times 10^3$ and $x_2= -1.9114 \times 10^3. $
	 \cite{elsayed2002optimal} assumed linear baseline hazard functions and used the initial values 	$\gamma_0 = 0.0001, \gamma_1 = 0.5$ and $a_1 = 3,800$ for selecting optimal designs for the temperature and time of stress change. We used their suggested parameters to generate a real data-based sample with inspection times  $40, 60, 90, 110, 130, 150, 170, 183, 190, 210, 220$ and $250.$ Table \ref{table:realdataestimates} presents the MDPDEs for different values of the tuning parameter in the absence of contamination. The estimates are averages of the estimated parameters over $R=1,000$ simulated samples.
	 
	 \begin{table}[ht]
	 	\centering
	 	\begin{tabular}{rrrrrrr}
	 		\hline
	 		$\beta$ & 0 & 0.2 & 0.4 & 0.6 & 0.8 & 1 \\ 
	 		\hline
	 		$\widehat{\gamma}_0^\beta$ & 0.00012 & 0.00021 & 0.00011 & 0.00010 & 0.00009 & 0.00007 \\ 
	 		$\widehat{\gamma}_1^\beta$ & 0.4830 & 0.4785 & 0.4751 & 0.4728 & 0.4714 & 0.4716 \\ 
	 		 $\widehat{a}_1^\beta$ & 3,790 & 3,790 & 3,790 & 3,780 & 3,780 & 3,780 \\ 
	 		\hline
 		\end{tabular}
 	\caption{Model parameter estimates for the real-data based example }
 	\label{table:realdataestimates}
 	\end{table}
	 		
	 All MDPDE estimates, for the different values of $\beta,$ are quite similar and near the true value of the parameter, illustrating their competitive performance in the absence of contamination.
	 
	 Now, we consider a contaminated scenario. Following the description in Section \ref{sec:simulation}, we increase the probability of failure within the penultimate cell (and the subsequent cells are consequently also contaminated). Figure \ref{fig:MSE_char_realdata_linear} presents the RMSE of the estimates of the median, mean, reliability and hazard rate at $t_0=60$ under NOC based on the MDPDE with different values of $\beta$. As in the previous numerical analyses, the MLE gets heavily influenced by the abnormal count of failures and consequently its performance worsens rapidly as the amount of contamination increases.  Meanwhile, the MDPDEs with positive $\beta$s show a clear advantage over the MLE in terms of robustness. 
	 
	\begin{figure}[htb]
		\centering
		\begin{subfigure}{0.35\textwidth}
			\includegraphics[scale=0.3]{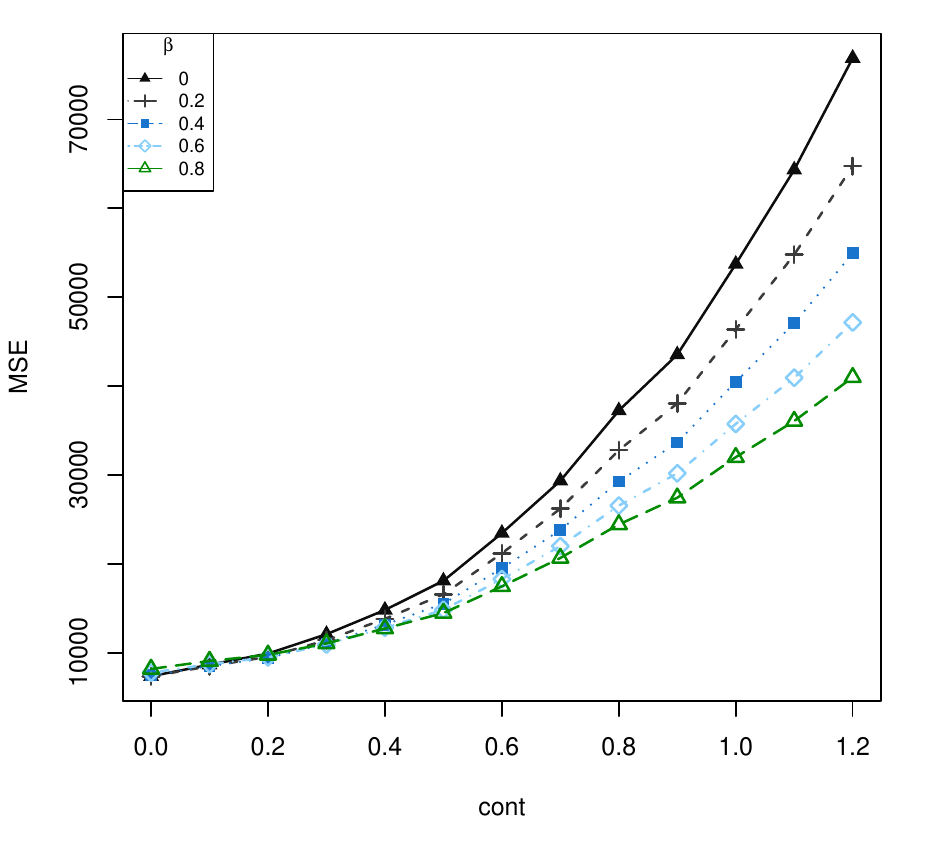}
			\subcaption{RMSE of the median}
		\end{subfigure}
		\begin{subfigure}{0.35\textwidth}
			\includegraphics[scale=0.3]{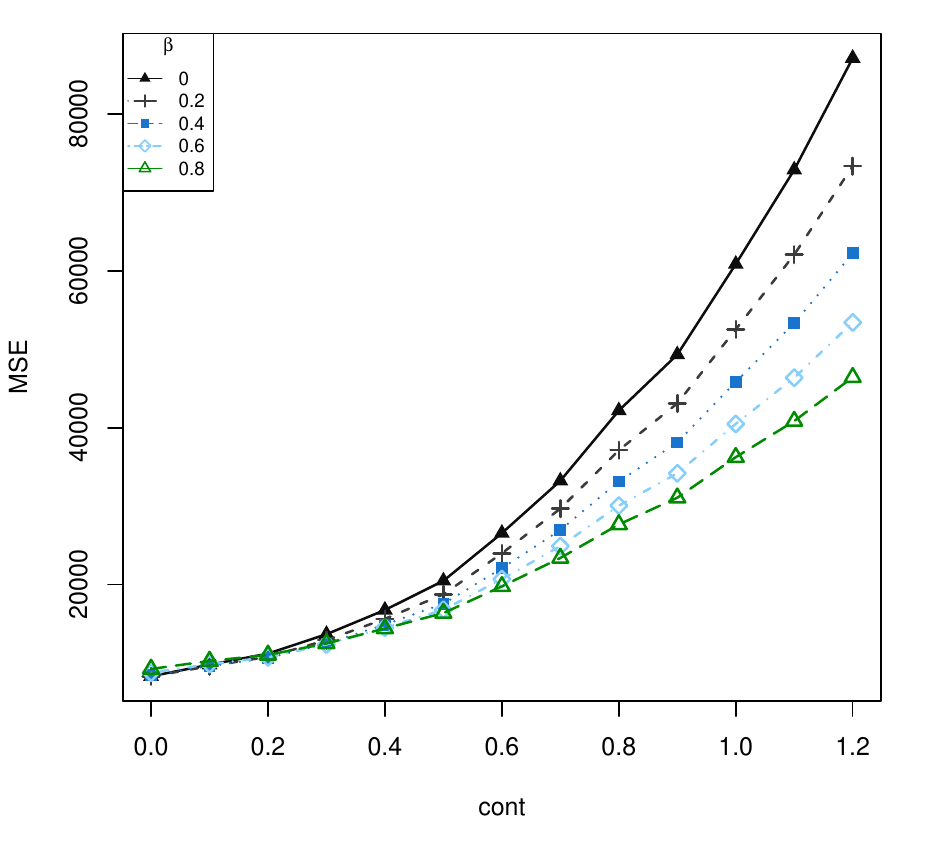}
			\subcaption{RMSE of the mean}
		\end{subfigure}
		\begin{subfigure}{0.35\textwidth}
			\includegraphics[scale=0.3]{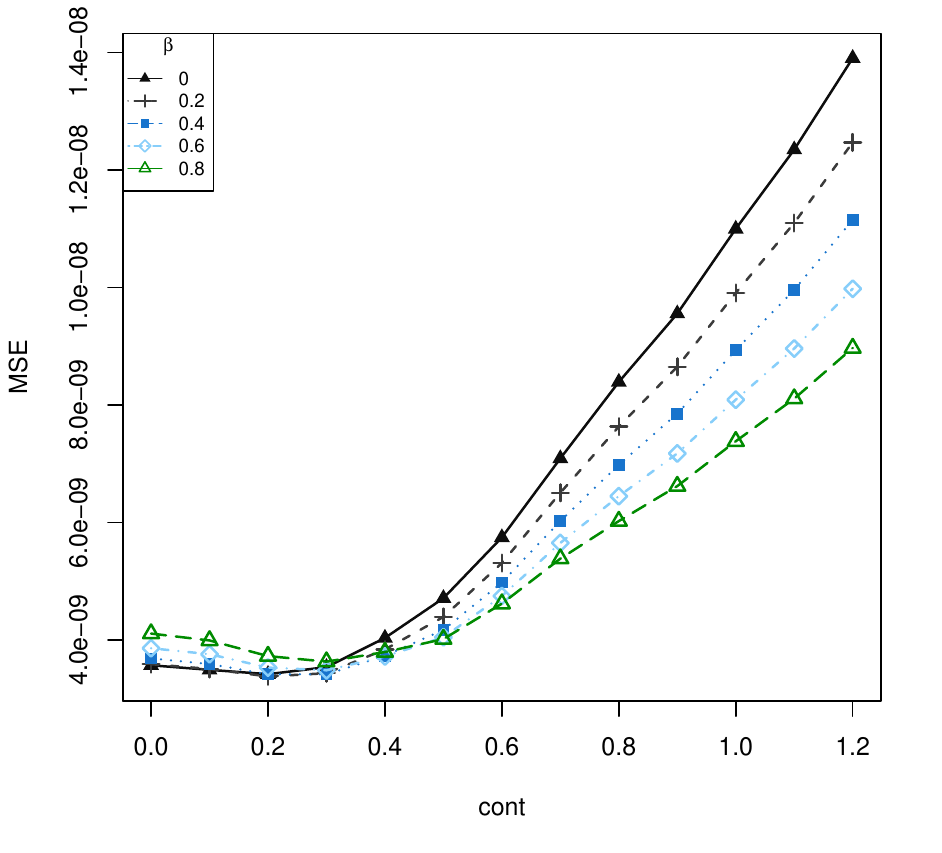}
			\subcaption{RMSE of the hazard}
		\end{subfigure}
		\begin{subfigure}{0.35\textwidth}
			\includegraphics[scale=0.3]{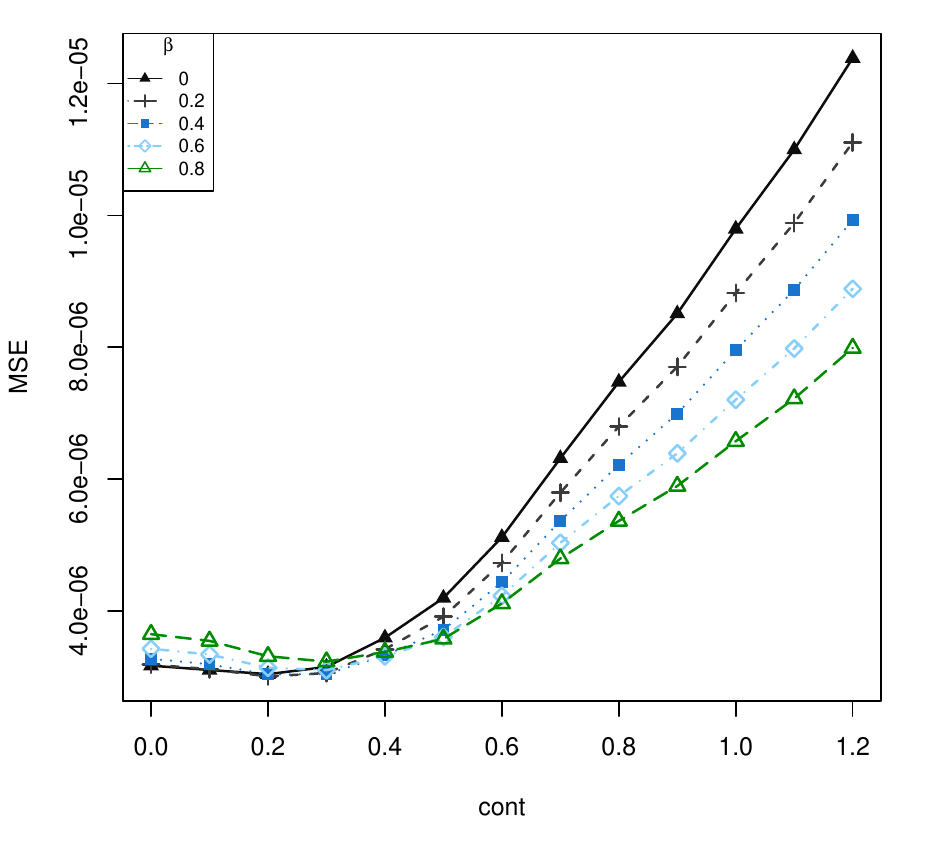}
			\subcaption{RMSE of the reliability}
		\end{subfigure}
		\caption{Root mean squared error (RMSE) of the estimates of lifetime characteristics for the illustrative example}
		\label{fig:MSE_char_realdata_linear}
	\end{figure}
%
	
	\section{Conclusions \label{sec:concluding}}
	
	This paper introduces inferential methods for step-stress ALTs under proportional hazards for interval-monitored reliability experiments. Specifically, we have investigated estimation procedures under linear and quadratic baseline functions.
	Due to the lack of robustness of the MLE, we have presented a robust family of estimators known as the MDPDEs. This family is indexed by a tuning parameter, $\beta$, which controls the trade-off between the efficiency of the estimator and the robustness. Indeed, the MDPDE family includes the MLE as an extreme case with maximum efficiency, but minimal (indeed, none) robustness. 
	Furthermore, from the estimates of the lifetime characteristics, we have derived point estimates and confidence intervals for the main lifetime characteristics, including the mean lifetime and distribution quantiles,  under both linear and quadratic forms. 
	
	All proposed estimators haven been evaluated  through a simulation study, demonstrating a significant advantage of the MDPDEs in terms of robustness, with a small compromise in efficiency. The performance of the robust estimators has  also been evaluated in a real application for inferring the reliability of Metal Oxide Semiconductor (MOS) capacitors, as described in \cite{elsayed2002optimal}. In both analyses, the MDPDEs are quite similar in the absence of contamination (demonstrating their competitive performance even with no contamination), but the MLE rapidly worsens when contamination gets introduced in the grouped data.
	
	While both linear and quadratic baseline forms exhibit flexibility in approximating the general baseline hazards, one would naturally be interested in preferable form for the baseline, based on the data at hand. In our future research, we intend to explore hypothesis tests for the baseline form.
	
	\section*{Acknowledgements}
	This work was supported by the Spanish Grant PID2021-124933NB-I00  and Natural Sciences and Engineering Research Council of Canada (of the first author) through an Individual Discovery Grant (No. 20013416).
	M. Jaenada and L. Pardo are members of the Interdisciplinary Mathematics Institute (IMI).
	
	\bibliographystyle{abbrvnat}
	\bibliography{bibliography.bib}
	
	\appendix
	\section{Proofs of the main results}
	
	\subsection{Proof of Result \ref{thm:estimatinglinear}} \label{app:proof1}
	\begin{proof}
		Taking derivatives in the DPD loss defined in Eq. (\ref{eq:DPDloss}) and equating them to zero, we get
		\begin{equation*}
			\frac{\partial d_{\beta}\left( \widehat{\boldsymbol{p}},\boldsymbol{\pi}\left(\boldsymbol{\theta}\right)\right)}{\partial \boldsymbol{\theta}}   = (\beta+1)\sum_{j=1}^{L+1} \left(  \pi_j(\boldsymbol{\theta})^{\beta-1} \left(\pi_j(\boldsymbol{\theta}) -  \widehat{p}_j\right) \frac{\partial \pi_j(\boldsymbol{\theta})}{\partial \boldsymbol{\theta}}  \right) = \boldsymbol{0}_3,
		\end{equation*}
		where the derivative of each probability of failure $\pi_j(\boldsymbol{\theta})$ is given by
		
		\begin{equation*}
			\frac{\partial\pi_j(\boldsymbol{\theta})}{\partial \boldsymbol{\theta}}
			=   \frac{\partial R_{\boldsymbol{\theta}}(t_{j-1})}{\partial \boldsymbol{\theta}} -\frac{\partial R_{\boldsymbol{\theta}}(t_j)}{\partial \boldsymbol{\theta}}.
		\end{equation*}
		Let us write the reliability function of the device at stress level $x$ as  $$R_{\boldsymbol{\theta}}(t, x) = 
		\begin{cases}
			\exp\left(-\exp(a_1x_1) \left( \gamma_0t + \gamma_1 \frac{t^2}{2}\right)\right), & x=x_1,\\
			\exp\left(-\exp(a_1x_2) \left( \gamma_0(t+s) + \gamma_1 \frac{(t+s)^2}{2}\right)\right), & x=x_2,
		\end{cases}$$
		where $s$ is the corresponding shifting time  corresponding to the stress change from $x_1$ to $x_2,$  which depends on the model parameters and satisfies Eq. (\ref{eq:secondorderequation}).
		
		Firstly, we need to compute the derivatives of the shifting time $s$ (as a function of the parameters $\boldsymbol{\theta}$).  Taking implicit derivatives in Equation (\ref{eq:secondorderequation}) with respect to each parameter $(\gamma_0, \gamma_1,a_0, a_1),$ we obtain the following system of linear equations:
		\begin{equation*}
			\begin{aligned}
				\gamma_1(\tau+s)\frac{\partial s}{\partial \gamma_0} + \gamma_0 \frac{\partial s}{\partial \gamma_0} &= - (\tau+s) + \frac{\exp(a_1x_1)}{\exp(a_1x_2)}\tau , \\
				\gamma_1(\tau+s)\frac{\partial s}{\partial \gamma_1} + \gamma_0 \frac{\partial s}{\partial \gamma_1} &= - \frac{1}{2}(\tau+s)^2 + \frac{\exp(a_1x_1)}{\exp(a_1x_2)} \frac{1}{2}\tau^2,  \\
				\gamma_1(\tau+s)\frac{\partial s}{\partial a_1} + \gamma_0 \frac{\partial s}{\partial a_1} &= \frac{\exp(a_1x_1)}{\exp(a_1x_2)}(x_1-x_2) \left(\frac{\gamma_1}{2}\tau^2 + \gamma_0\tau\right).
			\end{aligned}
		\end{equation*}
		Let us define $$k_1(t) = \gamma_1t+\gamma_0 \hspace{1cm}\text{and}\hspace{1cm} k_2(t) = \frac{\gamma_1}{2}t + \gamma_0.$$
		Then, solving the above equations, we obtain the derivatives of the shifting time as follows:
		\begin{equation*}
			\begin{aligned}
				\frac{\partial s}{\partial \gamma_0} & =  \frac{\gamma_1}{2} \frac{ (\tau+s) s}{k_2(\tau)}\frac{1}{k_1(\tau+s)}, \\
				\frac{\partial s}{\partial \gamma_1}  & = - \frac{\gamma_0}{2}\frac{(\tau+s)s}{k_2(\tau)} \frac{1}{k_1(\tau+s)}, \\
				\frac{\partial s}{\partial a_1}  & =  \frac{(\tau+s)}{\tau}\frac{k_2(\tau+s)}{k_1(\tau+s)}(x_1-x_2),
			\end{aligned}
		\end{equation*}
		where we have used the equality
		$$\frac{\exp(a_1x_1)}{\exp(a_1x_2)} = \frac{(\tau+s)}{ \tau}\frac{k_2(\tau+s)}{k_2(\tau)}$$
		to simplify the expressions.
		Now, we compute separately the derivatives of the reliability under stress levels $x_1$ and $x_2.$ Under the first stress level $x_1$, we have 
		
		\begin{equation*}
			\begin{aligned}
				\frac{\partial  R_{\boldsymbol{\theta}}(t,x_1)}{\partial \gamma_0} &= -	R_{\boldsymbol{\theta}}(t,x_1)
				\exp(a_1x_1) t ,\\
				\frac{\partial R_{\boldsymbol{\theta}}(t,x_1)}{\partial \gamma_1} &= -R_{\boldsymbol{\theta}}(t,x_1)
				\exp(a_1x_1) \frac{t^2}{2} ,\\
				%
				%
				\frac{\partial R_{\boldsymbol{\theta}}(t,x_1)}{\partial a_1} &= 	-R_{\boldsymbol{\theta}}(t,x_1) \exp(a_1x_1)  tk_2(t) x_2.\\
			\end{aligned}
		\end{equation*}
		
		On the other hand, the derivatives of the reliability under the increased stress $x_2$ with respect to each parameter are as follows:
		\begin{equation*}
			\begin{aligned}
				\frac{\partial R_{\boldsymbol{\theta}}(t,x_2)}{\partial \gamma_0} &= 	\exp\left(-\exp(a_1x_2) \left( \gamma_0(t+s) + \gamma_1 \frac{(t+s)^2}{2}\right)\right)\\
				&  \hspace{0.5cm} \times \left[
				- \exp(a_1x_2) \left((t+s)+\gamma_0\frac{\partial s}{\partial \gamma_0} + \gamma_1(t+s)\frac{\partial s}{\partial \gamma_0}\right)\right],\\
				\frac{\partial R_{\boldsymbol{\theta}}(t,x_2)}{\partial \gamma_1} &= 	\exp\left(-\exp(a_1x_2) \left( \gamma_0(t+s) + \gamma_1 \frac{(t+s)^2}{2}\right)\right)\\
				&  \hspace{0.5cm} \times \left[
				- \exp(a_1x_2) \left(\frac{(t+s)^2}{2}+\gamma_0\frac{\partial s}{\partial \gamma_1} + \gamma_1(t+s)\frac{\partial s}{\partial \gamma_1}\right)\right],\\
				%
				%
				\frac{\partial R_{\boldsymbol{\theta}}(t,x_2)}{\partial a_1} &= 	\exp\left(-\exp(a_1x_2) \left( \gamma_0(t+s) + \gamma_1 \frac{(t+s)^2}{2}\right)\right)\\
				&  \hspace{0.5cm} \times \left[-\exp(a_1x_2) \left( \gamma_0(t+s) + \gamma_1 \frac{(t+s)^2}{2}\right) x_2
				- \exp(a_1x_2) \left(\gamma_0\frac{\partial s}{\partial a_1} + \gamma_1(t+s)\frac{\partial s}{\partial a_1}\right)\right],\\
			\end{aligned}
		\end{equation*}
		Upon substituting the formulas of the shifting time derivatives in the above expressions, we can rewrite the above derivatives as follows:
		\begin{equation*}
			\begin{aligned}
				\frac{\partial  R_{\boldsymbol{\theta}}(t,x_2)}{\partial \gamma_0} &= -	R_{\boldsymbol{\theta}}(t,x_2) 
				\exp(a_1x_2) \left( (t+s)+ \frac{ \gamma_1}{2} 
				\frac{(\tau+s) s}{k_2(\tau)}\frac{k_1(t+s)}{k_1(\tau+s)}
				\right),\\
				\frac{\partial R_{\boldsymbol{\theta}}(t,x_2)}{\partial \gamma_1} &= -R_{\boldsymbol{\theta}}(t,x_2)
				\exp(a_1x_2) \left(\frac{(t+s)^2}{2}-    \frac{\gamma_0}{2} \frac{(\tau+s)s}{k_2(\tau)} \frac{k_1(t+s)}{k_1(\tau+s)}          \right),\\
				%
				%
				\frac{\partial R_{\boldsymbol{\theta}}(t,x_2)}{\partial a_1} &= 	- R_{\boldsymbol{\theta}}(t,x_2) \exp(a_1x_2) \left[ 
				(t+s)k_2(t+s) x_2
				 +  \frac{k_1(t+s)k_2(\tau+s)}{k_1(\tau+s)} (\tau+s)(x_1-x_2) \right].
			\end{aligned}
		\end{equation*}
		
		Now, defining  $$\boldsymbol{z}_j = \begin{cases}
			\frac{\partial R_{\boldsymbol{\theta}}(t_{j-1},x_1)}{\partial \boldsymbol{\theta}} - \frac{\partial R_{\boldsymbol{\theta}}(t_{j},x_1)}{\partial \boldsymbol{\theta}} & t_{j-1} < \tau \\
			\frac{\partial R_{\boldsymbol{\theta}}(t_{j-1},x_2)}{\partial \boldsymbol{\theta}} - \frac{\partial R_{\boldsymbol{\theta}}(t_{j},x_2)}{\partial \boldsymbol{\theta}} & \tau \leq t_{j-1} 
		\end{cases},$$
		we obtain the required formula.		
	\end{proof}


\subsection{Proof of Result \ref{thm:estimatingquadratic}} \label{app:proof2}
\begin{proof}
	Taking derivatives in the DPD loss defined in Eq. (\ref{eq:DPDloss}) and equating them to zero, we get
	\begin{equation*}
		\frac{\partial d_{\beta}\left( \widehat{\boldsymbol{p}},\boldsymbol{\pi}\left(\boldsymbol{\theta}\right)\right)}{\partial \boldsymbol{\theta}}   = (\beta+1)\sum_{j=1}^{L+1} \left(  \pi_j(\boldsymbol{\theta})^{\beta-1} \left(\pi_j(\boldsymbol{\theta}) -  \widehat{p}_j\right) \frac{\partial \pi_j(\boldsymbol{\theta})}{\partial \boldsymbol{\theta}}  \right) = \boldsymbol{0}_4.
	\end{equation*}
	where the derivative of each probability of failure $\pi_j(\boldsymbol{\theta})$ is given by
	\begin{equation*}
		\frac{\partial\pi_j(\boldsymbol{\theta})}{\partial \boldsymbol{\theta}}
		=   \frac{\partial R_{\boldsymbol{\theta}}(t_{j-1})}{\partial \boldsymbol{\theta}} -\frac{\partial R_{\boldsymbol{\theta}}(t_j)}{\partial \boldsymbol{\theta}},
	\end{equation*}
	with $R_{\boldsymbol{\theta}}(t_{j-1})$ denoting the reliability under quadratic hazard. The explicit formula of $\pi_j(\boldsymbol{\theta})$ is as given in (\ref{pijquadratic}).
	Let us write the reliability function of the device at stress level $x$ as  $$R_{\boldsymbol{\theta}}(t, x) = 
	\begin{cases}
		\exp\left(-\exp(a_1x_1) \left( \gamma_0t + \gamma_1 \frac{t^2}{2} + \gamma_2 \frac{t^3}{3}\right)\right), & x=x_1,\\
		\exp\left(-\exp(a_1x_2) \left( \gamma_0(t+s) + \gamma_1 \frac{(t+s)^2}{2} + \gamma_2 \frac{(t+s)^3}{3} \right)\right), & x=x_2,
	\end{cases}$$
	where $s$ is the shifting time  corresponding to the stress change from $x_1$ to $x_2,$  which depends on the model parameters and satisfies third-order equation defined in Eq. (\ref{eq:thirdorderequation}).
	
	Firstly, we need to compute the derivatives of the shifting time $s$ (as a function of the parameters $\boldsymbol{\theta}$).  Taking implicit derivatives in Eq. (\ref{eq:thirdorderequation}) with respect to each parameter $(\gamma_0, \gamma_1, \gamma_2, a_1),$ we obtain the following system of linear equations:
	\begin{equation*}
		\begin{aligned}
			\gamma_2(\tau+s)^2 \frac{\partial s}{\partial \gamma_0} +\gamma_1(\tau+s)\frac{\partial s}{\partial \gamma_0} + \gamma_0 \frac{\partial s}{\partial \gamma_0} &= - (\tau+s) + \frac{\exp(a_1x_1)}{\exp(a_1x_2)}\tau , \\
			\gamma_2(\tau+s)^2 \frac{\partial s}{\partial \gamma_1} + \gamma_1(\tau+s)\frac{\partial s}{\partial \gamma_1} + \gamma_0 \frac{\partial s}{\partial \gamma_1} &= - \frac{1}{2}(\tau+s)^2 + \frac{\exp(a_1x_1)}{\exp(a_1x_2)} \frac{1}{2}\tau^2,  \\
			\gamma_2(\tau+s)^2 \frac{\partial s}{\partial \gamma_2} + \gamma_1(\tau+s)\frac{\partial s}{\partial \gamma_2} + \gamma_0 \frac{\partial s}{\partial \gamma_2} &= - \frac{1}{3}(\tau+s)^3 + \frac{\exp(a_1x_1)}{\exp(a_1x_2)} \frac{1}{3}\tau^3,  \\
			\gamma_2(\tau+s)^2 \frac{\partial s}{\partial a_1}+ \gamma_1(\tau+s)\frac{\partial s}{\partial a_1} + \gamma_0 \frac{\partial s}{\partial a_1} &= \frac{\exp(a_1x_1)}{\exp(a_1x_2)}(x_1-x_2) \left(\frac{\gamma_2}{3} \tau^3 + \frac{\gamma_1}{2}\tau^2 + \gamma_0\tau\right).
		\end{aligned}
	\end{equation*}
	Let us define  $$k_1(t) = \gamma_2t^2+\gamma_1t+\gamma_0 \hspace{1cm}\text{and}\hspace{1cm} k_2(t) = \frac{\gamma_2}{3}t^2+ \frac{\gamma_1}{2}t + \gamma_0.$$
	Then, the system of equations can be rewritten as
	\begin{equation*}
		\begin{aligned}
			k_1(\tau+s) \frac{\partial s}{\partial \gamma_0} &= - (\tau+s) + \frac{\exp(a_1x_1)}{\exp(a_1x_2)}\tau , \\
			k_1(\tau+s) \frac{\partial s}{\partial \gamma_1} &= - \frac{1}{2}(\tau+s)^2 + \frac{\exp(a_1x_1)}{\exp(a_1x_2)} \frac{1}{2}\tau^2,  \\
			k_1(\tau+s) \frac{\partial s}{\partial \gamma_2} &= - \frac{1}{3}(\tau+s)^3 + \frac{\exp(a_1x_1)}{\exp(a_1x_2)} \frac{1}{3}\tau^3,  \\
			k_1(\tau+s) \frac{\partial s}{\partial a_1} &= \frac{\exp(a_1x_1)}{\exp(a_1x_2)}(x_1-x_2) \tau k_2(\tau).
		\end{aligned}
	\end{equation*}

	Solving the above equations, we obtain the derivatives of the shifting time as follows:
	\begin{equation*}
		\begin{aligned}
			\frac{\partial s}{\partial \gamma_0} & = \frac{(\tau+s)}{k_1(\tau+s)}\left(\frac{k_2(\tau+s)}{k_2(\tau)}-1\right),   \\
			\frac{\partial s}{\partial \gamma_1}  & = \frac{1}{2}\frac{(\tau+s)}{k_1(\tau+s)}\left(\tau\frac{k_2(\tau+s)}{k_2(\tau)}-(\tau+s)\right),  \\
				\frac{\partial s}{\partial \gamma_2}  & =  \frac{1}{3}\frac{(\tau+s)}{k_1(\tau+s)}\left(\tau^2\frac{k_2(\tau+s)}{k_2(\tau)}-(\tau+s)^2\right),   \\
			\frac{\partial s}{\partial a_1}  & =  (\tau+s)\frac{k_2(\tau+s)}{k_1(\tau+s)}(x_1-x_2),
		\end{aligned}
	\end{equation*}
	where we have used the equality
	$$\frac{\exp(a_1x_1)}{\exp(a_1x_2)} = \frac{(\tau+s)}{ \tau}\frac{k_2(\tau+s)}{k_2(\tau)}$$
	to simplify the expressions.
	Now, we compute separately the derivatives of the reliability under stress levels $x_1$ and $x_2.$ Under the first stress level $x_1$, we have 
	
	\begin{equation*}
		\begin{aligned}
			\frac{\partial  R_{\boldsymbol{\theta}}(t,x_1)}{\partial \gamma_0} &= -	R_{\boldsymbol{\theta}}(t,x_1)
			\exp(a_1x_1) t ,\\
			\frac{\partial R_{\boldsymbol{\theta}}(t,x_1)}{\partial \gamma_1} &= -R_{\boldsymbol{\theta}}(t,x_1)
			\exp(a_1x_1) \frac{t^2}{2} ,\\
			\frac{\partial R_{\boldsymbol{\theta}}(t,x_1)}{\partial \gamma_2} &= -R_{\boldsymbol{\theta}}(t,x_1)
			\exp(a_1x_1) \frac{t^3}{3} ,\\
			\frac{\partial R_{\boldsymbol{\theta}}(t,x_1)}{\partial a_0} &= 	-R_{\boldsymbol{\theta}}(t,x_1) \exp(a_1x_1)tk_2(t),\\
			\frac{\partial R_{\boldsymbol{\theta}}(t,x_1)}{\partial a_1} &= 	-R_{\boldsymbol{\theta}}(t,x_1) \exp(a_1x_1)tk_2(t) x_2.\\
		\end{aligned}
	\end{equation*}
	
	On the other hand, the derivatives of the reliability under the increased stress $x_2$ with respect to each parameter are as follows:
	\begin{equation*}
		\begin{aligned}
			\frac{\partial R_{\boldsymbol{\theta}}(t,x_2)}{\partial \gamma_0} &= 	\exp\left(-\exp(a_1x_2) \left( \gamma_0(t+s) + \gamma_1  \frac{(t+s)^2}{2} + \gamma_2\frac{(t+s)^3}{3}\right)\right)\\
			&  \hspace{0.5cm} \times \left[
			- \exp(a_1x_2) \left((t+s)+\gamma_0\frac{\partial s}{\partial \gamma_0} + \gamma_1(t+s)\frac{\partial s}{\partial \gamma_0} + \gamma_2(t+s)^2\frac{\partial s}{\partial \gamma_0}\right)\right],\\
			\frac{\partial R_{\boldsymbol{\theta}}(t,x_2)}{\partial \gamma_1} &= 	\exp\left(-\exp(a_1x_2) \left( \gamma_0(t+s) + \gamma_1 \frac{(t+s)^2}{2} + \gamma_2\frac{(t+s)^3}{3} \right)\right)\\
			&  \hspace{0.5cm} \times \left[
			- \exp(a_1x_2) \left(\frac{(t+s)^2}{2}+\gamma_0\frac{\partial s}{\partial \gamma_1} + \gamma_1(t+s)\frac{\partial s}{\partial \gamma_1} + \gamma_1(t+s)\frac{\partial s}{\partial \gamma_0} + \gamma_2(t+s)^2\frac{\partial s}{\partial \gamma_1} \right)\right],\\
			\frac{\partial R_{\boldsymbol{\theta}}(t,x_2)}{\partial \gamma_2} &= 	\exp\left(-\exp(a_1x_2) \left( \gamma_0(t+s) + \gamma_1  \frac{(t+s)^2}{2} + \gamma_2\frac{(t+s)^3}{3}\right)\right)\\
			&  \hspace{0.5cm} \times \left[
			- \exp(a_1x_2) \left(\frac{(t+s)^3}{3}+\gamma_0\frac{\partial s}{\partial \gamma_2} + \gamma_1(t+s)\frac{\partial s}{\partial \gamma_2} + \gamma_2(t+s)^2\frac{\partial s}{\partial \gamma_2}\right)\right],\\
			\frac{\partial R_{\boldsymbol{\theta}}(t,x_2)}{\partial a_0} &= 	\exp\left(-\exp(a_1x_2) \left( \gamma_0(t+s) + \gamma_1 \frac{(t+s)^2}{2} + \gamma_2\frac{(t+s)^3}{3} \right)\right)\\
			&  \hspace{0.5cm} \times \left[-\exp(a_1x_2) \left( \gamma_0(t+s) + \gamma_1 \frac{(t+s)^2}{2} + \gamma_2\frac{(t+s)^3}{3} \right) \right.
			\\	&  \hspace{1cm} 	
			\left. - \exp(a_1x_2) \left(\gamma_0\frac{\partial s}{\partial a_0} + \gamma_1(t+s)\frac{\partial s}{\partial a_0} + \gamma_2(t+s)^2\frac{\partial s}{\partial a_0}\right)\right],\\
			\frac{\partial R_{\boldsymbol{\theta}}(t,x_2)}{\partial a_1} &= 	\exp\left(-\exp(a_1x_2) \left( \gamma_0(t+s) + \gamma_1 \frac{(t+s)^2}{2} + \gamma_2\frac{(t+s)^3}{3} \right)\right)\\
			&  \hspace{0.5cm} \times \left[-\exp(a_1x_2) \left( \gamma_0(t+s) + \gamma_1 \frac{(t+s)^2}{2}+ \gamma_2\frac{(t+s)^3}{3}\right) x_2 \right.
			\\	&  \hspace{1cm} 	
			\left. 	- \exp(a_1x_2) \left(\gamma_0\frac{\partial s}{\partial a_1} + \gamma_1(t+s)\frac{\partial s}{\partial a_1} + \gamma_2(t+s)^2\frac{\partial s}{\partial a_1}\right)\right].\\
		\end{aligned}
	\end{equation*}
	Upon substituting the formulas of the shifting time derivatives in the above expressions, we can rewrite the above derivatives as follows:
	\begin{equation*}
		\begin{aligned}
			\frac{\partial  R_{\boldsymbol{\theta}}(t,x_2)}{\partial \gamma_0} &= -	R_{\boldsymbol{\theta}}(t,x_2) 
			\exp(a_1x_2) \left( (t+s)+  (\tau+s)\frac{k_1(t+s) }{k_1(\tau+s)}\left(\frac{k_2(\tau+s)}{k_2(\tau)}-1\right)
			\right),\\
			\frac{\partial R_{\boldsymbol{\theta}}(t,x_2)}{\partial \gamma_1} &= -R_{\boldsymbol{\theta}}(t,x_2)
			\exp(a_1x_2) \left(\frac{(t+s)^2}{2}+\frac{(\tau+s)}{2}\frac{ k_1(t+s)}{k_1(\tau+s)}\left(\tau\frac{k_2(\tau+s)}{k_2(\tau)}-(\tau+s)\right)          \right),\\
			\frac{\partial R_{\boldsymbol{\theta}}(t,x_2)}{\partial \gamma_1} &= -R_{\boldsymbol{\theta}}(t,x_2)
			\exp(a_1x_2) \left(\frac{(t+s)^3}{3}+  \frac{(\tau+s)}{3}\frac{k_1(t+s)}{k_1(\tau+s)}\left(\tau^2\frac{k_2(\tau+s)}{k_2(\tau)}-(\tau+s)^2\right)        \right),\\
			%
			%
			\frac{\partial R_{\boldsymbol{\theta}}(t,x_2)}{\partial a_1} &=  - R_{\boldsymbol{\theta}}(t,x_2) \exp(a_1x_2) \left[(t+s)k_2(t+s) x_2 +
			  \frac{k_1(t+s)}{k_1(\tau+s)}(\tau+s)k_2(\tau+s)(x_1-x_2) \right].\
		\end{aligned}
	\end{equation*}
	
	Now, defining  $$\boldsymbol{w}_j = \begin{cases}
		\frac{\partial R_{\boldsymbol{\theta}}(t_{j-1},x_1)}{\partial \boldsymbol{\theta}} - \frac{\partial R_{\boldsymbol{\theta}}(t_{j},x_1)}{\partial \boldsymbol{\theta}}, & t_{j-1} < \tau, \\
		\frac{\partial R_{\boldsymbol{\theta}}(t_{j-1},x_2)}{\partial \boldsymbol{\theta}} - \frac{\partial R_{\boldsymbol{\theta}}(t_{j},x_2)}{\partial \boldsymbol{\theta}}, & \tau \leq t_{j-1},
	\end{cases}$$
	we obtain the  required formula.		
\end{proof}

	\section{Calculations for the lifetime characteristics under linear baseline hazard }
	
	\subsection{Computation of the mean lifetime and its derivatives \label{app:meanlinear}}
	
	Let us consider a linear baseline hazard function of the form in (\ref{eq:baselinehazardlinear}) and a constant stress $x_0.$ Then, the cumulative distribution function of the products in this case is 
	\begin{equation*} 
		F_T(t) = 	1- \exp\left[-\exp\left( a_1 x_0\right) \left( \gamma_0t + \gamma_1 \frac{t^2}{2}\right)\right], \hspace{0.3cm} 0 < t < \infty,
	\end{equation*}
	and the corresponding distribution density function is
		\begin{equation*} 
		f_T(t) =  -\exp\left( a_1 x_0\right)(\gamma_0+\gamma_1t)\exp\left[\exp \left(a_0 + a_1 x_0\right) \left( \gamma_0t + \gamma_1 \frac{t^2}{2}\right)\right], \hspace{0.3cm} 0 < t < \infty,
	\end{equation*}
	where $\exp\left( a_1 x_0\right)$ represents the multiplicative effect of the stress level.
	Then, the mean lifetime of the product can be obtained as follows:
	\begin{equation}
		\begin{aligned}
			E_{\boldsymbol{\theta}}[T] 
			&= \int_0^\infty t f_T(t) dt\\
			 &= 	\int_0^\infty  \exp\left( a_1 x_0\right)(\gamma_0t+\gamma_1t^2)\exp\left[-\exp \left[a_0 + a_1 x_0\right] \left( \gamma_0t + \gamma_1 \frac{t^2}{2}\right)\right] dt \\
			&= 	\int_0^\infty \exp\left[-\exp\left( a_1 x_0\right) \left( \gamma_0t + \gamma_1 \frac{t^2}{2}\right)\right] dt \\
			&=  \int_0^\infty \exp\left[-\frac{\gamma_1}{2}\exp\left( a_1 x_0\right) \left( 2\frac{\gamma_0}{\gamma_1}t +  t^2\right)\right] dt \\
			&= \exp\left(\frac{\gamma_1}{2}\exp(a_1x_0)\frac{\gamma_0^2}{\gamma_1^2}\right) \int_0^\infty \exp\left[-\frac{\gamma_1}{2}\exp\left( a_1 x_0\right) \left( t+ \frac{\gamma_0}{\gamma_1}\right)^2\right] dt \\
			&= \exp\left(\exp(a_1x_0)\frac{\gamma_0^2}{2\gamma_1}\right)\sqrt{\frac{2\pi}{\gamma_1 \exp(a_1x_0)}} \\
			& \hspace{0.5cm} \times \int_0^\infty  \sqrt{\frac{\gamma_1 \exp(a_1x_0)}{2\pi}} \exp\left[-\frac{\gamma_1}{2}\exp\left( a_1 x_0\right) \left( t+ \frac{\gamma_0}{\gamma_1}\right)^2\right] dt \\
			&= \exp\left(\exp(a_1x_0)\frac{\gamma_0^2}{2\gamma_1}\right)\sqrt{\frac{2\pi}{\gamma_1 \exp(a_1x_0)}} \\
			& \hspace{0.5cm} \times \int_{\frac{\gamma_0}{\gamma_1}\sqrt{\gamma_1 \exp(a_1x_0)}}^\infty  \frac{1}{\sqrt{2\pi}} \exp\left(-\frac{1}{2}  u^2 \right) du \\
			&= \exp\left[\exp(a_1x_0)\frac{\gamma_0^2}{2\gamma_1}\right]\sqrt{\frac{2\pi}{\gamma_1 \exp(a_1x_0)}} \Phi\left(-\frac{\gamma_0}{\sqrt{\gamma_1}}, \exp\left(\frac{a_1x_0}{2}\right)\right),
		\end{aligned}
	\end{equation}
	where $\Phi$ denotes the cumulative distribution function of a standard normal variable.
	
	Now, the derivatives of  $E_{\boldsymbol{\theta}}[T]$ with respect to every parameter are as follows:
	\begin{equation}
		\begin{aligned}
			\frac{\partial E_{\boldsymbol{\theta}}[T] }{\partial \gamma_0} &=  \sqrt{\frac{2\pi}{\gamma_1 \exp(a_1x_0)}} \left[\exp\left(\frac{\exp(a_1x_0)\gamma_0^2}{2\gamma_1}\right) \frac{\exp(a_1x_0)\gamma_0}{\gamma_1} \Phi\left(-\frac{\gamma_0}{\sqrt{\gamma_1}}\exp\left(\frac{a_1x_0}{2}\right)\right) \right.\\
			\hspace{0.5cm} & 
			\left. - \exp\left(\frac{\exp(a_1x_0)\gamma_0^2}{2\gamma_1}\right) \phi\left(-\frac{\gamma_0}{\sqrt{\gamma_1}} \exp\left(\frac{a_1x_0}{2}\right)\right) \frac{\exp(a_1x_0/2)}{\sqrt{\gamma_1}}
			 \right] \\
			&= \frac{\gamma_0}{\gamma_1} E_{\boldsymbol{\theta}}[T] \exp(a_1x_0) - \frac{\sqrt{2\pi}}{\gamma_1 } \exp\left[\frac{\exp(a_1x_0)\gamma_0^2}{2\gamma_1}\right] \phi\left(-\frac{\gamma_0}{\sqrt{\gamma_1}} \exp\left(\frac{a_1x_0}{2}\right)\right),\\
			\frac{\partial E_{\boldsymbol{\theta}}[T] }{\partial \gamma_1}& =  \sqrt{\frac{2\pi}{ \exp(a_1x_0)}}\exp\left(\frac{\exp(a_1x_0)\gamma_0^2}{2\gamma_1}\right) \left[-\frac{1}{2}\gamma_1^{-3/2} - \frac{1}{\sqrt{\gamma_1}}\frac{\exp(a_1x_0)\gamma_0^2}{2\gamma_1^2}\right]\Phi\left(-\frac{\gamma_0}{\sqrt{\gamma_1}}\sqrt{ \exp(a_1x_0)}\right) \\
			\hspace{0.5cm} & +   \exp\left(\exp(a_1x_0)\frac{\gamma_0^2}{2\gamma_1}\right)\sqrt{\frac{2\pi}{\gamma_1 \exp(a_1x_0)}} \phi\left(-\frac{\gamma_0}{\sqrt{\gamma_1}}\sqrt{ \exp(a_1x_0)}\right)  \frac{\gamma_0}{2\gamma_1^{3/2}}\sqrt{ \exp(a_1x_0)} \\
			&= \frac{1}{2\gamma_1}E_{\boldsymbol{\theta}}[T]\left[-1 - \exp(a_1x_0)\frac{\gamma_0^2}{\gamma_1}\right] + 
			\exp\left(\exp(a_1x_0)\frac{\gamma_0^2}{2\gamma_1}\right) \phi\left(-\frac{\gamma_0}{\sqrt{\gamma_1}}\sqrt{ \exp(a_1x_0)}\right)  \frac{\sqrt{2\pi} \gamma_0}{2\gamma_1^2 },\\
			%
			%
			\frac{\partial E_{\boldsymbol{\theta}}[T] }{\partial a_1} &= \frac{x_0}{2} \exp\left(\frac{\exp(a_1x_0)\gamma_0^2}{2\gamma_1}\right)\sqrt{\frac{2\pi}{\gamma_1}}\left[\frac{-1}{2\sqrt{\exp(a_1x_0)}} + \frac{\sqrt{\exp(a_1x_0)}\gamma_0^2}{2\gamma_1} \right]\Phi\left(-\frac{\gamma_0}{\sqrt{\gamma_1}}\sqrt{ \exp(a_1x_0)}\right)\\
			&\hspace{0.7cm}  + \exp\left(\exp(a_1x_0)\frac{\gamma_0^2}{2\gamma_1}\right)\sqrt{\frac{2\pi}{\gamma_1 \exp(a_1x_0)}} \phi\left(-\frac{\gamma_0}{\sqrt{\gamma_1}} \exp\left(\frac{a_1x_0}{2}\right) \right) \left(-\frac{\gamma_0}{\sqrt{\gamma_1}} \exp\left(\frac{a_1x_0}{2}\right) \frac{x_0}{2}\right)	\\
			&= E_{\boldsymbol{\theta}}[T]\Phi\left(-\frac{\gamma_0}{\sqrt{\gamma_1}}\sqrt{ \exp(a_1x_0)}\right)\left[ -1 + \frac{\gamma_0^2}{\gamma_1}\exp(a_1x_0)\right]\frac{x_0}{2}\\
			 & \hspace{0.7cm} - \exp\left(\exp(a_1x_0)\frac{\gamma_0^2}{2\gamma_1}\right) \phi\left(-\frac{\gamma_0}{\sqrt{\gamma_1}} \exp\left(\frac{a_1x_0}{2}\right) \right) \frac{\sqrt{2\pi}\gamma_0}{\gamma_1}   \frac{x_0}{2} .
		\end{aligned}
	\end{equation}

	\subsection{Computation of the $\alpha-$quantile and its derivatives  \label{app:quantilelinear}}
	
	As  discussed in Section \ref{sec:char}, the $\alpha-$quantile of the distribution should satisfy the second-order equation
	\begin{equation}  \label{eq:appquantileec}
		\frac{\gamma_1 }{2}Q_{\boldsymbol{\theta}}(\alpha)^2 + \gamma_0Q_{\boldsymbol{\theta}}(\alpha) + \log(\alpha)\exp\left( -a_1^\beta x_0\right) = 0. 
	\end{equation}
Now, taking derivatives with respect to every parameter $(\gamma_0, \gamma_1, a_0, a_1),$ we obtain the following system of equations:
	\begin{equation*}
		\begin{aligned}
			\gamma_1 Q_{\boldsymbol{\theta}}(\alpha) \frac{\partial Q_{\boldsymbol{\theta}}(\alpha)}{\partial \gamma_0} +  Q_{\boldsymbol{\theta}}(\alpha) + \gamma_0\frac{\partial Q_{\boldsymbol{\theta}}(\alpha)}{\partial \gamma_0}  &= 0, \\
			\frac{1}{2}  Q_{\boldsymbol{\theta}}(\alpha)^2 + \gamma_1 Q_{\boldsymbol{\theta}}(\alpha) \frac{\partial Q_{\boldsymbol{\theta}}(\alpha)}{\partial \gamma_1}  \gamma_0\frac{\partial Q_{\boldsymbol{\theta}}(\alpha)}{\partial \gamma_1}  &= 0, \\
			\gamma_1 Q_{\boldsymbol{\theta}}(\alpha) \frac{\partial Q_{\boldsymbol{\theta}}(\alpha)}{\partial a_0} + \gamma_0\frac{\partial Q_{\boldsymbol{\theta}}(\alpha)}{\partial a_0} -\log(\alpha)\exp\left( -a_1^\beta x_0\right)  &= 0, \\
			\gamma_1 Q_{\boldsymbol{\theta}}(\alpha) \frac{\partial Q_{\boldsymbol{\theta}}(\alpha)}{\partial a_1} +   \gamma_0\frac{\partial Q_{\boldsymbol{\theta}}(\alpha)}{\partial a_1} - \log(\alpha)\exp\left( -a_1^\beta x_0\right)x_0 &= 0. \\
		\end{aligned}
		\end{equation*}
Upon solving the above equations, we readily obtain the derivatives of the $\alpha$-quantile function as follows:
	\begin{equation*}
	\begin{aligned}
		 \frac{\partial Q_{\boldsymbol{\theta}}(\alpha)}{\partial \gamma_0} &= -  \frac{ Q_{\boldsymbol{\theta}}(\alpha) }{\gamma_1  Q_{\boldsymbol{\theta}}(\alpha)  + \gamma_0}, \\
			 \frac{\partial Q_{\boldsymbol{\theta}}(\alpha)}{\partial \gamma_1} &= -  \frac{ Q_{\boldsymbol{\theta}}(\alpha)^2 }{2\gamma_1  Q_{\boldsymbol{\theta}}(\alpha)  + 2\gamma_0}. \\
		 \frac{\partial Q_{\boldsymbol{\theta}}(\alpha)}{\partial a_0} &=  \frac{ \log(\alpha)\exp\left( -a_1^\beta x_0\right) }{\gamma_1  Q_{\boldsymbol{\theta}}(\alpha)  + \gamma_0}, \\
			 \frac{\partial Q_{\boldsymbol{\theta}}(\alpha)}{\partial a_1} &=   \frac{ \log(\alpha)\exp\left( -a_1^\beta x_0\right) }{\gamma_1  Q_{\boldsymbol{\theta}}(\alpha)  + \gamma_0}x_0. \\
	\end{aligned}
\end{equation*}
Now, by using Eq. (\ref{eq:appquantileec}), we can rewrite $-\log(\alpha)\exp\left( -a_1^\beta x_0\right) = \frac{\gamma_1 }{2}Q_{\boldsymbol{\theta}}(\alpha)^2 + \gamma_0Q_{\boldsymbol{\theta}}(\alpha),$ and thus the expressions presented in Section \ref{sec:char_quantile_linear} are obtained.

\section{Calculations for the lifetime characteristics under quadratic baseline hazard }

\subsection{Computation of the mean lifetime and its derivatives  \label{app:meanquadratic}}

Let us consider the mean lifetime of a device under the quadratic hazard assumption and constant stress level $x_0,$ given by
\begin{equation*}
	E_{\boldsymbol{\theta}}[T] = \exp\left( a_1 x_0\right) \int_{0}^{\infty}  \left( \gamma_0t + \gamma_1 t^2 + \gamma_2 t^3 \right) \exp\left[-\exp\left( a_1 x_0\right) \left( \gamma_0t + \gamma_1 \frac{t^2}{2} + \gamma_2 \frac{t^3}{3}\right)\right] dt.
\end{equation*}
Taking derivatives with respect to every model parameter $\boldsymbol{\theta} = \left( \gamma_0, \gamma_1, \gamma_2, a_0, a_1\right),$ we get

\begin{equation}
	\begin{aligned}
	\frac{\partial E_{\boldsymbol{\theta}}[T] }{\partial \gamma_0} &=  \exp\left( a_1 x_0\right) \int_{0}^{\infty}  t \exp\left[-\exp\left( a_1 x_0\right) \left( \gamma_0t + \gamma_1 \frac{t^2}{2} + \gamma_2 \frac{t^3}{3}\right)\right] dt \\
	& \hspace{0.5cm}-  \exp\left( a_1 x_0\right)^2 \int_{0}^{\infty}  \left( \gamma_0t + \gamma_1 t^2 + \gamma_2 t^3 \right)t \exp\left[-\exp\left( a_1 x_0\right) \left( \gamma_0t + \gamma_1 \frac{t^2}{2} + \gamma_2 \frac{t^3}{3}\right)\right] dt\\
	&= \exp\left( a_1 x_0\right) \int_{0}^{\infty}  tR_{\boldsymbol{\theta}}(t)dt - \int_{0}^{\infty}  t^2h_{\boldsymbol{\theta}}(t)R_{\boldsymbol{\theta}}(t)dt,\\
	\frac{\partial E_{\boldsymbol{\theta}}[T] }{\partial \gamma_1} &=  \exp\left( a_1 x_0\right) \int_{0}^{\infty}  t^2 \exp\left[-\exp\left( a_1 x_0\right) \left( \gamma_0t + \gamma_1 \frac{t^2}{2} + \gamma_2 \frac{t^3}{3}\right)\right]dt \\
	& \hspace{0.5cm} -  \exp\left( a_1 x_0\right)^2 \int_{0}^{\infty}  \left( \gamma_0t + \gamma_1 t^2 + \gamma_2 t^3 \right)\frac{t^2}{2} \exp\left[-\exp\left( a_1 x_0\right) \left( \gamma_0t + \gamma_1 \frac{t^2}{2} + \gamma_2 \frac{t^3}{3}\right)\right] dt\\
	&= \exp\left( a_1 x_0\right) \int_{0}^{\infty}  t^2 R_{\boldsymbol{\theta}}(t)dt - \int_{0}^{\infty}  \frac{t^3}{2}h_{\boldsymbol{\theta}}(t)R_{\boldsymbol{\theta}}(t)dt,\\
	\frac{\partial E_{\boldsymbol{\theta}}[T] }{\partial \gamma_2} &=  \exp\left( a_1 x_0\right) \int_{0}^{\infty}  t^3 \exp\left[-\exp\left( a_1 x_0\right) \left( \gamma_0t + \gamma_1 \frac{t^2}{2} + \gamma_2 \frac{t^3}{3}\right)\right] dt \\
	& \hspace{0.5cm} -  \exp\left( a_1 x_0\right)^2 \int_{0}^{\infty}  \left( \gamma_0t + \gamma_1 t^2 + \gamma_2 t^3 \right)\frac{t^3}{3} \exp\left[-\exp\left( a_1 x_0\right) \left( \gamma_0t + \gamma_1 \frac{t^2}{2} + \gamma_2 \frac{t^3}{3}\right)\right] dt\\
	&= \exp\left( a_1 x_0\right) \int_{0}^{\infty}  t^3 R_{\boldsymbol{\theta}}(t)dt - \int_{0}^{\infty}  \frac{t^4}{3}h_{\boldsymbol{\theta}}(t)R_{\boldsymbol{\theta}}(t)dt\\
	%
	&= (1-\exp\left( a_1 x_0\right)) E_{\boldsymbol{\theta}}[T],\\
	\frac{\partial E_{\boldsymbol{\theta}}[T] }{\partial a_1} &=   (1-\exp\left( a_1 x_0\right)) E_{\boldsymbol{\theta}}[T]  x_0.
	\end{aligned}
\end{equation}

\subsection{Computation of the $\alpha-$quantile and its derivatives  \label{app:quantilequadratic}}

The $\alpha-$quantile of the distribution should satisfy the  third-order equation
\begin{equation*}  
	\frac{\gamma_1 }{3}Q_{\boldsymbol{\theta}}(\alpha)^3 + \frac{\gamma_1 }{2}Q_{\boldsymbol{\theta}}(\alpha)^2 + \gamma_0Q_{\boldsymbol{\theta}}(\alpha) + \log(\alpha)\exp\left( -a_1^\beta x_0\right) = 0. 
\end{equation*}
Now, taking derivatives with respect every parameter $(\gamma_0, \gamma_1, \gamma_2, a_0, a_1),$ we obtain the following system of equations:
\begin{equation*}
	\begin{aligned}
		\gamma_2 Q_{\boldsymbol{\theta}}(\alpha)^2 \frac{\partial Q_{\boldsymbol{\theta}}(\alpha)}{\partial \gamma_0} + \gamma_1 Q_{\boldsymbol{\theta}}(\alpha) \frac{\partial Q_{\boldsymbol{\theta}}(\alpha)}{\partial \gamma_0} +  Q_{\boldsymbol{\theta}}(\alpha) + \gamma_0\frac{\partial Q_{\boldsymbol{\theta}}(\alpha)}{\partial \gamma_0}  &= 0, \\
		\gamma_2 Q_{\boldsymbol{\theta}}(\alpha)^2 \frac{\partial Q_{\boldsymbol{\theta}}(\alpha)}{\partial \gamma_1} +\frac{1}{2}  Q_{\boldsymbol{\theta}}(\alpha)^2 + \gamma_1 Q_{\boldsymbol{\theta}}(\alpha) \frac{\partial Q_{\boldsymbol{\theta}}(\alpha)}{\partial \gamma_1}  \gamma_0\frac{\partial Q_{\boldsymbol{\theta}}(\alpha)}{\partial \gamma_1}  &= 0, \\
		\frac{1}{3}  Q_{\boldsymbol{\theta}}(\alpha)^3  + \gamma_2 Q_{\boldsymbol{\theta}}(\alpha)^2 \frac{\partial Q_{\boldsymbol{\theta}}(\alpha)}{\partial \gamma_2} + \gamma_1 Q_{\boldsymbol{\theta}}(\alpha) \frac{\partial Q_{\boldsymbol{\theta}}(\alpha)}{\partial \gamma_2}  \gamma_0\frac{\partial Q_{\boldsymbol{\theta}}(\alpha)}{\partial \gamma_2}  &= 0, \\
		%
		%
		\gamma_2 Q_{\boldsymbol{\theta}}(\alpha)^2 \frac{\partial Q_{\boldsymbol{\theta}}(\alpha)}{\partial a_1} +\gamma_1 Q_{\boldsymbol{\theta}}(\alpha) \frac{\partial Q_{\boldsymbol{\theta}}(\alpha)}{\partial a_1} +   \gamma_0\frac{\partial Q_{\boldsymbol{\theta}}(\alpha)}{\partial a_1} - \log(\alpha)\exp\left( -a_1^\beta x_0\right)x_0 &= 0, \\
	\end{aligned}
\end{equation*}
Upon solving the above equations, we obtain the derivatives of the $\alpha$-quantile function as follows:
\begin{equation*}
	\begin{aligned}
		\frac{\partial Q_{\boldsymbol{\theta}}(\alpha)}{\partial \gamma_0} &= -  \frac{ Q_{\boldsymbol{\theta}}(\alpha) }{\gamma_2 Q_{\boldsymbol{\theta}}(\alpha)^2+ \gamma_1  Q_{\boldsymbol{\theta}}(\alpha)  + \gamma_0}, \\
		\frac{\partial Q_{\boldsymbol{\theta}}(\alpha)}{\partial \gamma_1} &= - \frac{1}{2} \frac{ Q_{\boldsymbol{\theta}}(\alpha)^2 }{\gamma_2 Q_{\boldsymbol{\theta}}(\alpha)^2+ \gamma_1  Q_{\boldsymbol{\theta}}(\alpha)  + \gamma_0}, \\
		\frac{\partial Q_{\boldsymbol{\theta}}(\alpha)}{\partial \gamma_2} &= - \frac{1}{3} \frac{ Q_{\boldsymbol{\theta}}(\alpha)^3 }{\gamma_2 Q_{\boldsymbol{\theta}}(\alpha)^2+ \gamma_1  Q_{\boldsymbol{\theta}}(\alpha)  + \gamma_0}, \\
		%
		%
		\frac{\partial Q_{\boldsymbol{\theta}}(\alpha)}{\partial a_1} &=   \frac{ \log(\alpha)\exp\left( -a_1^\beta x_0\right) }{\gamma_2 Q_{\boldsymbol{\theta}}(\alpha)^2+ \gamma_1  Q_{\boldsymbol{\theta}}(\alpha)  + \gamma_0}x_0. \\
	\end{aligned}
\end{equation*}
Now, by using Eq. (\ref{eq:appquantileec}), we can rewrite $-\log(\alpha)\exp\left( -a_1^\beta x_0\right) = \frac{\gamma_2 }{3}Q_{\boldsymbol{\theta}}(\alpha)^3+ \frac{\gamma_1 }{2}Q_{\boldsymbol{\theta}}(\alpha)^2 + \gamma_0Q_{\boldsymbol{\theta}}(\alpha),$ and thus expressions presented in Section \ref{sec:char_quantile_quadratic} are obtained.
\end{document}